\newcommand{\bl}{\mathbf{l}}
\newcommand{\ba}{\mathbf{a}}
\newcommand{\bb}{\mathbf{b}}
\newcommand{\bu}{\mathbf{u}}
\newcommand{\bv}{\mathbf{v}}
\newcommand{\by}{\mathbf{y}}
\newcommand{\bt}{\mathbf{t}}
\newcommand{\bz}{\mathbf{z}}
\newcommand{\bx}{\mathbf{x}}
\newcommand{\bw}{\mathbf{w}}
\newcommand{\br}{\mathbf{r}}
\newcommand{\bnull}{\mathbf{0}}
\newcommand{\bTheta}{\mathbf{\Theta}}
\newcommand{\bPhi}{\mathbf{\Phi}}
\newcommand{\bR}{\mathbf{R}}
\newcommand{\bA}{\mathbf{A}}
\newcommand{\bT}{\mathbf{T}}
\newcommand{\bM}{\mathbf{M}}
\newcommand{\bH}{\mathbf{H}}
\newcommand{\bOmega}{\mathbf{\Omega}}
\newcommand{\bGamma}{\mathbf{\Gamma}}
\newcommand{\bQ}{\mathbf{Q}}
\newcommand{\bP}{\mathbf{P}}
\newcommand{\bU}{\mathbf{U}}
\newcommand{\bL}{\mathbf{L}}
\newcommand{\bV}{\mathbf{V}}
\newcommand{\bW}{\mathbf{W}}
\newcommand{\bc}{\mathbf{c}}
\newcommand{\keywords}[1]{{\textit{Keywords---}} #1}
\newtheorem{lemma}{Lemma}[section]
\newtheorem{proposition}[lemma]{Proposition}
\newtheorem{corollary}[lemma]{Corollary}
\newtheorem{remark}[lemma]{Remark}
\newtheorem{definition}[lemma]{Definition}
\begin{document}

	\title{Randomized linear algebra for model reduction. \\
		Part II: minimal residual methods and dictionary-based approximation.
	}
	\author{ 
		Oleg Balabanov\footnotemark[1]~\footnotemark[2] ~and Anthony Nouy\footnotemark[1]~\footnotemark[3]
	}
	
	\renewcommand{\thefootnote}{\fnsymbol{footnote}}
	\footnotetext[1]{Centrale Nantes, LMJL, UMR CNRS 6629, France.}
	\footnotetext[2]{Polytechnic University of Catalonia, LaC\`an, Spain.}
	\footnotetext[3]{Corresponding author (anthony.nouy@ec-nantes.fr).}
	\renewcommand{\thefootnote}{\arabic{footnote}}
	
	\date{}
	\maketitle

\begin{abstract}
	A methodology for using random sketching in the context of model order reduction for high-dimensional parameter-dependent systems of equations was introduced in [Balabanov and Nouy 2019, Part I]. Following this framework, we here construct a reduced model from a small, efficiently computable random object called a sketch of a reduced model, using minimal residual methods. We introduce a sketched version of the minimal residual {based} projection as well as a novel nonlinear approximation method, where for each parameter value, the solution is approximated by minimal residual projection onto a subspace spanned by several vectors picked {(online)} from a dictionary of candidate basis vectors. It is shown that random sketching technique can improve not only efficiency but also numerical stability. A rigorous analysis of the conditions on the random sketch required to obtain a given accuracy is presented. These conditions may be ensured a priori with high probability by considering for the sketching matrix an oblivious embedding of sufficiently large size.  Furthermore, a simple and reliable procedure for a posteriori verification of the quality of the sketch is provided. This approach can be used for certification of the approximation as well as for adaptive selection of the size of the random sketching matrix.

\keywords{model {order} reduction, reduced basis, random sketching, subspace embedding,  minimal residual methods, sparse approximation, dictionary}
\end{abstract}
 
\section{Introduction}
We consider large parameter-dependent systems of equations
\begin{equation} \label{eq:initialproblem}
\bA(\mu)\bu(\mu)= \bb(\mu),~\mu \in \mathcal{P},
\end{equation}
where $\bu(\mu)$ is a solution vector, $\bA(\mu)$ is a parameter-dependent {matrix}, $\bb(\mu)$ is a parameter-dependent right hand side and $\mathcal{P}$ is a parameter set. 
Parameter-dependent problems are considered for many purposes such as {design,} control, optimization, uncertainty quantification or inverse problems.

{Solving~\textup {(\ref {eq:initialproblem})} for many parameter values can be computationally {unfeasible}. Moreover, for real-time applications, a quantity of interest ($\bu(\mu)$ or a function of $\bu(\mu)$) has to be estimated on the fly in highly limited computational time for a certain value of $\mu$. Model order reduction (MOR) methods are developed for efficient approximation of the quantity of interest for each parameter value. They typically consist of two stages.} In the first so-called offline stage a reduced model is constructed from the full order model. This stage usually involves expensive computations such as evaluations of $\bu(\mu)$ for several parameter values, computing multiple high-dimensional matrix-vector and inner products, etc., but this stage is performed only once. Then, for each given parameter value, the precomputed reduced model is used for efficient approximation of the solution or an output quantity with a computational cost independent of the dimension of the initial system of equations~\textup {(\ref {eq:initialproblem})}. For {a detailed presentation of the classical MOR methods} such as Reduced Basis (RB) method and Proper Orthogonal Decomposition (POD) the reader can refer to~\cite{morbook2017}. In the present work the approximation of the solution shall be obtained with a minimal residual (minres) projection on a reduced (possibly parameter-dependent) subspace.  The minres projection can be interpreted as a Petrov-Galerkin projection where the test space is chosen to minimize some norm of the residual~\cite{bui2008model,amsallem2013}. Major benefits over the classical Galerkin projection include an improved stability (quasi-optimality) for non-coercive problems and more effective residual-based error bounds of an approximation (see e.g.~\cite{bui2008model}). In addition, minres methods are better suited to random sketching as will be seen in the present article.

In recent years randomized linear algebra (RLA) became a popular approach in the fields such as data analysis, machine learning, compressed sensing, etc.~\cite{woodruff2014sketching,mahoney2011randomized,tropp2007signal}. This {probabilistic approach for numerical linear algebra} can yield a drastic computational cost reduction in terms of classical metrics of efficiency such as complexity (number of flops) and memory consumption. Moreover, it can be highly beneficial in extreme computational environments that are typical in contemporary scientific computing. For instance, RLA can be essential when data has to be analyzed only in one pass (e.g., when it is streamed from a server) or when it is distributed on multiple workstations with expensive communication costs.  

Despite their indisputable success in fields closely related to MOR, the aforementioned techniques only recently started {to be {extensively} used in MOR community.} One of the earliest works considering RLA in the context of MOR is~\cite{zahm2016interpolation}, where the authors proposed to use RLA for interpolation of (implicit) inverse of a parameter-dependent matrix. In~\cite{buhr2017randomized} the RLA was used for approximating the range of a transfer operator and for computing a probabilistic bound for {the} approximation error.  {In~\cite{cao2004,homescu2005error,smetana2018randomized} the authors developed probabilistic error estimators based on random projections and the adjoint method. These approaches can also be formulated in RLA framework. {A randomized singular value decomposition (see~\cite{halko2011finding,woodruff2014sketching}) was used for computing the POD vectors in~\cite{hochman2014reduced}.} Efficient algorithms for Dynamic Mode Decomposition based on RLA and compressed sensing were proposed in~\cite{erichson2016randomized,bistrian2017randomized,erichson2017randomized,kramer2017sparse,brunton2015}, though with a lack of theoretical analysis.
	
	{As already shown in~\cite{balabanov2019galerkin}, random sketching can lead to  drastic reduction of the computational costs of classical MOR methods.} A random sketch of a reduced model is defined as a set of small random projections of the reduced basis vectors and the associated residuals. Its representation (i.e, affine decomposition\footnote{A parameter-dependent quantity $\bv(\mu)$ with values in a vector space $V$ over $\mathbb{K}$ is said to admit an affine representation if $\bv(\mu) = \sum \bv_i \lambda_i(\mu)$ with $\lambda_i(\mu) \in \mathbb{K}$ and $\bv_i \in V$.}) {can be efficiently precomputed} in basically any computational architecture. The random projections should be chosen according to the metric of efficiency, e.g., number of flops, memory consumption, communication cost between distributed machines, scalability, etc.  {A rigorous analysis of the cost of obtaining a random sketch in different computational environments can be found in~\cite[Section 4.4]{balabanov2019galerkin}. When a sketch has been computed, the reduced model can be approximated without operating on large vectors but only on their small sketches typically requiring  negligible computational costs. The approximation can be guaranteed to almost preserve the quality of the original reduced model with user-specified probability. The computational cost depends only logarithmically on the probability of failure, which can therefore be chosen very small, say $10^{-10}$.  In~\cite{balabanov2019galerkin} it was shown how random sketching can be employed for an efficient approximation of the Galerkin projection, the computation of the norm of the residual for error estimation, and the computation of {a} primal-dual correction. Furthermore, new efficient sketched versions of {the weak} greedy algorithm and Proper Orthogonal Decomposition were introduced for generation of reduced bases. 
		
		{The present work is a continuation of~\cite{balabanov2019galerkin}. Here we adapt the random sketching technique to minimal residual methods, propose a novel dictionary-based approximation method and additionally discuss the questions of a posteriori certification of the sketch and efficient extraction of the output quantity from a solution of the reduced model.
		}

		\subsection{Contributions and outline} \label{contr}
		
		First is proposed a method to approximate minres projection from a random sketch whose precomputation presents a much lower complexity, memory consumption and communication cost~(see~\cite[Section 4.4]{balabanov2019galerkin} for details) than precomputation of affine decomposition of the reduced normal system of equations in the classical offline stage. The sketched minres projection is obtained online by solving a small least-squares problem, that can be more online-efficient and stable than solving the normal system of equations. Precise conditions on the sketch to yield {(approximate) preservation} of quasi-optimality constants of the standard minres projection are provided. The conditions do not depend on operator's properties, which implies robustness for ill-conditioned and non-coercive problems in contrast to the sketched Galerkin methods~\cite{balabanov2019galerkin}. 
		
		In~Section\nobreakspace \ref {dbminres} we introduce a novel nonlinear method, where the solution to~\textup {(\ref {eq:initialproblem})} is approximated by minres projection onto a subspace with basis picked online from a dictionary of candidate basis vectors. {In Section\nobreakspace \ref {dictapprox} is shown why the dictionary-based approach is a more natural choice than the  $hp$-refinement method~\cite{eftang2010hp,eftang2011hp} or other partitioning methods, for problems where the solution is a superposition of several components (e.g., for PDEs with multiple transport phenomena).}  The dictionary-based approximation can be efficiently obtained online by solving a small sparse least-squares problem assembled from a random sketch of dictionary vectors, which entails practical feasibility of the method. 
		
		The potential of approximation with dictionaries for problems with a slow decay of the Kolmogorov $r$-widths of the solution manifold was revealed in~\cite{dihlmann2012online,kaulmann2013online}.  Although they improved classical approaches, the algorithms proposed in~\cite{dihlmann2012online,kaulmann2013online} still involve in general heavy computations in both offline and online stages, and can suffer from round-off errors. More specifically, the offline complexity and memory consumption associated with post-processing of the snapshots  in~\cite{dihlmann2012online,kaulmann2013online} are at least  $\mathcal{O}(nK^2)$  and $\mathcal{O}(nK)$, respectively, where $K$ is the cardinality of the dictionary and $n$ is the dimension of the original problem~\textup {(\ref {eq:initialproblem})}. With random sketching these costs may be reduced (by at least) a factor of $\mathcal{O}(K/\log{K})$. Furthermore, the online complexity and memory consumption of the approach in~\cite{kaulmann2013online} are at least $\mathcal{O}(Kr^3)$ and $\mathcal{O}(K^2)$, respectively, where $r$ is the dimension of the reduced approximation space. In its turn, our approach consumes (at least) $\mathcal{O}(r/\log{K})$ and $\mathcal{O}(K/(r \log{K}))$ times less number of flops and amount of storage, respectively.
		
		{We also provide a probabilistic approach for a posteriori certification of the quality of (random) embedding and the associated sketch.  The certification can be performed with a computational cost (much) less than the cost of obtaining the (sketched) reduced model's solution. The proposed procedure can be particularly useful for adaptive selection of the size of a random sketching matrix since the a priori bounds can be pessimistic in practice~\cite{balabanov2019galerkin}.} 	
		
		Finally, in Appendix A we propose an approach, based on random sketching, for efficient extraction of linear/quadratic quantity of interest or primal-dual correction from the reduced model's solution. {It consists in first approximating the  solution with a projection on a new (very small or sparse) reduced basis, that yields an efficiently computable approximation of the output quantity. Then this approximation is improved by a correction  computable from sketches of the reduced bases with a negligible computational cost.} 	
		This approach can be particularly important when considering a large approximation space or dictionary, with distributed/streamed basis vectors.

		{{The outline of the article is as follows. In~Section\nobreakspace \ref {prel} we introduce the problem setting and recall the main ingredients of the framework developed in~\cite{balabanov2019galerkin}. The minimal residual method considering a projection on a single low-dimensional subspace is discussed in~Section\nobreakspace \ref {minres}. We present a standard minres projection in a discrete form followed by its efficient approximation with random sketching. Section\nobreakspace \ref {dbminres} presents a novel dictionary-based minimal residual method using random sketching. A posteriori verification of the quality of a sketch and few scenarios where such a procedure can be used are provided in~Section\nobreakspace \ref {sketchcert}. The methodology is validated numerically on two nontrivial benchmark problems in~Section\nobreakspace \ref {numex}.}} {A discussion on the efficient and stable extraction of the quantity of interest from the reduced model's solution is given in Appendix A.}

		\section{Preliminaries} \label{prel}
		
		Let $\mathbb{K} = \mathbb{R}$ or  $\mathbb{C}$ and let $U:=\mathbb{K}^{n}$ and $U':=\mathbb{K}^{n}$ represent the solution space and its dual, respectively. The solution $\bu(\mu)$ is an element from $U$, $\bA(\mu)$ is a linear operator from $U$ to $U'$, the right hand side $\bb(\mu)$ and the extractor of the quantity of interest $\bl(\mu)$ are elements of $U'$. 
		
		Spaces $U$ and $U'$ are equipped with inner products $\langle \cdot , \cdot  \rangle_U := \langle \bR_U \cdot,  \cdot \rangle$ and $\langle \cdot , \cdot \rangle_{U'}:= \langle  \cdot, \bR_U^{-1}\cdot  \rangle $, where $\langle \cdot, \cdot \rangle$ is the canonical $\ell_2$-inner product on $\mathbb{K}^{n}$ and $\bR_U : U \to U'$ is some symmetric (for $\mathbb{K} = \mathbb{R}$), or Hermitian (for $\mathbb{K} = \mathbb{C}$), positive definite operator. We denote by $\| \cdot \|$ the canonical $\ell_2$-norm on $\mathbb{K}^{n}$. Finally, for a {matrix} $\bM$ we denote by $\bM^\mathrm{H}$ its (Hermitian) transpose. 
		
		\subsection{Random sketching} \label{randsk}
		A framework  for using random sketching (see~\cite{halko2011finding,woodruff2014sketching}) in the context of MOR was introduced in~\cite{balabanov2019galerkin}. The sketching technique is seen as a modification of {the} inner product in a given subspace (or a collection of subspaces). The modified inner product is an estimation of the original one and is much easier and more efficient to operate with. {Next, we briefly recall the basic preliminaries from~\cite{balabanov2019galerkin}.}

		Let $V$ be a subspace of $U$. The dual of $V$ is identified with a subspace $V':=\{ \bR_{U} \bx : \bx \in V \}$ of $U'$. For a matrix $\bTheta \in \mathbb{K}^{k\times n}$ with $k\leq n$ we define the following semi-inner products on $U$:
		\begin{equation} \label{eq:thetadef}
		\langle \cdot, \cdot \rangle^{\bTheta}_{U}:= \langle \bTheta \cdot, \bTheta \cdot \rangle, \textup{ and }
		\langle \cdot, \cdot \rangle^{\bTheta}_{U'} := \langle \bTheta \bR_U^{-1} \cdot,  \bTheta \bR_U^{-1} \cdot \rangle,
		\end{equation}
		and we let $\| \cdot \|^{\bTheta}_{U}$ and $\| \cdot \|^{\bTheta}_{U'}$ denote the associated semi-norms. 
		
		\begin{definition} \label{def:epsilon_embedding}
			A matrix $\bTheta$ is called a $U \to \ell_2$ $\varepsilon$-subspace embedding (or simply an $\varepsilon$-embedding) for $V$, if it satisfies
			\begin{equation} \label{eq:epsilon_embedding}
			\forall \bx, \by \in V, \ \left | \langle \bx, \by \rangle_U - \langle \bx, \by \rangle^{\bTheta}_{U} \right |\leq \varepsilon \| \bx \|_U \| \by \|_U.
			\end{equation}
		\end{definition}
		
		Here $\varepsilon$-embeddings shall be constructed as realizations of random matrices that are built in an oblivious way without any a priori knowledge of $V$. 
		
		\begin{definition} \label{def:oblepsilon_embedding}
			A random matrix $\bTheta$ is called a $( \varepsilon, \delta, d)$ oblivious $U \to \ell_2$ subspace embedding if it is an $\varepsilon$-embedding for an arbitrary $d$-dimensional subspace {$V \subset U$} with probability at least $1-\delta$.
		\end{definition}

		Oblivious $\ell_2 \to \ell_2$ subspace embeddings (defined by~Definition\nobreakspace \ref {def:epsilon_embedding} with $\langle \cdot, \cdot \rangle_{U} := \langle \cdot, \cdot \rangle$) include the rescaled Gaussian distribution, the rescaled Rademacher distribution, the Subsampled Randomized Hadamard Transform (SRHT), the Subsampled Randomized Fourier Transform (SRFT), CountSketch matrix, SRFT combined with sequences of random Givens rotations, and others~\cite{balabanov2019galerkin,halko2011finding,woodruff2014sketching,rokhlin2008fast}. In this work we shall rely on the rescaled Gaussian distribution and SRHT.
		
		An oblivious $U \to \ell_2$ subspace embedding for a general inner product $\langle \cdot, \cdot \rangle_{U}$ can be constructed as 
		\begin{equation}
		\bTheta = \bOmega \bQ, 
		\end{equation}
		where $\bOmega$ is a $\ell_2 \to \ell_2$  subspace embedding and $\bQ \in \mathbb{K}^{s\times n}$ is an easily computable (possibly rectangular) matrix such that $\bQ^{\mathrm{H}} \bQ= \bR_U$ (see~\cite[Remark 2.7]{balabanov2019galerkin}).
		
		It follows that an $U \to \ell_2$ $\varepsilon$-subspace embedding for $V$ can be obtained with high probability as a realization of an oblivious subspace embedding of sufficiently large size. 
		{The number of rows for $\bTheta$ may be selected with the theoretical bounds from~\cite{balabanov2019galerkin}. These bounds, however, can be pessimistic or even impractical (e.g., for adaptive algorithms or POD). In practice, one can consider $\bTheta$ of much smaller sizes and still obtain accurate subspace embeddings. When the conditions on the size of the sketch based on a priori {analysis} are too pessimistic, {one can provide a posteriori {guarantee}.} An easy and robust procedure for a posteriori verification of the quality of $\bTheta$ is provided in~Section\nobreakspace \ref {sketchcert}. The methodology can also be used for deriving a {criterion} for adaptive selection of the size of the random sketching matrix to satisfy the $\varepsilon$-embedding property with high probability.}
		
		\subsection{A sketch of a reduced model}
		Here the output of a reduced order model is efficiently estimated from its random sketch. The $\bTheta$-sketch of a reduced model associated with a subspace $U_r$ is defined as
		\begin{equation} \label{eq:sketchUr}
		\left \{ \left \{ \bTheta \bx, \bTheta \bR_U^{-1} \br(\bx; \mu) \right \}: ~~ \bx \in U_r \right \}, 
		\end{equation}
		where $\br(\bx;\mu):= \bb(\mu)- \bA(\mu)\bx$. Let $\bU_r \in \mathbb{K}^{n \times r}$ be a matrix whose columns form a basis of $U_r$. Then each element of~\textup {(\ref {eq:sketchUr})} can be characterized from the coordinates of $\bx$ associated with $\bU_r$, i.e., a vector $\ba_r \in \mathbb{K}^{r}$ such that $\bx=\bU_r \ba_r$, and the following quantities
		\begin{equation} \label{eq:sketch}
		\bU^{\bTheta}_r:=\bTheta \bU_r,~ \bV^{\bTheta}_r(\mu):=\bTheta \bR_U^{-1}\bA(\mu) \bU_r \textup{ and }~ {\bb^{\bTheta}}(\mu):=\bTheta \bR_U^{-1} \bb(\mu). 
		\end{equation}
		{ Clearly $\bV^{\bTheta}_r(\mu)$ and $\bb^{\bTheta}(\mu)$ have affine expansions containing at most as many terms as the ones of $\bA(\mu)$ and $\bb(\mu)$, respectively. {The} matrix $\bU^{\bTheta}_r$ and the affine expansions of $\bV^{\bTheta}_r(\mu)$ and $\bb^{\bTheta}(\mu)$ are referred to as the $\bTheta$-sketch of $\bU_r$ {(a representation of the  $\bTheta$-sketch of a reduced model associated with $U_r$).}  With a good choice of an oblivious embedding, a $\bTheta$-sketch of $\bU_r$ can be efficiently precomputed in any computational environment (see~\cite{balabanov2019galerkin}). Thereafter, an approximation of a reduced order model can be obtained with a negligible computational cost.  
			
			{Note that in~\cite{balabanov2019galerkin} the affine expansion of {$\bl_r(\mu)^{\mathrm{H}}:=\bU_r^{\mathrm{H}} \bl(\mu)$, where $\bl(\mu) \in U'$ is an extractor of the linear quantity of interest $l(\bu(\mu);\mu) =  \langle \bl(\mu), \bu(\mu) \rangle$}, is also considered as a part of the $\bTheta$-sketch of $\bU_r$ and is assumed to be efficiently computable. Let us address a scenario where the computation of the affine expansion of  $\bl_r(\mu)$ {or its online evaluation} is expensive. This may happen when $\bl_r(\mu)$ has many terms {in the affine expansion} or when one considers a large approximation space (or dictionary) with possibly distributed basis vectors. In such a case, the output quantity can be approximated with an approach detailed in Appendix A. This method proceeds with two steps. First, the solution $\bu_r(\mu)$ is approximated by a projection $\bw_p(\mu)$ on a new basis $\bW_p$, which is cheap to operate with (e.g, it has less columns than $\bU_r$ or the columns are sparse). The affine decomposition of an approximation $l(\bw_p(\mu); \mu)$ of $l(\bu_r(\mu); \mu)$ can now be efficiently precomputed. Then in the second step, the accuracy of $l(\bw_p(\mu); \mu)$ is improved with a random sketching correction:
				\begin{equation*}
				\begin{split}
				l(\bu_r(\mu); \mu) &\approx l(\bw_p(\mu); \mu) + \langle \bR^{-1}_{U} \bl(\mu), \bu_r(\mu) - \bw_p(\mu) \rangle^\bTheta_U,
				\end{split}
				\end{equation*}
				computable from the sketches $\bTheta \bU_r$, $\bTheta \bW_p$ and $\bTheta \bR^{-1}_{U} \bl(\mu)$  with a negligible computational cost. Note that when interpreting random sketching as a Monte Carlo method, the proposed approach can  be linked to a control variate variance reduction method where $\bw_p(\mu)$ plays the role of the control variate for the estimation of $\langle \bl(\mu), \bu_r(\mu) \rangle$. It is also important to note that the presented approach can be employed not only to extraction of a linear quantity of interest but also  extraction of a quadratic quantity of interest or computation of the primal-dual correction.}
			
			\section{Minimal residual projection} \label{minres} 
			In this section we first present the standard minimal residual projection in a form that allows an easy introduction of random sketching. Then we introduce the sketched version of the minimal residual projection and provide conditions to guarantee its {quality}.     
			
			\subsection{Standard minimal residual projection} \label{minresproj} 
			Let $U_r \subset U$ be a subspace of $U$ (typically obtained with a greedy algorithm or approximate POD). The minres approximation $\bu_r(\mu) \in U_r$ of $\bu(\mu)$ can be defined by
			\begin{equation} \label{eq:minresproj}
			\bu_r(\mu) = \arg \min_{\bw \in U_r} \|\br(\bw; \mu)\|_{U'}.
			\end{equation} 
			For linear problems it is equivalently characterized by the following (Petrov-)Galerkin orthogonality condition:
			\begin{equation} \label{eq:minresproj2} 
			\langle \br(\bu_r(\mu);\mu), \bw \rangle=0, ~\forall \bw \in V_r(\mu),  
			\end{equation}
			{where $V_r(\mu) :=\{ \bR^{-1}_U\bA(\mu)\bx : \bx \in U_r \}$.}
			
			If the operator $\bA(\mu)$ is invertible then~\textup {(\ref {eq:minresproj})} is well-posed. In order to characterize the quality of the projection $\bu_r(\mu)$ we define the following parameter-dependent constants 
			\begin{subequations} \label{eq:zetaiota_r}
				\begin{align} 
				&\zeta_{r}(\mu):=  \min_{ \bx  \in \left (\mathrm{span} \{ \bu(\mu) \}+ U_r \right ) \backslash \{ \bnull \}} \frac{\| \bA(\mu) \bx  \|_{U'}}{\| \bx \|_U}, \label{eq:alphar} \\ 
				&\iota_{r}(\mu):=  \max_{ \bx  \in \left (\mathrm{span} \{ \bu(\mu) \}+ U_r \right ) \backslash \{ \bnull \}}  \frac{ \| \bA(\mu) \bx \|_{U'}}{\| \bx \|_U}.\label{eq:betar}
				\end{align} 
			\end{subequations}
			Let $\bP_{W}:U \rightarrow W$ denote the orthogonal projection from $U$ on a subspace $W \subset U$, defined for $\bx \in U$ by 
			\begin{equation*}
			\bP_{W} \bx = \arg\min_{\bw \in W} \| \bx- \bw \|_{U}.
			\end{equation*}
			
			\begin{proposition}  \label{thm:cea}
				If~$\bu_r(\mu)$ satisfies~\textup {(\ref {eq:minresproj})} and $\zeta_{r}(\mu)>0$, then
				\begin{equation} \label{eq:quasi-opt}
				\| \bu(\mu)- \bu_r(\mu) \|_{U} \leq  \frac{\iota_{r}(\mu)}{\zeta_{r}(\mu)} \| \bu(\mu)- \bP_{U_r} \bu(\mu) \|_{U}. 
				\end{equation}
				\begin{proof}
					See Appendix B. 
				\end{proof}
			\end{proposition}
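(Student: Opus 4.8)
The plan is to run the standard Céa-type argument adapted to the minimal residual setting, exploiting the fact that the residual is the image under $\bA(\mu)$ of the error. First I would rewrite the residual: since $\bA(\mu)\bu(\mu) = \bb(\mu)$, for any $\bw \in U_r$ we have $\br(\bw;\mu) = \bb(\mu) - \bA(\mu)\bw = \bA(\mu)\bigl(\bu(\mu) - \bw\bigr)$, hence $\|\br(\bw;\mu)\|_{U'} = \|\bA(\mu)(\bu(\mu)-\bw)\|_{U'}$. Thus the defining minimization property~\textup{(\ref{eq:minresproj})} of $\bu_r(\mu)$ reads $\|\bA(\mu)(\bu(\mu)-\bu_r(\mu))\|_{U'} \leq \|\bA(\mu)(\bu(\mu)-\bw)\|_{U'}$ for every $\bw \in U_r$; in particular one may take $\bw = \bP_{U_r}\bu(\mu)$.

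Next I would observe that both error vectors involved lie in the subspace $\mathrm{span}\{\bu(\mu)\} + U_r$: indeed $\bu(\mu) - \bu_r(\mu) \in \mathrm{span}\{\bu(\mu)\} + U_r$ because $\bu_r(\mu) \in U_r$, and similarly $\bu(\mu) - \bP_{U_r}\bu(\mu) \in \mathrm{span}\{\bu(\mu)\} + U_r$ because $\bP_{U_r}\bu(\mu) \in U_r$. Therefore the constants $\zeta_r(\mu)$ and $\iota_r(\mu)$ from~\textup{(\ref{eq:zetaiota_r})} apply directly to these two vectors (note $\zeta_r(\mu) > 0$ guarantees in particular that $\bA(\mu)$ is injective on this subspace, so $\br(\,\cdot\,;\mu)$ vanishes only at $\bu(\mu)$ and the minimizer is meaningful). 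Applying the lower bound~\textup{(\ref{eq:alphar})} to $\bu(\mu)-\bu_r(\mu)$ and the upper bound~\textup{(\ref{eq:betar})} to $\bu(\mu)-\bP_{U_r}\bu(\mu)$ gives the chain
\begin{equation*}
\zeta_r(\mu)\,\|\bu(\mu)-\bu_r(\mu)\|_U \;\leq\; \|\bA(\mu)(\bu(\mu)-\bu_r(\mu))\|_{U'} \;\leq\; \|\bA(\mu)(\bu(\mu)-\bP_{U_r}\bu(\mu))\|_{U'} \;\leq\; \iota_r(\mu)\,\|\bu(\mu)-\bP_{U_r}\bu(\mu)\|_U,
\end{equation*}
where the middle inequality is the minimization property with $\bw = \bP_{U_r}\bu(\mu)$. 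Dividing through by $\zeta_r(\mu) > 0$ yields~\textup{(\ref{eq:quasi-opt})}.

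There is no real obstacle here; the only points requiring a moment of care are verifying that the two error vectors genuinely belong to $\mathrm{span}\{\bu(\mu)\} + U_r$ so that the definitions of $\zeta_r(\mu)$ and $\iota_r(\mu)$ are applicable, and recording that the hypothesis $\zeta_r(\mu) > 0$ is exactly what makes~\textup{(\ref{eq:minresproj})} well-posed (so that "$\arg\min$" is well defined and the manipulations above are legitimate). The argument is otherwise purely a two-sided norm-equivalence estimate combined with optimality, and does not use any structural property of $\bA(\mu)$ beyond invertibility on the relevant subspace.
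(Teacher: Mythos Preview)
Your proof is correct and is essentially identical to the paper's own argument: the paper gives the one-line chain $\zeta_{r}(\mu)\|\bu(\mu) - \bu_r(\mu) \|_U \leq \|\br(\bu_r(\mu); \mu)\|_{U'} \leq \|\br(\bP_{U_r} \bu(\mu); \mu)\|_{U'} \leq \iota_{r}(\mu)\|\bu(\mu) - \bP_{U_r} \bu(\mu) \|_U$, which is exactly your displayed chain after identifying $\br(\bw;\mu)=\bA(\mu)(\bu(\mu)-\bw)$. Your additional verification that the error vectors lie in $\mathrm{span}\{\bu(\mu)\}+U_r$ is a detail the paper leaves implicit.
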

			
			The constants $\zeta_{r}(\mu)$ and $\iota_{r}(\mu)$ can be bounded by the minimal and maximal singular values of $\bA(\mu)$:
			\begin{subequations} \label{eq:beta}
				\begin{align} 
				\alpha(\mu) &:=  \underset{ \bx  \in U \backslash \{ \bnull \}} \min \frac{\| \bA(\mu) \bx  \|_{U'}}{\| \bx \|_U} \leq \zeta_r(\mu), \\
				\beta(\mu)  &:=  \underset{ \bx  \in U \backslash \{ \bnull \}} \max \frac{\| \bA(\mu) \bx  \|_{U'}}{\| \bx \|_U} \geq \iota_r(\mu).
				\end{align} 
			\end{subequations}
			Bounds of $\alpha(\mu)$ and $\beta(\mu)$ can be obtained theoretically~\cite{haasdonk2017reduced} or numerically with the successive constraint method~\cite{huynh2007successive}. 
			
			For each $\mu$, the vector $\ba_r(\mu) \in \mathbb{K}^{r}$ such that $\bu_r(\mu) = \bU_r \ba_r(\mu)$ satisfies~\textup {(\ref {eq:minresproj2})} can be obtained by solving the following reduced {(normal)} system of equations:
			\begin{equation} \label{eq:reduced_system}
			\bA_r(\mu) \ba_r(\mu) = \bb_r(\mu), 
			\end{equation} 
			where $\bA_r(\mu)= \bU_r^{\mathrm{H}} \bA(\mu)^{\mathrm{H}} \bR_U^{-1} \bA(\mu)\bU_r \in \mathbb{K}^{r\times r}$ and $\bb_r(\mu)= \bU_r^{\mathrm{H}} \bA(\mu)^{\mathrm{H}} \bR_U^{-1} \bb(\mu) \in \mathbb{K}^r$. 
			The numerical stability of~\textup {(\ref {eq:reduced_system})} can be ensured through orthogonalization of $\bU_r$ similarly as for the classical Galerkin projection. Such orthogonalization yields the following bound for the condition number of $\bA_r(\mu)$:
			\begin{equation} \label{eq:condAr}
			\kappa (\bA_r(\mu)):= {\|\bA_r(\mu)\| \| \bA_r(\mu)^{-1} \|}  \leq \left ( \frac{\iota_{r}(\mu)}{\zeta_{r}(\mu)} \right )^2 \leq \left ( \frac{\beta(\mu)}{\alpha(\mu)}  \right )^2.
			\end{equation} 
			This bound can be insufficient for problems with matrix $\bA(\mu)$ having a high or even moderate condition number. 
			
			{The random sketching technique can be used to improve the efficiency and numerical stability of the minimal residual projection, as shown below. }
			
			\subsection{Sketched minimal residual projection} \label{skminres}

			Let $\bTheta \in \mathbb{K}^{k \times n}$ be a certain $U \to \ell_2$ subspace embedding. The sketched minres projection can be defined by~\textup {(\ref {eq:minresproj})} with the dual norm $\| \cdot \|_{U'}$ replaced by its estimation $\| \cdot \|_{U'}^\bTheta$, which results in an approximation
			
			\begin{equation} \label{eq:skminresproj}
			\bu_r(\mu) = \arg \min_{\bw \in U_r} \|\br(\bw; \mu)\|^\bTheta_{U'}.
			\end{equation}
			
			The quasi-optimality of such a projection can be controlled in exactly the same manner as the quasi-optimality of the original minres projection. By defining the constants  
			\begin{subequations}
				\begin{align} 
				&\zeta^\bTheta_{r}(\mu):=  \underset{ \bx  \in \left (\mathrm{span} \{ \bu(\mu) \}+ U_r \right ) \backslash \{ \bnull \}} \min \frac{\| \bA(\mu) \bx  \|^\bTheta_{U'}}{\| \bx \|_U}, \label{eq:skalphar} \\ 
				&\iota^\bTheta_{r}(\mu):=  \underset{ \bx  \in \left (\mathrm{span} \{ \bu(\mu) \}+ U_r \right ) \backslash \{ \bnull \}} \max \frac{ \| \bA(\mu) \bx \|^\bTheta_{U'}}{\| \bx \|_U},\label{eq:skbetar}
				\end{align} 
			\end{subequations}
			we obtain the following result.
			\begin{proposition}  \label{thm:skcea}
				If~$\bu_r(\mu)$ satisfies~\textup {(\ref {eq:skminresproj})} and $\zeta^\bTheta_{r}(\mu)>0$, then
				\begin{equation} \label{eq:skquasi-opt}
				\|\bu(\mu)- \bu_r(\mu) \|_{U} \leq  \frac{\iota^\bTheta_{r}(\mu)}{\zeta^\bTheta_{r}(\mu)} \| \bu(\mu)- \bP_{U_r} \bu(\mu) \|_{U}. 
				\end{equation}
				\begin{proof}
					See Appendix B.
				\end{proof}
			\end{proposition}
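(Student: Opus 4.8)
The plan is to reproduce, almost verbatim, the argument behind Proposition~\ref{thm:cea}, with the dual norm $\|\cdot\|_{U'}$ replaced everywhere by the sketched seminorm $\|\cdot\|_{U'}^\bTheta$. The two ingredients I would use are: (i) the residual identity $\br(\bw;\mu)=\bA(\mu)\big(\bu(\mu)-\bw\big)$, which holds because $\bA(\mu)\bu(\mu)=\bb(\mu)$; and (ii) the observation that for every $\bw\in U_r$ the error $\bu(\mu)-\bw$ belongs to $\mathrm{span}\{\bu(\mu)\}+U_r$, so that the constants $\zeta^\bTheta_r(\mu)$ and $\iota^\bTheta_r(\mu)$ from~\textup{(\ref{eq:skalphar})}--\textup{(\ref{eq:skbetar})} may both be applied to it.

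First I would dispose of the trivial case $\bu(\mu)=\bu_r(\mu)$, for which~\textup{(\ref{eq:skquasi-opt})} is immediate. Otherwise $\bu(\mu)-\bu_r(\mu)$ is a nonzero element of $\mathrm{span}\{\bu(\mu)\}+U_r$, and the definition of $\zeta^\bTheta_r(\mu)$ together with the residual identity gives
\[
\zeta^\bTheta_r(\mu)\,\|\bu(\mu)-\bu_r(\mu)\|_U \;\le\; \big\|\bA(\mu)\big(\bu(\mu)-\bu_r(\mu)\big)\big\|_{U'}^\bTheta \;=\; \|\br(\bu_r(\mu);\mu)\|_{U'}^\bTheta .
\]
Next, since $\bP_{U_r}\bu(\mu)\in U_r$ is admissible in~\textup{(\ref{eq:skminresproj})}, minimality yields
\[
\|\br(\bu_r(\mu);\mu)\|_{U'}^\bTheta \;\le\; \|\br(\bP_{U_r}\bu(\mu);\mu)\|_{U'}^\bTheta \;=\; \big\|\bA(\mu)\big(\bu(\mu)-\bP_{U_r}\bu(\mu)\big)\big\|_{U'}^\bTheta \;\le\; \iota^\bTheta_r(\mu)\,\|\bu(\mu)-\bP_{U_r}\bu(\mu)\|_U,
\]
the last inequality using again that $\bu(\mu)-\bP_{U_r}\bu(\mu)\in\mathrm{span}\{\bu(\mu)\}+U_r$. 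Chaining the two displays and dividing by $\zeta^\bTheta_r(\mu)>0$ produces~\textup{(\ref{eq:skquasi-opt})}.

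I do not anticipate a real obstacle here: the argument uses only homogeneity and nonnegativity of the seminorm $\|\cdot\|_{U'}^\bTheta$ together with the two-sided bounds encoded in $\zeta^\bTheta_r(\mu),\iota^\bTheta_r(\mu)$, and never relies on $\|\cdot\|_{U'}^\bTheta$ being definite. The only point I would flag explicitly is that the hypothesis $\zeta^\bTheta_r(\mu)>0$ already forces $\|\cdot\|_{U'}^\bTheta$ to be a genuine norm on the image $\bA(\mu)\big(\mathrm{span}\{\bu(\mu)\}+U_r\big)$; in particular the minimization~\textup{(\ref{eq:skminresproj})} is then a well-posed least-squares problem, so a minimizer $\bu_r(\mu)$ exists, but the stated bound in any case holds for every minimizer.
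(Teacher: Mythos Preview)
Your proof is correct and is essentially identical to the paper's own argument: the paper's Appendix~B simply says the proof follows that of Proposition~\ref{thm:cea}, which is exactly the chain $\zeta_r\|\bu-\bu_r\|_U\le\|\br(\bu_r;\mu)\|_{U'}\le\|\br(\bP_{U_r}\bu;\mu)\|_{U'}\le\iota_r\|\bu-\bP_{U_r}\bu\|_U$ with $\|\cdot\|_{U'}$ replaced by $\|\cdot\|_{U'}^\bTheta$. Your additional remarks on the trivial case and on well-posedness under $\zeta^\bTheta_r(\mu)>0$ are fine but not needed for the bound itself.
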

			
			{It follows that if $\zeta^\bTheta_{r}(\mu)$ and $\iota^\bTheta_{r}(\mu)$ are almost equal to $\zeta_{r}(\mu)$ and  $\iota_{r}(\mu)$, respectively,} then the quasi-optimality of the original minres projection~\textup {(\ref {eq:minresproj})} shall be almost preserved by its sketched version~\textup {(\ref {eq:skminresproj})}. These properties of $\iota^\bTheta_{r}(\mu)$ and $\zeta^\bTheta_{r}(\mu)$ can be guaranteed under some conditions on $\bTheta$ (see~Proposition\nobreakspace \ref {thm:skminresopt}).
			
			\begin{proposition} \label{thm:skminresopt}
				Define the subspace
				\begin{equation} \label{eq:R_r}
				R_r(U_r; \mu) :=   \mathrm{span} \{ \bR_U^{-1} \br(\bx; \mu) : \bx \in U_r \}.
				\end{equation}
				If $\bTheta$ is a $U \to \ell_2$  $\varepsilon$-subspace embedding for $R_r(U_r; \mu)$, then 
				\begin{equation}\label{eq:skalphabetabounds}
				\sqrt{1-\varepsilon}~\zeta_{r}(\mu) \leq {\zeta^\bTheta_{r}(\mu)} \leq \sqrt{1+\varepsilon}~\zeta_{r}(\mu),\textup{ and } \sqrt{1-\varepsilon}~{\iota_{r}(\mu)} \leq  {\iota^\bTheta_{r}(\mu)} \leq \sqrt{1+\varepsilon}~{\iota_{r}(\mu)}.
				\end{equation}
				\begin{proof}
					See Appendix B. 
				\end{proof}
			\end{proposition}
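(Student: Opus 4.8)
The plan is to reduce the statement to a single application of the $\varepsilon$-embedding inequality~\eqref{eq:epsilon_embedding}, evaluated on the diagonal $\bx=\by$. First I would record the elementary identities
\begin{equation*}
\| \bA(\mu)\bx \|_{U'} = \| \bR_U^{-1}\bA(\mu)\bx \|_{U}, \qquad \| \bA(\mu)\bx \|^\bTheta_{U'} = \| \bR_U^{-1}\bA(\mu)\bx \|^\bTheta_{U} = \| \bTheta\bR_U^{-1}\bA(\mu)\bx \|,
\end{equation*}
valid for every $\bx\in U$; they follow directly from the definitions of $\langle\cdot,\cdot\rangle_{U'}$ and $\langle\cdot,\cdot\rangle^\bTheta_{U'}$ in~\eqref{eq:thetadef} together with the trivial relation $\bA(\mu)\bx = \bR_U(\bR_U^{-1}\bA(\mu)\bx)$. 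Setting $\bz := \bR_U^{-1}\bA(\mu)\bx$, this shows that all four constants $\zeta_r(\mu),\iota_r(\mu),\zeta^\bTheta_r(\mu),\iota^\bTheta_r(\mu)$ are the minima, respectively maxima, of the ratios $\|\bz\|_U/\|\bx\|_U$ and $\|\bz\|^\bTheta_U/\|\bx\|_U$ as $\bx$ ranges over $(\mathrm{span}\{\bu(\mu)\}+U_r)\setminus\{\bnull\}$.

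Second, I would verify the inclusion $\bz = \bR_U^{-1}\bA(\mu)\bx \in R_r(U_r;\mu)$ for every $\bx\in\mathrm{span}\{\bu(\mu)\}+U_r$. Writing $\bx=\alpha\bu(\mu)+\bw$ with $\alpha\in\mathbb{K}$ and $\bw\in U_r$ and using $\bA(\mu)\bu(\mu)=\bb(\mu)$, one obtains $\bz = \alpha\bR_U^{-1}\bb(\mu) - \bR_U^{-1}\br(\bw;\mu)$. Since $\bnull\in U_r$, the term $\bR_U^{-1}\bb(\mu)=\bR_U^{-1}\br(\bnull;\mu)$ already lies in $R_r(U_r;\mu)$, and since $R_r(U_r;\mu)$ is a linear subspace, so does $\bz$. (The same computation in fact gives the equality $\bR_U^{-1}\bA(\mu)(\mathrm{span}\{\bu(\mu)\}+U_r) = R_r(U_r;\mu)$, but only this inclusion is needed.)

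Third, since $\bTheta$ is a $U\to\ell_2$ $\varepsilon$-subspace embedding for $R_r(U_r;\mu)$, applying~\eqref{eq:epsilon_embedding} with $\bx=\by=\bz\in R_r(U_r;\mu)$ gives $|\,\|\bz\|_U^2 - (\|\bz\|^\bTheta_U)^2\,|\le\varepsilon\|\bz\|_U^2$, hence $\sqrt{1-\varepsilon}\,\|\bz\|_U \le \|\bz\|^\bTheta_U \le \sqrt{1+\varepsilon}\,\|\bz\|_U$. Combined with the identities of the first step, this yields, for every $\bx\in(\mathrm{span}\{\bu(\mu)\}+U_r)\setminus\{\bnull\}$,
\begin{equation*}
\sqrt{1-\varepsilon}\;\frac{\|\bA(\mu)\bx\|_{U'}}{\|\bx\|_U} \;\le\; \frac{\|\bA(\mu)\bx\|^\bTheta_{U'}}{\|\bx\|_U} \;\le\; \sqrt{1+\varepsilon}\;\frac{\|\bA(\mu)\bx\|_{U'}}{\|\bx\|_U}.
\end{equation*}
Taking the minimum and then the maximum over such $\bx$, and using the elementary fact that $c_1 f \le g \le c_2 f$ pointwise (with $c_1,c_2\ge 0$) implies $c_1\min f\le\min g\le c_2\min f$ and $c_1\max f\le\max g\le c_2\max f$, gives exactly~\eqref{eq:skalphabetabounds}.

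I do not expect any genuine obstacle here; the only step requiring a moment's attention is the second one, namely checking that it suffices for $\bTheta$ to be accurate on $R_r(U_r;\mu)$ — a space built from residuals of elements of $U_r$ only — even though $\zeta_r(\mu)$ and $\iota_r(\mu)$ are defined over the strictly larger space $\mathrm{span}\{\bu(\mu)\}+U_r$. This is what makes the constant $\varepsilon$ independent of the (unknown) solution $\bu(\mu)$, and it hinges precisely on $\bnull\in U_r$ together with $\bA(\mu)\bu(\mu)=\bb(\mu)$, so that $\bR_U^{-1}\bb(\mu)\in R_r(U_r;\mu)$; the remainder is routine manipulation of the semi-norm definitions.
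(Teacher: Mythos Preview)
Your argument is correct and is precisely the paper's approach: the paper's proof simply asserts $\sqrt{1-\varepsilon}\,\|\bA(\mu)\bx\|_{U'}\le\|\bA(\mu)\bx\|^\bTheta_{U'}\le\sqrt{1+\varepsilon}\,\|\bA(\mu)\bx\|_{U'}$ for all $\bx\in\mathrm{span}\{\bu(\mu)\}+U_r$ and concludes, whereas you additionally spell out the step it leaves implicit, namely that $\bR_U^{-1}\bA(\mu)\bx\in R_r(U_r;\mu)$ for such $\bx$. One harmless slip: since $\bA(\mu)\bw=\bb(\mu)-\br(\bw;\mu)$, your displayed expression for $\bz$ should be $(\alpha+1)\,\bR_U^{-1}\bb(\mu)-\bR_U^{-1}\br(\bw;\mu)$ rather than with coefficient $\alpha$, but this does not affect the membership $\bz\in R_r(U_r;\mu)$.
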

			An embedding $\bTheta$ satisfying an $U \to \ell_2$ $\varepsilon$-subspace embedding property for the subspace $R_r(U_r; \mu)$ defined in~\textup {(\ref {eq:R_r})}, for all $\mu \in \mathcal{P}$ simultaneously, with high probability, may be generated from an oblivious embedding of sufficiently large size. Note that $\dim({R_r(U_r; \mu)}) \leq r+1$. The number of rows $k$ of the oblivious embedding may be selected a priori using the bounds provided in~\cite{balabanov2019galerkin}, along with a union bound for the probability of success or the fact that $ \bigcup_{\mu  \in \mathcal{P}}  R_r(U_r; \mu)$  is contained in a low-dimensional space. Alternatively, a better value for $k$ can be chosen with a posteriori procedure explained in~Section\nobreakspace \ref {sketchcert}. {Note that if~\textup {(\ref {eq:skalphabetabounds})} is satisfied then the quasi-optimality constants of the minres projection are guaranteed to be preserved up to a small factor depending only on the value of $\varepsilon$.  
				Since $\bTheta$ is here constructed in an oblivious way, the accuracy of random sketching for minres projection can be controlled regardless of the properties of $\bA(\mu)$ (e.g., coercivity, condition number, etc.).} Recall that in~\cite{balabanov2019galerkin} it was revealed that the preservation of the quasi-optimality constants of the classical Galerkin projection by its sketched version is sensitive to {the} operator's properties. More specifically, random sketching can worsen quasi-optimality constants dramatically for non-coercive or ill-conditioned problems. {Consequently, the sketched minres projection should be preferred to the sketched Galerkin projection for such problems.}
			
			\begin{remark}
				{Random sketching is not the only way to construct $\bTheta$ which satisfies the condition in~Proposition\nobreakspace \ref {thm:skminresopt} for all $\mu \in \mathcal{P}$.  In particular, the columns for  a sketching matrix could be chosen as a basis of a low-dimensional space (called empirical test space in~\cite{taddei2019}) that approximates the manifold~$R^*_r (U_r) = \{ \bx \in R_r(U_r, \mu): \|\bx\|_{U}=1,~\mu \in \mathcal{P} \}$ as in~\cite{taddei2019,carlberg2013}. Such a basis could be generated with POD or greedy algorithms. }
				
				{There are several advantages of random sketching over the aforementioned approach, that may also hold for nonlinear problems. {First, the (offline) construction of a low-dimensional approximation of $R^*_r (U_r)$ can be very expensive and become a bottleneck of an algorithm.} Random sketching, on the other hand, requires much lower computational cost.
					The second advantage is the oblivious construction of $\bTheta$ without the knowledge of $U_r$, which can be particularly important when $U_r$ is constructed adaptively (e.g., with a greedy algorithm). Finally, with random sketching it can be sufficient to use $\bTheta$ of small size regardless whether $R^*_r (U_r)$ admits a low-dimensional approximation or not. More specifically, for finite $\mathcal{P}$,  the condition in~Proposition\nobreakspace \ref {thm:skminresopt} can be satisfied (for not too small $\varepsilon$, say $\varepsilon = 0.1$) with high probability by using an oblivious embedding with $k = \mathcal{O}(r+\log{(\#\mathcal{P})})$ rows close to the minimal value $k=r$~\cite{balabanov2019galerkin}. Moreover, a similar online complexity can be attained also for infinite $\mathcal{P}$ by using an additional oblivious embedding as is discussed further in the section.}
			\end{remark}

			The vector of coordinates $\ba_r(\mu) \in \mathbb{K}^{r}$ in the basis $\bU_r$ of the sketched projection $\bu_r(\mu)$ defined by~\textup {(\ref {eq:skminresproj})} may be obtained in a classical way, i.e., by considering a parameter-dependent reduced (normal) system of equations similar to~\textup {(\ref {eq:reduced_system})}. As for the classical approach, this may lead to numerical instabilities during either the online evaluation of the reduced system from the affine expansions or its solution. A remedy is to directly consider      
			\begin{equation} \label{eq:skreduced_eq}
			\ba_r(\mu) = \arg \min_{\bx \in \mathbb{K}^{r} } \|\bA(\mu)\bU_r\bx - \bb(\mu)\|^\bTheta_{U'}= \arg \min_{\bx \in \mathbb{K}^{r} } \| \bV_r^\bTheta(\mu) \bx - \bb^\bTheta(\mu)\|.
			\end{equation} 
			Since the sketched matrix $\bV_r^\bTheta(\mu)$ and vector $\bb^\bTheta(\mu)$ are of rather small sizes, the minimization problem~\textup {(\ref {eq:skreduced_eq})} may be efficiently {formed} (from the precomputed affine expansions) and then solved (e.g., using QR factorization or SVD) in the online stage. 
			\begin{proposition} \label{thm:skminresstab}
				If $\bTheta$ is an $\varepsilon$-embedding for $U_r$, and $\bU_r$ is orthogonal with respect  $\langle \cdot, \cdot \rangle^\bTheta_U$ then the condition number of $\bV^\bTheta_r(\mu)$ is bounded by $\sqrt{\frac{1+\varepsilon}{1-\varepsilon}} \frac{\iota^\bTheta_{r}}{\zeta^\bTheta_{r}}$.
				\begin{proof}
					See Appendix B.
				\end{proof}
			\end{proposition}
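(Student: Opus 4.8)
The plan is to bound the largest and smallest singular values of $\bV^\bTheta_r(\mu) = \bTheta \bR_U^{-1} \bA(\mu) \bU_r$ separately, by translating the action of $\bV^\bTheta_r(\mu)$ on a coordinate vector $\bx \in \mathbb{K}^{r}$ into the sketched norms that appear in the definitions of $\zeta^\bTheta_r(\mu)$ and $\iota^\bTheta_r(\mu)$. First I would observe that for any $\bx \in \mathbb{K}^{r}$ one has $\|\bV^\bTheta_r(\mu)\bx\| = \|\bTheta \bR_U^{-1}\bA(\mu)\bU_r\bx\| = \|\bA(\mu)(\bU_r\bx)\|^\bTheta_{U'}$ by the definition of $\|\cdot\|^\bTheta_{U'}$ in~\textup{(\ref{eq:thetadef})}. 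Since $\bU_r\bx \in U_r \subset \mathrm{span}\{\bu(\mu)\}+U_r$, the definitions~\textup{(\ref{eq:skalphar})}--\textup{(\ref{eq:skbetar})} give immediately
\begin{equation*}
\zeta^\bTheta_r(\mu)\,\|\bU_r\bx\|_U \;\leq\; \|\bV^\bTheta_r(\mu)\bx\| \;\leq\; \iota^\bTheta_r(\mu)\,\|\bU_r\bx\|_U .
\end{equation*}

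The second ingredient is to relate $\|\bU_r\bx\|_U$ to the Euclidean norm $\|\bx\|$. Because $\bU_r$ is orthonormal with respect to $\langle\cdot,\cdot\rangle^\bTheta_U$, i.e. $(\bU^\bTheta_r)^{\mathrm H}\bU^\bTheta_r = \bI$, we have $\|\bU_r\bx\|^\bTheta_U = \|\bTheta\bU_r\bx\| = \|\bx\|$. On the other hand, applying Definition~\ref{def:epsilon_embedding} with $\bx=\by=\bU_r\bx$ (valid since $\bTheta$ is an $\varepsilon$-embedding for $U_r$) yields $(1-\varepsilon)\|\bU_r\bx\|_U^2 \leq \|\bU_r\bx\|_U^{\bTheta\,2} \leq (1+\varepsilon)\|\bU_r\bx\|_U^2$, hence
\begin{equation*}
\frac{1}{\sqrt{1+\varepsilon}}\,\|\bx\| \;\leq\; \|\bU_r\bx\|_U \;\leq\; \frac{1}{\sqrt{1-\varepsilon}}\,\|\bx\| .
\end{equation*}

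Combining the two displays gives $\tfrac{\zeta^\bTheta_r(\mu)}{\sqrt{1+\varepsilon}}\|\bx\| \leq \|\bV^\bTheta_r(\mu)\bx\| \leq \tfrac{\iota^\bTheta_r(\mu)}{\sqrt{1-\varepsilon}}\|\bx\|$ for all $\bx$, so that $\sigma_{\min}(\bV^\bTheta_r(\mu)) \geq \zeta^\bTheta_r(\mu)/\sqrt{1+\varepsilon}$ and $\sigma_{\max}(\bV^\bTheta_r(\mu)) \leq \iota^\bTheta_r(\mu)/\sqrt{1-\varepsilon}$; taking the ratio gives the claimed bound $\kappa(\bV^\bTheta_r(\mu)) \leq \sqrt{\tfrac{1+\varepsilon}{1-\varepsilon}}\,\tfrac{\iota^\bTheta_r(\mu)}{\zeta^\bTheta_r(\mu)}$. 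I do not anticipate a genuine obstacle here; the only points requiring care are bookkeeping ones: interpreting ``orthogonal with respect to $\langle\cdot,\cdot\rangle^\bTheta_U$'' as orthonormality of the columns of $\bU^\bTheta_r$ (otherwise the normalization constant must be carried through), and implicitly assuming $\zeta^\bTheta_r(\mu)>0$ (equivalently $\varepsilon<1$ together with $\zeta_r(\mu)>0$ via Proposition~\ref{thm:skminresopt}) so that $\bV^\bTheta_r(\mu)$ has full column rank and $\kappa(\bV^\bTheta_r(\mu))$ is well defined.
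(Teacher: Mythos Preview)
Your proof is correct and follows essentially the same approach as the paper's: both rewrite $\|\bV^\bTheta_r(\mu)\ba\|$ as $\|\bA(\mu)\bU_r\ba\|^\bTheta_{U'}$, use the orthonormality of $\bU_r$ with respect to $\langle\cdot,\cdot\rangle^\bTheta_U$ to identify $\|\ba\|$ with $\|\bU_r\ba\|^\bTheta_U$, invoke the $\varepsilon$-embedding property to pass between $\|\cdot\|^\bTheta_U$ and $\|\cdot\|_U$, and then apply the definitions of $\zeta^\bTheta_r(\mu)$ and $\iota^\bTheta_r(\mu)$. The paper's version is slightly terser (it writes the ratio $\|\bV^\bTheta_r(\mu)\ba\|/\|\ba\| = \|\bA(\mu)\bx\|^\bTheta_{U'}/\|\bx\|^\bTheta_U$ in one line and leaves the final combination implicit), but the ingredients and their order are the same.
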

			It follows from~Proposition\nobreakspace \ref {thm:skminresstab} (along with~Proposition\nobreakspace \ref {thm:skminresopt}) that considering~\textup {(\ref {eq:skreduced_eq})} can provide better numerical stability than solving reduced systems of equations with standard methods.  Furthermore, since affine expansions of $\bV^\bTheta_r(\mu)$ and $\bb^\bTheta(\mu)$ have less terms than affine expansions of $\bA_r(\mu)$ and $\bb_r(\mu)$ in~\textup {(\ref {eq:reduced_system})}, their {online assembling} should also be much more {stable.}
			
			The online efficiency can be further improved with a procedure similar to the one depicted in~\cite[Section 4.5]{balabanov2019galerkin}. {This procedure can be important for problems requiring rather high size of $\bTheta$, which can happen for algorithms where $U_r$ is chosen adaptively and/or for infinite parameter sets.
				Consider the following sketching matrix
				$$\bPhi = \bGamma \bTheta,$$ 
				where $\bGamma \in \mathbb{K}^{k' \times k}$, $k'< k$, is a small $( \varepsilon', \delta', r+1)$ oblivious  $\ell_2 \to \ell_2$ subspace embedding. {The matrix $\bGamma$ can be taken as a Gaussian matrix  with $k' = \mathcal{O}({\varepsilon'}^{-2}(r + \log(1/\delta'))$ rows (in practice, with a small constant)~\cite{balabanov2019galerkin}.}  It follows that, if $\bTheta$ is a $\varepsilon$-embedding for $R_r(U_r;\mu)$ from~Proposition\nobreakspace \ref {thm:skminresopt}, then {for a single $\mu$}, $\bPhi$ is a $\mathcal{O}(\varepsilon)$-embedding for $R_r(U_r;\mu)$ with probability at least $1-\delta'$. Consequently, {for a single $\mu$},  the minimal residual projection can be accurately estimated by 
				\begin{equation} \label{eq:skminresprojeff}
				\bu_r(\mu) = \arg \min_{\bw \in U_r} \|\br(\bw; \mu)\|^\bPhi_{U'},
				\end{equation}
				with probability at least $1-\delta'$.  The probability of success for several $\mu$ can then be guaranteed with a union bound. }
			
			{
				{It follows that} to improve the online efficiency, we can use a fixed $\bTheta$ which is guaranteed to be a $\varepsilon$-embedding for $R_r(U_r;\mu)$ for all $\mu \in \mathcal{P}$ simultaneously, but consider different realizations of a smaller matrix $\bGamma$ for each particular test set $\mathcal{P}_{\mathrm{test}}$ composed of several parameter values. In this way, in the offline stage a $\bTheta$-sketch of $\bU_r$ can be precomputed and maintained for the online computations. Thereafter, for the given test set $\mathcal{P}_{\mathrm{test}}$ (with the corresponding new realization of $\bGamma$) the affine expansions of small matrices $\bV^\bPhi(\mu):= \bGamma \bV^\bTheta(\mu)$ and $\bb^\bPhi(\mu):= \bGamma \bb^\bTheta(\mu)$ can be efficiently precomputed from the $\bTheta$-sketch in the ``intermediate" online stage. And finally, for each $\mu \in \mathcal{P}_{\mathrm{test}}$, the vector of coordinates of $\bu_r(\mu)$ can be obtained by evaluating $\bV^\bPhi(\mu)$ and $\bb^\bPhi(\mu)$ from just precomputed affine expansions, and solving
				\begin{equation} \label{eq:skreduced_eq_eff}
				\ba_r(\mu) = \arg \min_{\bx \in \mathbb{K}^{r} } \| \bV_r^\bPhi(\mu) \bx - \bb^\bPhi(\mu)\|
				\end{equation} 
				with {a} standard method such as QR factorization,  SVD or (less stable) normal equation.}

			\section{Dictionary-based minimal residual method} \label{dbminres}
			Classical RB method becomes ineffective for parameter-dependent problems for which the solution manifold $\mathcal{M}:= \{ \bu(\mu): \mu \in \mathcal{P} \}$ cannot be well approximated by a single low-dimensional subspace, i.e., its {(linear)} Kolmogorov $r$-width {$d_r(\mathcal{M})$} does not decay rapidly. One can extend the classical RB method by considering a reduced subspace $U_r(\mu)$ depending on a parameter $\mu$.  One way to obtain $U_r(\mu)$ is to use a {$hp$-refinement method} as in~\cite{eftang2010hp,eftang2011hp}, which consists in partitioning the parameter set $\mathcal{P}$ into subsets $\{ \mathcal{P}_i \}^M_{i=1}$ and in associating to each subset $\mathcal{P}_i$ a subspace $U^i_r \subset U$ of dimension at most $r$, therefore resulting in $U_r(\mu)=U^{i}_r$ if $\mu \in \mathcal{P}_i$, $1 \leq i \leq M$. More formally, the $hp$-refinement method aims to approximate $\mathcal{M}$ with a library $\mathcal{L}_{r}:= \{U^{i}_r: 1 \leq i \leq M \}$ of low-dimensional subspaces. For efficiency, the number of subspaces in $\mathcal{L}_{r}$ has to be moderate {(no more than $\mathcal{O}(r^\nu)$ for some small $\nu$, say $\nu=2$ or $3$, which should be dictated by the particular computational architecture).} A nonlinear Kolmogorov $r$-width of $\mathcal{M}$ with a library of $M$ subspaces can be defined as in~\cite{temlyakov1998nonlinear} by
			\begin{equation} \label{eq:nonlinearwidth}
			d_r(\mathcal{M}; M) = \inf_{ \# \mathcal{L}_{r} = M } \sup_{\bu \in \mathcal{M}} \min_{W_r \in \mathcal{L}_{r}} \| \bu - \bP_{W_r} \bu\|_U,
			\end{equation}
			where the infimum is taken over all libraries of $M$ subspaces. {The width~\textup {(\ref {eq:nonlinearwidth})} represents the smallest error of approximation of $\mathcal{M}$ attainable with a library of $M$ $r$-dimensional subspaces. In particular, the approximation $\bP_{U_r(\mu)} \bu(\mu)$ over a parameter-dependent subspace $U_r(\mu)$  associated with a partitioning of $\mathcal{P}$ into $M$ subdomains satisfies}
			\begin{equation}  \label{eq:rwidth}
			d_r(\mathcal{M}; M) \leq {\sup_{\mu \in \mathcal{P}}}{\| \bu(\mu) - \bP_{U_r(\mu)} \bu(\mu) \|_U}.
			\end{equation} 
			Therefore, for the $hp$-refinement method to be effective, the solution manifold is required to be well approximable in terms of the measure $d_r(\mathcal{M}; M)$. 
			
			The $hp$-refinement method may present serious drawbacks: it can be highly sensitive to the parametrization, it can require a large number of subdomains in $\mathcal{P}$ (especially for high-dimensional parameter domains) and it can require computing too many solution samples. These drawbacks can be partially reduced by various modifications of the $hp$-refinement method~\cite{maday2013locally,amsallem2016pebl}, but not circumvented. 
			
			We here propose a dictionary-based method, which can be seen as an alternative to a partitioning of $\mathcal{P}$ for {defining} $U_r(\mu)$, and argue why this method is more natural and can be applied to a larger class of problems.

			\subsection{Dictionary-based approximation} \label{dictapprox}
			
			For each value $\mu$ of the parameter, the basis vectors for $U_r(\mu)$ are selected {online} from a certain dictionary $\mathcal{D}_K$ of $K$ candidate vectors in $U$, $K \geq r$. For {efficiency} of the algorithms in the particular computational environment, the value for $K$ has to be chosen as $\mathcal{O}(r^\nu)$ with a small $\nu$ similarly as the number of subdomains $M$ for the $hp$-refinement method. 
			Let $\mathcal{L}_{r}(\mathcal{D}_K)$ denote the library of all subspaces spanned by $r$ vectors from $\mathcal{D}_K$. A dictionary-based $r$-width is defined as
			\begin{equation} \label{eq:drwidth}
			\sigma_r(\mathcal{M}; K) = \inf_{\# \mathcal{D}_K = K} \sup_{\bu \in \mathcal{M}} \min_{W_r \in \mathcal{L}_{r}(\mathcal{D}_K)} \| \bu - \bP_{W_r} \bu\|_U,
			\end{equation} 
			where the infimum is taken over all subsets $\mathcal{D}_K$ of $U$ with cardinality $\#\mathcal{D}_K=K$. {Clearly, the approximation space $U_r(\mu)$ associated with a dictionary with $K$ vectors satisfies
				$$ \sigma_r(\mathcal{M}; K) \leq {\sup_{\mu \in \mathcal{P}}}{\| \bu(\mu) - \bP_{U_r(\mu)} \bu(\mu) \|_U}. $$}								
			A dictionary $\mathcal{D}_K$ can be efficiently constructed offline {from snapshots} with an adaptive greedy procedure (see~Section\nobreakspace \ref {Dgen}). 
			
			{In general, the performance of the method can be characterized through the approximability of the solution manifold $\mathcal{M}$ in terms of the  $r$-width, and quasi-optimality of the considered $U_r(\mu)$ compared to the best approximation. The dictionary-based approximation can be beneficial over the refinement methods in either of these aspects, which is explained below.}   
			
			It can be easily shown that 
			$$\sigma_r(\mathcal{M}; K) \leq d_r(\mathcal{M}; M), \textup{ for } K \geq rM,$$
			{holds for all $\mathcal{M}$.}
			Therefore{,} if a solution manifold can be well approximated with a partitioning of the parameter domain into $M$ subdomains each associated with a subspace of dimension $r$, then it should also be well approximated with a dictionary of size $K=rM$, which {implies a similar computational cost}.  {The converse statement, however, is not true. There exist manifolds $\mathcal{M}$ satisfying\footnote{{For instance, take  $\mathcal{M} = \mathcal{L}_r(\{\bv_1, \hdots, \bv_{K} \}$) with linearly independent vectors $\bv_i \in U$, $1 \leq i \leq K$. Then we clearly have $\sigma_r(\mathcal{M}; K) = d_r(\mathcal{M}; \tbinom{K}{r}) = 0$ and $d_r(\mathcal{M}; \tbinom{K}{r}-1) > 0$. }}}
			$$ {d_r(\mathcal{M}; \tbinom{K}{r})  \leq \sigma_r(\mathcal{M}; K) < {d}_r(\mathcal{M}; \tbinom{K}{r}-1).} $$
			Consequently, to obtain a decay of $d_r(\mathcal{M}; M)$ with $r$ similar to the decay of $\sigma_r(\mathcal{M}; r^\nu)$, {it may} be required to use $M$ which depends exponentially on $r$.

			The great potential of the dictionary-based approximation {can be justified} by important properties of the dictionary-based $r$-width given in~Proposition\nobreakspace \ref {prop:widthsuper} and\nobreakspace Corollary\nobreakspace \ref {prop:corwidthsuper}.
			
			\begin{proposition} \label{prop:widthsuper}
				Let $\mathcal{M}$ be obtained by the superposition of parameter-dependent vectors:	
				\begin{equation} \label{eq:widthsuper_sman}
				{\mathcal{M}} = \{ \sum^l_{i=1} \bu^{(i)}(\mu): \mu \in \mathcal{P}\}, 
				\end{equation} 
				where $\bu^{(i)}(\mu) \in U,~~ i =1, \hdots, l $.
				Then, we have 
				\begin{equation} \label{eq:widthsuper}
				\sigma_{{r}}({\mathcal{M}}; {K}) \leq \sum^l_{i=1} \sigma_{r_i}(\mathcal{M}^{(i)}; K_i), 
				\end{equation}
				with ${r} = \sum^l_{i=1} r_i,~K= \sum^l_{i=1} K_i$ and
				\begin{equation} \label{eq:widthsuper_iman}
				\mathcal{M}^{(i)} = \{ \bu^{(i)}(\mu): \mu \in \mathcal{P}\}.
				\end{equation} 
				\begin{proof}	
					See Appendix B. 
				\end{proof}
			\end{proposition}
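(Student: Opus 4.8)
The plan is to build a near-optimal dictionary for $\mathcal{M}$ by taking the union of near-optimal dictionaries for the component manifolds $\mathcal{M}^{(i)}$, and then to exploit that the $\langle \cdot, \cdot \rangle_U$-orthogonal projection onto a sum of subspaces is at least as good as the superposition of the individual projections.

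First I would fix $\eta > 0$ and, using that $\sigma_{r_i}(\mathcal{M}^{(i)}; K_i)$ is an infimum over dictionaries of cardinality $K_i$, pick for each $i \in \{1, \hdots, l\}$ a subset $\mathcal{D}^{(i)} \subset U$ with $\# \mathcal{D}^{(i)} = K_i$ such that
\[
\sup_{\bv \in \mathcal{M}^{(i)}} \ \min_{W \in \mathcal{L}_{r_i}(\mathcal{D}^{(i)})} \| \bv - \bP_W \bv \|_U \ \leq \ \sigma_{r_i}(\mathcal{M}^{(i)}; K_i) + \eta .
\]
Then I would set $\mathcal{D}_K := \bigcup_{i=1}^l \mathcal{D}^{(i)}$, noting that $\# \mathcal{D}_K \leq \sum_{i=1}^l K_i = K$; if the union is strictly smaller, I pad it with arbitrary vectors of $U$ up to cardinality exactly $K$, which only enlarges $\mathcal{L}_r(\mathcal{D}_K)$ and hence cannot increase the approximation error.

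Next, for an arbitrary $\mu \in \mathcal{P}$, I would write $\bu(\mu) = \sum_{i=1}^l \bu^{(i)}(\mu)$ and, for each $i$, choose $W^{(i)}(\mu) \in \mathcal{L}_{r_i}(\mathcal{D}^{(i)})$ that realizes, up to an extra $\eta$, the minimum above for $\bv = \bu^{(i)}(\mu)$. Setting $W(\mu) := \sum_{i=1}^l W^{(i)}(\mu)$, I observe that $W(\mu)$ is spanned by at most $\sum_{i=1}^l r_i = r$ vectors of $\mathcal{D}_K$, so after completing the spanning family to $r$ vectors of $\mathcal{D}_K$ if necessary, $W(\mu) \in \mathcal{L}_r(\mathcal{D}_K)$. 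Since $\sum_{i=1}^l \bP_{W^{(i)}(\mu)} \bu^{(i)}(\mu) \in W(\mu)$ and $\bP_{W(\mu)}$ is the best $\| \cdot \|_U$-approximation from $W(\mu)$, I obtain $\| \bu(\mu) - \bP_{W(\mu)} \bu(\mu) \|_U \leq \big\| \sum_{i=1}^l ( \bu^{(i)}(\mu) - \bP_{W^{(i)}(\mu)} \bu^{(i)}(\mu) ) \big\|_U$, and a triangle inequality bounds the right-hand side by $\sum_{i=1}^l ( \sigma_{r_i}(\mathcal{M}^{(i)}; K_i) + 2 \eta )$.

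Finally I would take the supremum over $\mu \in \mathcal{P}$ and the minimum over $\mathcal{L}_r(\mathcal{D}_K)$ on the left, use that $\sigma_r(\mathcal{M}; K)$ is the infimum of this quantity over all cardinality-$K$ dictionaries, and let $\eta \to 0$ to get~\textup{(\ref{eq:widthsuper})}. The one delicate point, and the step I would be most careful with, is the bookkeeping of cardinalities and dimensions: ensuring that the union of dictionaries and the sum of subspaces can be padded up to the exact sizes $K$ and $r$ without affecting the bound, and — because the widths are defined through infima and suprema — consistently working with near-optimal rather than exactly optimal dictionaries and subspaces. The analytic core (orthogonal projection onto $W^{(1)}(\mu) + \dots + W^{(l)}(\mu)$ beats the superposition of the component projections, combined with the triangle inequality) is immediate.
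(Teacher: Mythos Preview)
Your proof is correct and follows essentially the same route as the paper's: choose near-optimal dictionaries for each $\mathcal{M}^{(i)}$, take their union, observe that the sum of the component subspaces lies in $\mathcal{L}_r(\mathcal{D}_K)$, use that the orthogonal projection onto the sum beats the superposition of the individual projections together with the triangle inequality, and let the slack parameter tend to zero. The only minor difference is that the paper does not introduce a second $\eta$ for the choice of $W^{(i)}(\mu)$ (the minimum over the finite library $\mathcal{L}_{r_i}(\mathcal{D}^{(i)})$ is attained), and it does not discuss padding, but these are cosmetic.
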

			\begin{corollary}[Approximability of a superposition of solutions] \label{prop:corwidthsuper}
				Consider several solution manifolds $\mathcal{M}^{(i)}$ defined by~\textup {(\ref {eq:widthsuper_iman})}, $1 \leq i \leq l$, and the resulting manifold $\mathcal{M}$ defined by~\textup {(\ref {eq:widthsuper_sman})}. Let $c$, $C$, $\alpha$, $\beta$ and $\gamma$ be some {positive} constants. The following properties hold. 
				\begin{enumerate}[(i)]	
					\item If $\sigma_{r}(\mathcal{M}^{(i)}; c r^{\nu}) \leq C r^{-\alpha}$, then $\sigma_{r}(\mathcal{M}; c l^{1-\nu} r^{\nu}) \leq C l^{1+\alpha} r^{-\alpha}$,
					\item If $\sigma_{r}(\mathcal{M}^{(i)}; c r^{\nu}) \leq C e^{-\gamma r^{\beta}}$, then $\sigma_{r}(\mathcal{M};  c l^{1-\nu} r^{\nu}) \leq  C l e^{-\gamma l^{-\beta} r^{\beta}}$.
				\end{enumerate}
			\end{corollary}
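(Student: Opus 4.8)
The plan is to deduce both estimates directly from Proposition~\ref{prop:widthsuper} by splitting the dimension budget $r$ and the dictionary budget $K$ evenly among the $l$ component manifolds $\mathcal{M}^{(i)}$ defined in~\textup{(\ref{eq:widthsuper_iman})}. Concretely, I would apply~\textup{(\ref{eq:widthsuper})} with $r_i = r/l$ and $K_i = K/l$ for $i = 1,\dots,l$, so that $\sum_{i=1}^l r_i = r$ and $\sum_{i=1}^l K_i = K$, obtaining
\[
\sigma_r(\mathcal{M}; K) \leq \sum_{i=1}^l \sigma_{r/l}\bigl(\mathcal{M}^{(i)}; K/l\bigr).
\]

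The second step is to choose $K$ so that the per-component dictionary budget $K/l$ is exactly the one the hypothesis controls, that is, $K/l = c\,(r/l)^\nu$; this forces $K = c\,l\,(r/l)^\nu = c\,l^{1-\nu}r^\nu$, which is precisely the dictionary size appearing in the statement. In case~(i), the assumption applied with $r/l$ in place of $r$ gives $\sigma_{r/l}(\mathcal{M}^{(i)}; c(r/l)^\nu) \le C(r/l)^{-\alpha} = C\,l^{\alpha}\,r^{-\alpha}$ for each $i$; summing the $l$ terms yields $\sigma_r(\mathcal{M}; c\,l^{1-\nu}r^\nu) \le C\,l^{1+\alpha}r^{-\alpha}$. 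In case~(ii), the same substitution gives $\sigma_{r/l}(\mathcal{M}^{(i)}; c(r/l)^\nu) \le C e^{-\gamma (r/l)^\beta} = C e^{-\gamma l^{-\beta} r^\beta}$ for each $i$, and summing gives $\sigma_r(\mathcal{M}; c\,l^{1-\nu}r^\nu) \le C\,l\,e^{-\gamma l^{-\beta}r^\beta}$, as claimed.

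The only genuine technical point is that $r/l$ and $K/l$ need not be integers. I would handle this by taking the $r_i$ to be $\lceil r/l\rceil$ or $\lfloor r/l\rfloor$ so that they still sum to $r$ (and similarly for the $K_i$ and $K$), and then using that $\sigma_\rho(\mathcal{M}^{(i)}; m)$ is non-increasing in both $\rho$ and $m$ (enlarging either the subspace dimension or the dictionary only improves the best approximation) to bound each term by the one with $\lfloor r/l\rfloor$ and $\lfloor K/l\rfloor$; replacing $r/l$ by $\lfloor r/l\rfloor \ge r/l - 1$ merely perturbs the unspecified constants $C$ and $\gamma$. Beyond this bookkeeping the argument is a one-line substitution into Proposition~\ref{prop:widthsuper}, so I do not anticipate any real obstacle — the crux is simply noticing that the even split of $K$ regenerates exactly the budget $c\,l^{1-\nu}r^\nu$.
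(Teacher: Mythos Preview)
Your approach is correct and is exactly the intended one: the paper states this as a corollary of Proposition~\ref{prop:widthsuper} with no separate proof, and the derivation is precisely the equal split $r_i=r/l$, $K_i=K/l$ that you carry out. One small quibble: you say the integrality adjustment ``merely perturbs the unspecified constants $C$ and $\gamma$,'' but the constants in the conclusion are in fact specified (the same $C$, and the factors $l^{1+\alpha}$, $l$, $l^{-\beta}$); the cleanest reading is that the statement is meant for $r$ divisible by $l$ (or asymptotically in $r$), which the paper tacitly assumes as well.
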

			
			From~Proposition\nobreakspace \ref {prop:widthsuper} and\nobreakspace Corollary\nobreakspace \ref {prop:corwidthsuper} it follows that the approximability of the solution manifold in terms of the dictionary-based $r$-width is preserved under the superposition operation. In other words, if the dictionary-based $r$-widths of manifolds $\mathcal{M}^{(i)}$ {have} a certain decay with $r$ (e.g., exponential or algebraic), by using dictionaries containing $K=\mathcal{O}(r^\nu)$ vectors, then the type of decay is preserved by their superposition (with the same rate for the algebraic decay). This property can be crucial for problems where the solution is a superposition of several contributions (possibly unknown), which is a quite typical situation. {For instance, we have such a situation for PDEs with multiple transport phenomena.} {A similar property as~\textup {(\ref {eq:widthsuper})} also holds for the classical linear Kolmogorov $r$-width $d_{{r}}({\mathcal{M}})$.} Namely, we have
			\begin{equation} 
			d_{{r}}({\mathcal{M}}) \leq \sum^l_{i=1} d_{r_i}(\mathcal{M}^{(i)}), 
			\end{equation}
			with ${r} = \sum^l_{i=1} r_i$. This relation follows immediately from~Proposition\nobreakspace \ref {prop:widthsuper} and the fact that   $d_{{r}}({\mathcal{M}})=\sigma_r({\mathcal{M}}; {1})$. For the nonlinear Kolmogorov $r$-width~\textup {(\ref {eq:rwidth})}, however, the relation
			\begin{equation} 
			d_{{r}}({\mathcal{M}},M) \leq \sum^l_{i=1} d_{r_i}(\mathcal{M}^{(i)},M^{(i)}), 
			\end{equation}
			where ${r} = \sum^l_{i=1} r_i$, {holds under the condition that} $M \geq \prod_{i=1}^l {M}^{(i)}$. In general, the preservation of the type of decay with $r$ of $d_{{r}}({\mathcal{M}},M)$, by using libraries with $M=\mathcal{O}(r^\nu)$ terms, may not be guaranteed. {It} can require libraries with much larger numbers of $r$-dimensional subspaces than $\mathcal{O}(r^\nu)$, namely $M=\mathcal{O}(r^{l \nu})$ subspaces. 
			
			Another advantage of the dictionary-based method is its weak sensitivity to the parametrization of the manifold $\mathcal{M}$, in contrast to the $hp$-refinement method, for which a bad choice of parametrization can result in {approximations with too many local reduced subspaces.}  Indeed, the solution map $\mu \to \bu(\mu)$ is often expected to have certain properties (e.g., symmetries {or anisotropies}) that yield the existence of a better parametrization of $\mathcal{M}$ than the one proposed by the user. 
			Finding a good parametrization of the solution manifold may require a deep intrusive analysis of the problem, and is therefore usually an {intractable} task. On the other hand, our dictionary-based methodology provides a reduced approximation subspace for each vector from $\mathcal{M}$ regardless of the chosen parametrization. 
			
			\subsection{Sparse minimal residual approximation} \label{sminres}
			
			Here we assume to be given a dictionary $\mathcal{D}_K$ of $K$ vectors in $U$. Ideally, for each $\mu$, $\bu(\mu)$ should be approximated by orthogonal projection onto a subspace $W_r(\mu)$ that minimizes 
			\begin{equation} \label{eq:Vmproj}
			\|\bu(\mu) - \bP_{W_r(\mu)} \bu(\mu)\|_U
			\end{equation}
			over the library $\mathcal{L}_r(\mathcal{D}_K)$. The selection of the optimal subspace requires operating with the exact solution $\bu(\mu)$ which is  prohibited. Therefore, the reduced approximation space $U_r(\mu) \in \mathcal{L}_r(\mathcal{D}_K)$ and the associated approximate solution $\bu_r(\mu) \in U_r(\mu)$ are {obtained by residual norm minimization. The minimization of the residual norm, i.e., solving
				\begin{equation} \label{eq:Vmminresprojopt}
				\min_{W_r \in \mathcal{L}_r(\mathcal{D}_K)} \min_{\bw \in W_r } \|\br(\bw; \mu)\|_{U'},
				\end{equation}
				is a combinatorial problem that can be intractable in practice. From a practical perspective we will assume that only a suboptimal solution can be obtained.\footnote{The common approaches for solving sparse least-squares problems include greedy algorithms and LASSO. An analysis of these methods can be found in~\cite{devore2009nonlinear}.} Therefore we relax~\textup {(\ref {eq:Vmminresprojopt})} to the following problem: find $\bu_r(\mu)$ such that
				\begin{equation} \label{eq:Vmminresproj}
				\|\br(\bu_r(\mu); \mu)\|_{U'}   \leq D \min_{W_r \in \mathcal{L}_r(\mathcal{D}_K)} \min_{\bw \in W_r } \|\br(\bw; \mu)\|_{U'} + \tau \|\bb(\mu)\|_{U'},
				\end{equation}
				holds for a small constant $D \geq 1$ and tolerance  $\tau \geq 0$.} The solution $\bu_r(\mu)$ from~\textup {(\ref {eq:Vmminresproj})} shall be referred to as sparse minres approximation (relatively to {the} dictionary $\mathcal{D}_K$).   The quasi-optimality of this approximation {in the norm $\| \cdot \|_U$} can be characterized with the following parameter-dependent constants:    
			\begin{subequations} \label{eq:zetaiotarK}
				\begin{align} 
				&\zeta_{r,K}(\mu):=  \min_{W_r \in \mathcal{L}_r(\mathcal{D}_K)}  \min_{ \bx  \in \left (\mathrm{span} \{ \bu(\mu) \}+ W_r \right ) \backslash \{ \bnull \}} \frac{\| \bA(\mu) \bx  \|_{U'}}{\| \bx \|_U}, \label{eq:Vmalphar} \\ 
				&\iota_{r,K}(\mu):=  \max_{W_r \in \mathcal{L}_r(\mathcal{D}_K)}  \max_{ \bx  \in \left (\mathrm{span} \{ \bu(\mu) \}+ W_r \right ) \backslash \{ \bnull \}} \frac{\| \bA(\mu) \bx  \|_{U'}}{\| \bx \|_U}. \label{eq:Vmbetar}
				\end{align} 
			\end{subequations}
			{In general,} one can bound $\zeta_{r,K}(\mu)$ and $\iota_{r,K}(\mu)$ by the minimal and the maximal singular values $\alpha(\mu)$ and $\beta(\mu)$ of $\bA(\mu)$. Observe also that for $K=r$ (i.e., when the library $\mathcal{L}_r(\mathcal{D}_K) = \{ U_r\}$ has a single subspace) we have $\zeta_{r,K}(\mu) = \zeta_{r}(\mu)$ and $\iota_{r,K}(\mu) = \iota_{r}(\mu)$. 
			
			\begin{proposition}  \label{thm:Vmcea}
				Let $\bu_r(\mu)$ {satisfy}~\textup {(\ref {eq:Vmminresproj})} and $\zeta_{r,K}(\mu)>0$. Then
				\begin{equation} \label{eq:sminres_quasi-opt}
				{\| \bu(\mu)- \bu_r(\mu) \|_{U} \leq  \frac{\iota_{r,K}(\mu)}{\zeta_{r,K}(\mu)}(D \min_{W_r \in \mathcal{L}_r(\mathcal{D}_K)} \| \bu(\mu) - \bP_{W_r} \bu(\mu)\|_U +\tau \|\bu(\mu)\|_{U} ). }
				\end{equation}
				\begin{proof}
					See Appendix B. 
				\end{proof}
			\end{proposition}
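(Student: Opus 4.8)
The plan is to follow the same scheme as the proof of Proposition~\ref{thm:cea}, carrying the relaxation constant $D$ and the tolerance $\tau$ through the estimates. By construction of the sparse minres approximation, $\bu_r(\mu)$ lies in some subspace $U_r(\mu) \in \mathcal{L}_r(\mathcal{D}_K)$, so that $\bu(\mu) - \bu_r(\mu) \in \mathrm{span}\{\bu(\mu)\} + U_r(\mu)$. Since $\bA(\mu)\bu(\mu) = \bb(\mu)$, we have $\bA(\mu)\big(\bu(\mu) - \bu_r(\mu)\big) = \br(\bu_r(\mu);\mu)$, and because $\zeta_{r,K}(\mu)$ in~(\ref{eq:zetaiotarK}) is a minimum taken over \emph{all} subspaces $W_r$ in the library (in particular over $U_r(\mu)$) and over all nonzero vectors of $\mathrm{span}\{\bu(\mu)\} + W_r$, the first step is to conclude
\begin{equation*}
\| \bu(\mu) - \bu_r(\mu) \|_U \leq \frac{1}{\zeta_{r,K}(\mu)} \, \| \br(\bu_r(\mu);\mu) \|_{U'}.
\end{equation*}

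The second step is to bound the residual norm on the right using the defining inequality~(\ref{eq:Vmminresproj}). For the best-approximation term, given any $W_r \in \mathcal{L}_r(\mathcal{D}_K)$ I would take the competitor $\bw = \bP_{W_r}\bu(\mu) \in W_r$; then $\br(\bw;\mu) = \bA(\mu)\big(\bu(\mu) - \bw\big)$ with $\bu(\mu) - \bw \in \mathrm{span}\{\bu(\mu)\} + W_r$, so $\| \br(\bw;\mu) \|_{U'} \leq \iota_{r,K}(\mu) \, \| \bu(\mu) - \bP_{W_r}\bu(\mu) \|_U$, and minimizing over $\bw \in W_r$ and then over $W_r$ gives $\min_{W_r}\min_{\bw \in W_r}\| \br(\bw;\mu)\|_{U'} \leq \iota_{r,K}(\mu)\, \min_{W_r}\| \bu(\mu) - \bP_{W_r}\bu(\mu)\|_U$. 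For the tolerance term, $\bb(\mu) = \bA(\mu)\bu(\mu)$ with $\bu(\mu) \in \mathrm{span}\{\bu(\mu)\} + W_r$ for any $W_r$, hence $\| \bb(\mu)\|_{U'} \leq \iota_{r,K}(\mu)\, \| \bu(\mu)\|_U$. Substituting these two bounds into~(\ref{eq:Vmminresproj}), dividing by $\zeta_{r,K}(\mu) > 0$, and factoring out $\iota_{r,K}(\mu)/\zeta_{r,K}(\mu)$ yields exactly~(\ref{eq:sminres_quasi-opt}).

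The argument is essentially routine; the only point that requires care — the closest thing to an obstacle — is the bookkeeping of subspace membership, so that the constants $\zeta_{r,K}(\mu)$ and $\iota_{r,K}(\mu)$ may legitimately be applied. One must check that each vector whose $U$-norm is estimated (namely $\bu(\mu) - \bu_r(\mu)$, $\bu(\mu) - \bP_{W_r}\bu(\mu)$, and $\bu(\mu)$ itself) belongs to $\mathrm{span}\{\bu(\mu)\} + W_r$ for the relevant $W_r$, and exploit that $\zeta_{r,K}(\mu)$ (resp.\ $\iota_{r,K}(\mu)$) is, by~(\ref{eq:zetaiotarK}), a minimum (resp.\ maximum) over the whole library, so that it bounds the corresponding quotient $\| \bA(\mu)\bx\|_{U'} / \| \bx\|_U$ uniformly over all $W_r \in \mathcal{L}_r(\mathcal{D}_K)$. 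Handling the additive term $\tau \| \bb(\mu)\|_{U'}$ needs nothing more than the observation above that it is at most $\tau \iota_{r,K}(\mu)\| \bu(\mu)\|_U$, which is what turns the residual-side tolerance into the solution-side tolerance appearing in~(\ref{eq:sminres_quasi-opt}).
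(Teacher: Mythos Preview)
Your proof is correct and follows essentially the same approach as the paper's proof: lower-bound $\|\br(\bu_r(\mu);\mu)\|_{U'}$ via $\zeta_{r,K}(\mu)$, then upper-bound it using~\textup{(\ref{eq:Vmminresproj})} together with $\iota_{r,K}(\mu)$ applied to $\bu(\mu)-\bP_{W_r}\bu(\mu)$ and to $\bu(\mu)$ for the $\tau$-term. Your write-up is just more explicit about the subspace-membership checks that justify invoking the library-wide constants, which the paper leaves implicit in the phrase ``by definition of $\bu_r(\mu)$ and constants $\zeta_{r,K}(\mu)$ and $\iota_{r,K}(\mu)$.''
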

			Let $\bU_K \in \mathbb{K}^{n\times K}$ be a matrix whose columns are the vectors in the dictionary $\mathcal{D}_K$ and $\ba_{r,K} (\mu) \in \mathbb{K}^{K}$, with $\| \ba_{r,K}(\mu) \|_0 \leq r$,  be the $r$-sparse vector of coordinates of $\bu_r(\mu)$ in the dictionary, i.e., $\bu_r(\mu)= \bU_K\ba_{r,K}(\mu)$.  The vector of coordinates associated with {$\bu_r(\mu)$ satisfying}~\textup {(\ref {eq:Vmminresproj})} is {an} approximate solution to the following parameter-dependent sparse least-squares problem:
			\begin{equation} \label{eq:sparseminresproj}
			\min_{\bz \in \mathbb{K}^{K} } \|\bA(\mu) \bU_K \bz - \bb(\mu)\|_{U'}, \textup{ subject to } \| \bz \|_0 \leq r.
			\end{equation}
			For each $\mu \in \mathcal{P}$ an approximate solution to problem~\textup {(\ref {eq:sparseminresproj})} can be obtained with a standard greedy algorithm depicted in~Algorithm\nobreakspace \ref {alg:omp}. It selects the nonzero entries of $\ba_{r,K}(\mu)$ one by one to minimize the residual. The algorithm corresponds to either the orthogonal greedy {(also called Orthogonal Matching Pursuit in signal processing community~\cite{tropp2007signal})} or stepwise projection algorithm (see~\cite{devore2009nonlinear}) depending on whether the (optional) Step 8 (which is  the orthogonalization of $\{ \bv_j(\mu) \}^K_{j=1}$ with respect to $V_i(\mu)$) is considered. It should be noted that performing Step 8 can be of great importance due to possible high {mutual coherence} of the (mapped) dictionary $\{ \bv_j(\mu) \}^K_{j=1}$. Algorithm\nobreakspace \ref {alg:omp} is provided in a conceptual form. A more sophisticated procedure can be derived to improve the online efficiency (e.g., considering precomputed affine expansions of $\bA_K(\mu) := \bU_K^{\mathrm{H}} \bA(\mu)^{\mathrm{H}} \bR_U^{-1} \bA(\mu)\bU_K \in \mathbb{K}^{K\times K}$ and $\bb_K(\mu)= \bU_K^{\mathrm{H}} \bA(\mu)^{\mathrm{H}} \bR_U^{-1} \bb(\mu) \in \mathbb{K}^K$, updating the residual using a Gram-Schmidt procedure, etc). Algorithm\nobreakspace \ref {alg:omp}, even when efficiently implemented, can still require heavy computations in both the offline and online stages, and be numerically unstable. One of the contributions of this paper is a drastic improvement of its efficiency and stability by random sketching, thus making the use of dictionary-based model reduction feasible in practice.
			
			{\begin{algorithm}[h] \caption{Orthogonal greedy algorithm} \label{alg:omp}
					\begin{algorithmic}
						\Statex{\textbf{Given:} $\mu$, $\bU_K=[\bw_j]^K_{j=1}$, $\bA(\mu)$, $\bb(\mu)$, $\tau$, $r$.}
						\Statex{\textbf{Output}:  index set $\Lambda_r(\mu)$, the coordinates $\ba_{r}(\mu)$ of $\bu_r(\mu)$ on the basis $\{ \bw_j \}_{j \in \Lambda_r(\mu)} $.}
						\STATE{1. Set $i:=0$, $U_{0}(\mu) = \{ \bnull \}$, $\bu_{0}(\mu) = \bnull$, $\Lambda_0(\mu) = \emptyset$, $\widetilde{\Delta}_0(\mu) = \infty$.}
						\STATE{2. Set $[\bv_1(\mu), ..., \bv_K(\mu)]:= \bA(\mu) \bU_K$ and normalize the vectors $\bv_j(\mu)$, $1 \leq j \leq K$.}
						\WHILE{ $\widetilde{\Delta}_i(\mu) \geq \tau$ and $i < r$} 
						\STATE{3. Set $i:=i+1$.}
						\STATE{4. Find the index $p_i \in \{1, \hdots, K\}$ which maximizes $| \langle \bv_{p_i}(\mu), \br(\bu_{i-1}(\mu); \mu)  \rangle_{U'}|$. }
						\STATE{5. Set $\Lambda_i(\mu) := \Lambda_{i-1}(\mu) \cup \{ p_i\}$.}
						\STATE{6. Solve~\textup {(\ref {eq:reduced_system})} with a reduced matrix $\bU_i(\mu) = [\bw_j ]_{j \in \Lambda_i(\mu)}$ 
							and obtain coordinates $\ba_{i}(\mu)$.}		
						\STATE{7. Compute error bound $\widetilde{\Delta}_i(\mu)$ of $\bu_{i}(\mu) = \bU_i(\mu) \ba_{i}(\mu)$.}
						\STATE{8. (Optional) Set $\bv_j(\mu) := \bv_j(\mu) - \bP_{V_i(\mu)} \bv_j(\mu)$, where $\bP_{V_i(\mu)}$ is the orthogonal projector\\~~~~~~ on $V_i(\mu):= \mathrm{span}(\{\bv_p(\mu) \}_{p \in \Lambda_i(\mu)})$, and normalize $\bv_j(\mu)$, $j \in \{1,2,\hdots,K\}\backslash\Lambda_i(\mu)$.}		
						\ENDWHILE
					\end{algorithmic}
			\end{algorithm}}
			
			\subsection{Sketched sparse minimal residual approximation} \label{ssminres}
			Let $\bTheta \in \mathbb{K}^{k \times n}$ be a certain $U \to \ell_2$ subspace embedding. {A} sparse minres approximation defined by~\textup {(\ref {eq:Vmminresproj})}, associated with dictionary $\mathcal{D}_K$, can be estimated by {solving the following problem: find $\bu_r(\mu) \in U_r(\mu) \in \mathcal{L}_r(\mathcal{D}_K)$, such that
				\begin{equation} \label{eq:skVmminresproj}
				\|\br(\bu_r(\mu); \mu)\|^\bTheta_{U'}   \leq D \min_{W_r \in \mathcal{L}_r(\mathcal{D}_K)} \min_{\bw \in W_r } \|\br(\bw; \mu)\|^\bTheta_{U'} + \tau \|\bb(\mu)\|^\bTheta_{U'}.
				\end{equation}
			} In order to characterize the quasi-optimality of the sketched sparse minres approximation defined by~\textup {(\ref {eq:skVmminresproj})} we introduce the following parameter-dependent values
			\begin{subequations}
				\begin{align} 
				&\zeta^\bTheta_{r,K}(\mu):=  \min_{W_r \in \mathcal{L}_r(\mathcal{D}_K)}  \min_{ \bx  \in \left (\mathrm{span} \{ \bu(\mu) \}+ W_r \right ) \backslash \{ \bnull \}} \frac{\| \bA(\mu) \bx  \|^\bTheta_{U'}}{\| \bx \|_U}, \label{eq:sksparsealphar} \\ 
				&\iota^\bTheta_{r,K}(\mu):=  \max_{W_r \in \mathcal{L}_r(\mathcal{D}_K)}  \max_{ \bx  \in \left (\mathrm{span} \{ \bu(\mu) \}+ W_r \right ) \backslash \{ \bnull \}} \frac{\| \bA(\mu) \bx  \|^\bTheta_{U'}}{\| \bx \|_U}. \label{eq:sksparsebetar}
				\end{align} 
			\end{subequations}
			
			Observe that choosing $K=r$ yields $\zeta^\bTheta_{r,K}(\mu) = \zeta^\bTheta_{r}(\mu)$ and $\iota^\bTheta_{r,K}(\mu) = \iota^\bTheta_{r}(\mu)$. 
			
			\begin{proposition} \label{thm:sksparsecea}
				If~$\bu_r(\mu)$ satisfies~\textup {(\ref {eq:skVmminresproj})} and $\zeta^\bTheta_{r,K}(\mu)>0$, then {
					\begin{equation} \label{eq:sksparsequasi-opt}
					\| \bu(\mu)- \bu_r(\mu) \|_{U} \leq  \frac{\iota^\bTheta_{r,K}(\mu)}{\zeta^\bTheta_{r,K}(\mu)} ( D \min_{W_r \in \mathcal{L}_r(\mathcal{D}_K)}\| \bu(\mu) - \bP_{W_r} \bu(\mu)\|_U + \tau \|\bu(\mu)\|_{U}). 
					\end{equation}}
				\begin{proof}
					See Appendix B.
				\end{proof}
			\end{proposition}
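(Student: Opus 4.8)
The plan is to mirror the proof of Proposition~\ref{thm:Vmcea}, simply replacing the dual norm $\|\cdot\|_{U'}$ by the sketched semi-norm $\|\cdot\|^\bTheta_{U'}$ and the constants $\zeta_{r,K}(\mu),\iota_{r,K}(\mu)$ by their sketched counterparts $\zeta^\bTheta_{r,K}(\mu),\iota^\bTheta_{r,K}(\mu)$. Throughout I use linearity of $\bA(\mu)$, the identity $\br(\bw;\mu)=\bA(\mu)\bu(\mu)-\bA(\mu)\bw=\bA(\mu)(\bu(\mu)-\bw)$, and $\bb(\mu)=\bA(\mu)\bu(\mu)$.

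First I would dispose of the trivial case $\bu(\mu)=\bu_r(\mu)$, for which the left-hand side of~\textup{(\ref{eq:sksparsequasi-opt})} vanishes while the right-hand side is nonnegative. Assume then $\bu(\mu)-\bu_r(\mu)\neq\bnull$, and let $U_r(\mu)\in\mathcal{L}_r(\mathcal{D}_K)$ be a subspace from which $\bu_r(\mu)$ is picked, so that $\bu(\mu)-\bu_r(\mu)\in\mathrm{span}\{\bu(\mu)\}+U_r(\mu)$. Applying the definition of $\zeta^\bTheta_{r,K}(\mu)$ with the particular choice $W_r=U_r(\mu)$ and $\bx=\bu(\mu)-\bu_r(\mu)$ gives
\begin{equation*}
\zeta^\bTheta_{r,K}(\mu)\,\|\bu(\mu)-\bu_r(\mu)\|_U \leq \|\bA(\mu)(\bu(\mu)-\bu_r(\mu))\|^\bTheta_{U'} = \|\br(\bu_r(\mu);\mu)\|^\bTheta_{U'}.
\end{equation*}
Since $\zeta^\bTheta_{r,K}(\mu)>0$ by hypothesis, it remains to bound $\|\br(\bu_r(\mu);\mu)\|^\bTheta_{U'}$ from above.

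For the upper bound on the residual I would invoke the relaxed defining inequality~\textup{(\ref{eq:skVmminresproj})}. For the first term on its right-hand side, note that for every $W_r\in\mathcal{L}_r(\mathcal{D}_K)$ the choice $\bw=\bP_{W_r}\bu(\mu)\in W_r$ yields $\min_{\bw\in W_r}\|\br(\bw;\mu)\|^\bTheta_{U'}\leq\|\bA(\mu)(\bu(\mu)-\bP_{W_r}\bu(\mu))\|^\bTheta_{U'}$, and since $\bu(\mu)-\bP_{W_r}\bu(\mu)\in\mathrm{span}\{\bu(\mu)\}+W_r$ the definition of $\iota^\bTheta_{r,K}(\mu)$ bounds this by $\iota^\bTheta_{r,K}(\mu)\|\bu(\mu)-\bP_{W_r}\bu(\mu)\|_U$; taking the minimum over $W_r$ then controls the first term. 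For the second term, $\bu(\mu)$ itself lies in $\mathrm{span}\{\bu(\mu)\}+W_r$ for any $W_r$, so $\|\bb(\mu)\|^\bTheta_{U'}=\|\bA(\mu)\bu(\mu)\|^\bTheta_{U'}\leq\iota^\bTheta_{r,K}(\mu)\|\bu(\mu)\|_U$. Substituting these two bounds into~\textup{(\ref{eq:skVmminresproj})}, and the resulting estimate of $\|\br(\bu_r(\mu);\mu)\|^\bTheta_{U'}$ into the display of the previous paragraph, gives~\textup{(\ref{eq:sksparsequasi-opt})}.

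I do not anticipate a real obstacle: the argument is essentially a verbatim transcription of the single-subspace case (Proposition~\ref{thm:skcea}/Proposition~\ref{thm:Vmcea}). The only points requiring care are (i) the degenerate case $\bu(\mu)=\bu_r(\mu)$ together with the use of $\zeta^\bTheta_{r,K}(\mu)>0$ to divide, and (ii) the bookkeeping that the error vectors $\bu(\mu)-\bu_r(\mu)$ and $\bu(\mu)-\bP_{W_r}\bu(\mu)$, as well as $\bu(\mu)$ itself, all belong to the augmented subspaces $\mathrm{span}\{\bu(\mu)\}+W_r$ over which the constants in~\textup{(\ref{eq:sksparsealphar})}--\textup{(\ref{eq:sksparsebetar})} are defined --- this is exactly the purpose of the $\mathrm{span}\{\bu(\mu)\}$ summand. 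Note that no triangle inequality for the semi-norm $\|\cdot\|^\bTheta_{U'}$ is used, so the possible rank-deficiency of $\bTheta$ causes no difficulty.
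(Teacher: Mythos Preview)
Your proof is correct and follows essentially the same route as the paper's: the paper simply states that the argument is that of Proposition~\ref{thm:Vmcea} with $\|\cdot\|_{U'}$ replaced by $\|\cdot\|^\bTheta_{U'}$, and that proof is precisely the chain $\zeta_{r,K}\|\bu-\bu_r\|_U\le\|\br(\bu_r)\|_{U'}\le D\|\br(\bP_{U^*_r}\bu)\|_{U'}+\tau\|\bb\|_{U'}\le\iota_{r,K}(D\|\bu-\bP_{U^*_r}\bu\|_U+\tau\|\bu\|_U)$ that you reproduce. Your extra care with the degenerate case and the subspace bookkeeping is fine but not needed beyond what the paper does.
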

			It follows from~Proposition\nobreakspace \ref {thm:sksparsecea} that the quasi-optimality of the sketched sparse minres approximation can be controlled by bounding the constants $\zeta^\bTheta_{r,K}(\mu)$ and $\iota^\bTheta_{r,K}(\mu)$. 
			
			\begin{proposition} \label{ssminresopt}	
				If $\bTheta$ is a $U \to \ell_2$  $\varepsilon$-embedding for every subspace $R_r(W_r; \mu)$, defined by~\textup {(\ref {eq:R_r})}, with $W_r \in \mathcal{L}_r(\mathcal{D}_K)$, then 
				\begin{subequations}
					\begin{align} 
					&\sqrt{1-\varepsilon}{\zeta_{r,K}(\mu)} \leq {\zeta^\bTheta_{r,K}(\mu)} \leq \sqrt{1+\varepsilon} {\zeta_{r,K}(\mu)},
					\intertext{and } 
					&\sqrt{1-\varepsilon} {{\iota_{r,K}(\mu)}} \leq  {\iota^\bTheta_{r,K}(\mu)} \leq \sqrt{1+\varepsilon}{{\iota_{r,K}(\mu)}}.
					\end{align} 
				\end{subequations}
				
				\begin{proof}
					See Appendix B.
				\end{proof}
				
			\end{proposition}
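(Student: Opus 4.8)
The plan is to reduce the statement to the single–subspace result of Proposition~\ref{thm:skminresopt}, applied separately to each member of the (finite) library $\mathcal{L}_r(\mathcal{D}_K)$, and then to take the minimum/maximum over the library. First I would introduce subspace-dependent versions of the constants: for a subspace $W \subset U$ let $\zeta_r(W;\mu)$ and $\iota_r(W;\mu)$ be defined by~(\ref{eq:zetaiota_r}) with $U_r$ replaced by $W$, and let $\zeta^\bTheta_r(W;\mu)$, $\iota^\bTheta_r(W;\mu)$ be the same quantities with $\|\cdot\|_{U'}$ replaced by $\|\cdot\|^\bTheta_{U'}$. With this notation one has, directly from the definitions, $\zeta_{r,K}(\mu) = \min_{W \in \mathcal{L}_r(\mathcal{D}_K)} \zeta_r(W;\mu)$ and $\iota_{r,K}(\mu) = \max_{W \in \mathcal{L}_r(\mathcal{D}_K)} \iota_r(W;\mu)$, and likewise $\zeta^\bTheta_{r,K}(\mu) = \min_{W} \zeta^\bTheta_r(W;\mu)$, $\iota^\bTheta_{r,K}(\mu) = \max_{W} \iota^\bTheta_r(W;\mu)$.

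Next I would apply Proposition~\ref{thm:skminresopt} termwise. The proof of that proposition only uses that $U_r$ is a subspace of $U$, so it holds verbatim with $U_r$ replaced by any $W \in \mathcal{L}_r(\mathcal{D}_K)$, with $R_r(W;\mu)$ as in~(\ref{eq:R_r}). Since the hypothesis of the present proposition is precisely that $\bTheta$ is a $U\to\ell_2$ $\varepsilon$-embedding for each such $R_r(W;\mu)$, Proposition~\ref{thm:skminresopt} gives, for every fixed $W \in \mathcal{L}_r(\mathcal{D}_K)$,
\[
\sqrt{1-\varepsilon}\,\zeta_r(W;\mu) \le \zeta^\bTheta_r(W;\mu) \le \sqrt{1+\varepsilon}\,\zeta_r(W;\mu), \qquad \sqrt{1-\varepsilon}\,\iota_r(W;\mu) \le \iota^\bTheta_r(W;\mu) \le \sqrt{1+\varepsilon}\,\iota_r(W;\mu).
\]

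Finally I would pass to the library by monotonicity of $\min$ and $\max$ under pointwise inequalities: since the displayed bounds hold for every $W$, taking the minimum over $W \in \mathcal{L}_r(\mathcal{D}_K)$ in the $\zeta$-chain yields $\sqrt{1-\varepsilon}\,\zeta_{r,K}(\mu) \le \zeta^\bTheta_{r,K}(\mu) \le \sqrt{1+\varepsilon}\,\zeta_{r,K}(\mu)$, and taking the maximum over $W$ in the $\iota$-chain yields $\sqrt{1-\varepsilon}\,\iota_{r,K}(\mu) \le \iota^\bTheta_{r,K}(\mu) \le \sqrt{1+\varepsilon}\,\iota_{r,K}(\mu)$, which are the claimed inequalities. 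The argument is essentially bookkeeping and I do not anticipate a genuine obstacle; the only point requiring a small amount of care is to check that the monotonicity of $\min$/$\max$ is invoked in the correct direction for each of the four one-sided bounds (e.g.\ for the upper bound on $\zeta^\bTheta_{r,K}$ one uses that $f \le g$ pointwise implies $\min f \le \min g$, and for the lower bound that $f \ge h$ pointwise implies $\min f \ge \min h$).
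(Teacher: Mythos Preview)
Your proposal is correct and follows essentially the same route as the paper. The paper's proof is slightly more terse: it directly writes down the norm inequality $\sqrt{1-\varepsilon}\,\|\bA(\mu)\bx\|_{U'} \le \|\bA(\mu)\bx\|^\bTheta_{U'} \le \sqrt{1+\varepsilon}\,\|\bA(\mu)\bx\|_{U'}$ for all $\bx \in \mathrm{span}\{\bu(\mu)\}+W_r$ with $W_r \in \mathcal{L}_r(\mathcal{D}_K)$ and then says the result follows from the definitions, whereas you modularly invoke Proposition~\ref{thm:skminresopt} termwise and make the min/max monotonicity step explicit; the underlying argument is the same.
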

			By~Definition\nobreakspace \ref {def:oblepsilon_embedding} and the union bound for the probability of success, if $\bTheta$ is a $(\varepsilon, \binom{K}{r}^{-1} \delta, r+1 )$ oblivious $U \to \ell_2$ subspace embedding, then $\bTheta$ satisfies the assumption of~Proposition\nobreakspace \ref {ssminresopt}  with probability {of} at least  $1-\delta$. The sufficient number of rows for $\bTheta$ may be chosen a priori with the bounds provided in~\cite{balabanov2019galerkin} or adaptively with a procedure from~Section\nobreakspace \ref {sketchcert}. For the Gaussian embeddings the a priori bounds are logarithmic in $K$ and $n$, and proportional to $r$. For SRHT they are also logarithmic in $K$ and $n$, but proportional to $r^2$ (although in practice SRHT performs equally well as the Gaussian distribution). Moreover, if $\mathcal{P}$ is a finite set, an oblivious embedding $\bTheta$ which satisfies the hypothesis of~Proposition\nobreakspace \ref {ssminresopt} for all $\mu \in \mathcal{P}$, simultaneously, may be chosen using the above considerations and a union bound for the probability of success. Alternatively, for an infinite set $\mathcal{P}$, $\bTheta$ can be chosen as an $\varepsilon$-embedding for a collection of low-dimensional subspaces $R^*_r(W_r)$ (which can be obtained from the affine expansions of $\bA(\mu)$ and $\bb(\mu)$) each containing $\bigcup_{\mu  \in \mathcal{P}} R_r(\mu; W_r)$ and associated with a subspace $W_r$ of $\mathcal{L}_r(\mathcal{D}_K)$. Such an embedding can be again generated in an oblivious way by considering~Definition\nobreakspace \ref {def:oblepsilon_embedding} and a union bound for the probability of success. 
			
			From an algebraic point of view, the optimization problem~\textup {(\ref {eq:skVmminresproj})} can be formulated as the following sparse least-squares problem:
			\begin{equation} \label{eq:algsksparseminresproj}
			\min_{\substack{\bz \in \mathbb{K}^{K} \\ \| \bz \|_0 \leq r }} \|\bA(\mu) \bU_K \bz - \bb(\mu)\|^\bTheta_{U'} =  \min_{\substack{\bz \in \mathbb{K}^{K} \\ \| \bz \|_0 \leq r}} \|\bV_K^\bTheta(\mu) \bz - \bb^\bTheta(\mu)\|,
			\end{equation}
			where $\bV_K^\bTheta(\mu)$ and $\bb^\bTheta(\mu)$ are the components~\textup {(\ref {eq:sketch})} of the $\bTheta$-sketch of $\bU_K$ (a matrix whose columns are the vectors in $\mathcal{D}_K$). {An approximate} solution $\ba_{r,K}(\mu)$ of~\textup {(\ref {eq:algsksparseminresproj})} is the $r$-sparse vector of the coordinates of $\bu_r(\mu)$. We observe that~\textup {(\ref {eq:algsksparseminresproj})} is simply an approximation of a small vector $\bb^\bTheta(\mu)$ with a dictionary composed from column vectors of $\bV_K^\bTheta(\mu)$. Therefore, unlike the original sparse least-squares problem~\textup {(\ref {eq:sparseminresproj})}, the solution to its sketched version~\textup {(\ref {eq:algsksparseminresproj})} can be efficiently approximated with standard tools in the online stage. For instance, we can use~Algorithm\nobreakspace \ref {alg:omp} replacing $\langle \cdot, \cdot \rangle_{U'}$ with $\langle \cdot, \cdot  \rangle^\bTheta_{U'}$. Clearly, in~Algorithm\nobreakspace \ref {alg:omp} the inner products $\langle \cdot, \cdot \rangle^\bTheta_{U'}$ should be efficiently evaluated from $\bV_K^\bTheta(\mu)$ and $\bb^\bTheta(\mu)$. For this a $\bTheta$-sketch of $\bU_K$ can be precomputed in the offline stage and then used for online evaluation of $\bV_K^\bTheta(\mu)$ and $\bb^\bTheta(\mu)$ for each value of the parameter. Another way to obtain an approximate solution to the sketched sparse least-squares problem~\textup {(\ref {eq:algsksparseminresproj})} is to use LASSO or similar methods. 
			
			{Let us now characterize the algebraic stability (i.e., sensitivity to round-off errors) of the (approximate) solution of~\textup {(\ref {eq:algsksparseminresproj})}. The solution of~\textup {(\ref {eq:algsksparseminresproj})} is essentially obtained from the following least-squares problem    
				\begin{equation} \label{eq:stabalgsksparseminresproj}
				\min_{{\bx \in \mathbb{K}^{r}}} \|\bV_r^\bTheta(\mu) \bx - \bb^\bTheta(\mu)\|,
				\end{equation}
				where $\bV_r^\bTheta(\mu)$ is a matrix whose column vectors are (adaptively) selected from the columns of $\bV_K^\bTheta(\mu)$. The algebraic stability of this problem can be measured by the condition number of $\bV_r^\bTheta(\mu)$.
				The minimal and the maximal singular values of $\bV_r^\bTheta(\mu)$ can be bounded using the parameter-dependent coefficients $\iota^\bTheta_{r,K}(\mu)$, $\zeta^\bTheta_{r,K}(\mu)$ and the so-called restricted isometry property (RIP) constants associated with the dictionary $\mathcal{D}_K$, which are defined by
				\begin{equation} \label{eq:ric}
				\Sigma^\mathrm{min}_{r,K}:=  \min_{\substack{\bz \in \mathbb{K}^{K} \\ \| \bz \|_0 \leq r }} \frac{\| \bU_K \bz  \|_{U}}{\| \bz \|}, ~~~~ \Sigma^\mathrm{max}_{r,K}:=  \max_{\substack{\bz \in \mathbb{K}^{K} \\ \| \bz \|_0 \leq r }} \frac{\| \bU_K \bz  \|_{U}}{\| \bz \|}. 
				\end{equation} 
				\begin{proposition} \label{prop:stabalgsksparseminresproj}
					The minimal singular value of $\bV_r^\bTheta(\mu)$ in~\textup {(\ref {eq:stabalgsksparseminresproj})} is bounded below by $\zeta^\bTheta_{r,K}(\mu) \Sigma^\mathrm{min}_{r,K}$, while the maximal singular value of $\bV_r^\bTheta(\mu)$  is bounded above by $\iota^\bTheta_{r,K}(\mu) \Sigma^\mathrm{max}_{r,K}$.
					\begin{proof}
						See Appendix B. 
					\end{proof}
				\end{proposition}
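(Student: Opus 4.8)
The plan is to reduce the claim to a single two-factor estimate for the Rayleigh quotient of $\bV_r^\bTheta(\mu)$, in which one factor is controlled by the quasi-optimality constants $\zeta^\bTheta_{r,K}(\mu),\iota^\bTheta_{r,K}(\mu)$ and the other by the restricted isometry constants $\Sigma^\mathrm{min}_{r,K},\Sigma^\mathrm{max}_{r,K}$. First I would fix notation. Since the columns of $\bV_r^\bTheta(\mu)$ are (adaptively) selected from those of $\bV_K^\bTheta(\mu)=\bTheta\bR_U^{-1}\bA(\mu)\bU_K$, there is an index set $\Lambda\subset\{1,\dots,K\}$ with $\#\Lambda=r$ such that, for every $\bx\in\mathbb{K}^{r}$, $\bV_r^\bTheta(\mu)\bx=\bTheta\bR_U^{-1}\bA(\mu)\bU_K\bz$, where $\bz\in\mathbb{K}^{K}$ is supported on $\Lambda$ and carries the entries of $\bx$; in particular $\|\bz\|_0\le r$ and $\|\bz\|=\|\bx\|$. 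Set $\bw:=\bU_K\bz\in W_r$, where $W_r:=\mathrm{span}\{\bw_j:j\in\Lambda\}\in\mathcal{L}_r(\mathcal{D}_K)$. By the definition of $\langle\cdot,\cdot\rangle^\bTheta_{U'}$ in~\textup{(\ref{eq:thetadef})}, $\|\bV_r^\bTheta(\mu)\bx\|=\|\bTheta\bR_U^{-1}\bA(\mu)\bw\|=\|\bA(\mu)\bw\|^\bTheta_{U'}$.

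The lower bound is then obtained by chaining two inequalities. On the one hand,
\[
\|\bA(\mu)\bw\|^\bTheta_{U'}\ \ge\ \zeta^\bTheta_{r,K}(\mu)\,\|\bw\|_U,
\]
which is trivial when $\bw=\bnull$ and, when $\bw\ne\bnull$, holds because $\bw$ is a nonzero element of $\mathrm{span}\{\bu(\mu)\}+W_r$ with $W_r\in\mathcal{L}_r(\mathcal{D}_K)$, so $\|\bA(\mu)\bw\|^\bTheta_{U'}/\|\bw\|_U$ is bounded below by the double minimum defining $\zeta^\bTheta_{r,K}(\mu)$ in~\textup{(\ref{eq:sksparsealphar})}. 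On the other hand, $\|\bw\|_U=\|\bU_K\bz\|_U\ge\Sigma^\mathrm{min}_{r,K}\|\bz\|=\Sigma^\mathrm{min}_{r,K}\|\bx\|$ by the definition~\textup{(\ref{eq:ric})} of $\Sigma^\mathrm{min}_{r,K}$ (applicable since $\|\bz\|_0\le r$). Hence $\|\bV_r^\bTheta(\mu)\bx\|\ge\zeta^\bTheta_{r,K}(\mu)\,\Sigma^\mathrm{min}_{r,K}\,\|\bx\|$ for all $\bx$, i.e. $\sigma_{\min}(\bV_r^\bTheta(\mu))\ge\zeta^\bTheta_{r,K}(\mu)\,\Sigma^\mathrm{min}_{r,K}$. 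The upper bound follows by the mirror-image computation: $\|\bA(\mu)\bw\|^\bTheta_{U'}\le\iota^\bTheta_{r,K}(\mu)\|\bw\|_U$ (trivial if $\bw=\bnull$) and $\|\bw\|_U\le\Sigma^\mathrm{max}_{r,K}\|\bx\|$ give $\|\bV_r^\bTheta(\mu)\bx\|\le\iota^\bTheta_{r,K}(\mu)\,\Sigma^\mathrm{max}_{r,K}\,\|\bx\|$, hence $\sigma_{\max}(\bV_r^\bTheta(\mu))\le\iota^\bTheta_{r,K}(\mu)\,\Sigma^\mathrm{max}_{r,K}$.

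This argument is essentially bookkeeping: the chain $\bx\mapsto\bz\mapsto\bw=\bU_K\bz$ converts the sparse $\ell_2$-geometry of the coordinates into the $\|\cdot\|_U$-geometry of $W_r$, and the embedding identity converts $\|\cdot\|^\bTheta_{U'}$ back to the Euclidean norm of $\bV_r^\bTheta(\mu)\bx$. The only point needing a word of care is the degenerate case where the selected dictionary columns are linearly dependent (equivalently $\Sigma^\mathrm{min}_{r,K}=0$), so that $\bw=\bnull$ for some $\bx\ne\bnull$; this is harmless, since both sides of the relevant inequality then vanish and no non-degeneracy hypothesis on $\mathcal{D}_K$ is required. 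I do not anticipate any genuine obstacle.
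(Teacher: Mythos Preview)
Your proposal is correct and follows essentially the same approach as the paper's proof: both introduce the sparse coordinate vector $\bz$ supported on the selected index set, write $\bw=\bU_K\bz$, and factor the Rayleigh quotient as $\|\bA(\mu)\bw\|^\bTheta_{U'}/\|\bw\|_U$ times $\|\bw\|_U/\|\bx\|$, bounding the first factor via $\zeta^\bTheta_{r,K}(\mu),\iota^\bTheta_{r,K}(\mu)$ and the second via the RIP constants. Your handling of the degenerate case $\bw=\bnull$ is in fact slightly more careful than the paper's, which tacitly divides by $\|\bw\|_U$.
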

				The RIP constants quantify the linear dependency of the dictionary vectors. For instance, it is easy to see that for a dictionary composed of orthogonal unit vectors we have $\Sigma^\mathrm{min}_{r,K}=\Sigma^\mathrm{max}_{r,K}=1$. From~Proposition\nobreakspace \ref {prop:stabalgsksparseminresproj}, one can deduce the maximal level of degeneracy of $\mathcal{D}_K$ for which the sparse optimization problem~\textup {(\ref {eq:algsksparseminresproj})} remains sufficiently stable.}
			
			\begin{remark}
				{In general, our approach is more stable than the algorithms from~\cite{dihlmann2012online,kaulmann2013online}. These algorithms basically proceed with the solution of the reduced system of equations $\bA_r(\mu)\ba_r(\mu) = \bb_r(\mu)$, where $\bA_r(\mu)=\bU_r(\mu)^\mathrm{H} \bA(\mu) \bU_r(\mu)$, with $\bU_r(\mu)$ being a matrix whose column vectors are selected from the column vectors of $\bU_K$. In this case, the bounds for the minimal and the maximal singular values of $\bA_r(\mu)$ are proportional to the squares of the minimal and the maximal singular values of $\bU_r(\mu)$, which implies a quadratic dependency on the RIP constants $\Sigma^\mathrm{min}_{r,K}$ and $\Sigma^\mathrm{max}_{r,K}$. On the other hand, with (sketched) minres methods the dependency of the singular values of the reduced matrix $\bV^\bTheta(\mu)$ on $\Sigma^\mathrm{min}_{r,K}$ and $\Sigma^\mathrm{max}_{r,K}$ is only linear (see~Proposition\nobreakspace \ref {prop:stabalgsksparseminresproj}). Consequently, our methodology provides an improvement of not only efficiency but also numerical stability for problems with high linear dependency of dictionary vectors.  
				}
			\end{remark}
			
			{Similarly to the sketched minres projection, a better online efficiency can be obtained  by introducing
				$$\bPhi = \bGamma \bTheta,$$ 
				where $\bGamma \in \mathbb{K}^{k' \times k}$, $k'< k$, is a small $( \varepsilon', \binom{K}{r}^{-1} \delta', r+1)$ oblivious  $\ell_2 \to \ell_2$ subspace embedding, and by computing $\bu_r(\mu)$ such that
				\begin{equation} \label{eq:sksparseminresproj}
				\|\br(\bu_r(\mu); \mu)\|^\bPhi_{U'}   \leq D \min_{W_r \in \mathcal{L}_r(\mathcal{D}_K)} \min_{\bw \in W_r } \|\br(\bw; \mu)\|^\bPhi_{U'} + {\tau \| \bb(\mu) \|^\bPhi_{U'}} ,
				\end{equation}
				It follows that, for a single $\mu$, the accuracy {(and the stability)} of a solution $\bu_r(\mu)$ of~\textup {(\ref {eq:sksparseminresproj})} is {almost the same} as a solution of~\textup {(\ref {eq:skVmminresproj})} with probability at least $1-\delta'$.} In an algebraic setting{,}~\textup {(\ref {eq:sksparseminresproj})} can be expressed as 
			
			\begin{equation} \label{eq:algsksparseminresprojeff}
			\min_{\substack{\bz \in \mathbb{K}^{K} \\ \| \bz \|_0 \leq r }} \|\bV_K^\bPhi(\mu) \bz - \bb^\bPhi(\mu)\|,
			\end{equation}
			whose {(approximate)} solution $\ba_r(\mu)$ is a $r$-sparse vector of coordinates of $\bu_r(\mu)$. {Such a} solution can be computed with~Algorithm\nobreakspace \ref {alg:omp} by replacing $\langle \cdot, \cdot \rangle_{U'}$ with $\langle \cdot, \cdot  \rangle^\bPhi_{U'}$.
			An efficient procedure for evaluating the coordinates of a sketched dictionary-based approximation on a test set $\mathcal{P}_{\mathrm{test}}$ from the $\bTheta$-sketch of $\bU_K$ is provided in~Algorithm\nobreakspace \ref {alg:skomp}. Algorithm\nobreakspace \ref {alg:skomp} uses a residual-based sketched error estimator from~\cite{balabanov2019galerkin} defined by
			\begin{equation} \label{eq:reserrorind}
			\widetilde{\Delta}_i(\mu) = \Delta^\bPhi(\bu_i(\mu); \mu) = \frac{\|\br(\bu_i(\mu); \mu) \|^\bPhi_{U'}}{\eta(\mu)},
			\end{equation}
			where $\eta(\mu)$ is a {(online)} computable lower bound/{estimator} of the minimal singular value of matrix $\bA(\mu)$ {seen as operator from $U$ to $U'$.} \footnote{{In many applications it can be sufficient to take $\eta(\mu)$ as constant, e.g., equal to the (approximation of the) smallest of singular values of $\bA(\mu)$ on a training set. Sharper bounds $\eta(\mu)$ can be obtained from theoretical approaches or the successive constraint method~\cite{haasdonk2017reduced,huynh2007successive,						Rozza2007,HUYNH20101963,CHEN2009,CHEN20081295}.}  } 				
			Let us underline the importance of performing Step 8 (orthogonalization of the dictionary vectors with respect to the previously selected basis vectors), for problems with ``degenerate'' dictionaries (with high mutual coherence).  It should be noted that at Steps 7 and 8 we use a Gram-Schmidt procedure for orthogonalization because of its simplicity and efficiency, whereas a modified Gram-Schmidt algorithm could provide better accuracy.  It is also important to note that~Algorithm\nobreakspace \ref {alg:skomp} satisfies a basic consistency property in the sense that it exactly recovers the vectors from the dictionary with high probability. 
			
			If $\mathcal{P}$ is a finite set, then the theoretical bounds for Gaussian matrices and the empirical experience for SRHT state that choosing $k=\mathcal{O}(r\log{K} +{\log{(1/\delta)}}+\log{(\#\mathcal{P}))}$ and $k'=\mathcal{O}(r\log{K} +{\log{(1/\delta)}}+\log{(\#\mathcal{P}_\mathrm{test})})$ in~Algorithm\nobreakspace \ref {alg:skomp} yield a quasi-optimal solution to~\textup {(\ref {eq:Vmproj})} for all $\mu \in \mathcal{P}_\mathrm{test}$ with probability at least $1-\delta$. Let us neglect the logarithmic summands.  Assuming $\bA(\mu)$ and $\bb(\mu)$ admit affine representations with $m_A$ and $m_b$ terms, it follows that the online complexity and memory consumption of~Algorithm\nobreakspace \ref {alg:skomp} is only $\mathcal{O}((m_A K + m_b) r \log{K} + r^2 K \log{K}) \# \mathcal{P}_\mathrm{test}$ and $\mathcal{O}((m_A K +m_b) r \log{K})$, respectively. 
			The quasi-optimality for infinite $\mathcal{P}$ can be ensured with high probability by increasing $k$ to  $\mathcal{O}(r^*\log{K} +\log{\delta})$, where $r^*$ is the maximal dimension of subspaces $R^*_r(W_r)$ containing $\bigcup_{\mu  \in \mathcal{P}}  R_r(\mu; W_r)$ with $W_r \in \mathcal{L}_r(\mathcal{D}_K)$. This shall increase the memory consumption by a factor of $r^*/r$ but should have a negligible effect (especially for large $\mathcal{P}_\mathrm{test}$) on the complexity, which is mainly characterized by the size of $\bPhi$.  Note that for {parameter-separable} problems we have $r^* \leq m_A r+m_b$.

			\begin{algorithm} \caption{Efficient/stable sketched orthogonal greedy algorithm} \label{alg:skomp}
				\begin{algorithmic}
					\STATE{\textbf{Given:} $\mathcal{P}_{\mathrm{test}}$, $\bTheta$-sketch of $\bU_K = [\bw_j]^K_{j=1}$, $\tau$, $r$.}
					\STATE{\textbf{Output}: index set $\Lambda_r(\mu)$, the coordinates $\ba_{r}(\mu)$ of $\bu_r(\mu)$ on basis $\{ \bw_j \}_{j \in \Lambda_r(\mu)}$,\\  
						~~~~~~~~~~~~~and the error indicator $ \Delta^\bPhi(\bu_r(\mu);\mu)$ for each $\mu \in \mathcal{P}_{\mathrm{test}} $.} 
					\STATE{1. Generate $\bGamma$ and evaluate the affine factors of $\bV^{\bPhi}_K(\mu):= \bGamma \bV^{\bTheta}_K(\mu)$ and $\bb^\bPhi(\mu):=\bGamma \bb^{\bTheta}(\mu)$.}
					
					\FOR{$\mu \in \mathcal{P}_{\mathrm{test}}$}
					\STATE{2. Evaluate $[\bv^\bPhi_1(\mu), \dots, \bv^\bPhi_K(\mu)] :=\bV^{\bPhi}_K(\mu)$ and $\bb^{\bPhi}(\mu)$ from the affine expansions \\  
						~~~~~~~and normalize $\bv^\bPhi_j(\mu), 1 \leq j \leq K$.}
					
					\STATE{3. Set $i=0$, obtain $\eta(\mu)$ in~\textup {(\ref {eq:reserrorind})}, set $\Lambda_0(\mu) = \emptyset$, $\br^\bPhi_{0}(\mu) := \bb^\bPhi(\mu)$ \\
						~~~~~~~and $\Delta^\bPhi(\mu):= \| \bb^\bPhi(\mu) \| / \eta(\mu)$.}
					\WHILE{ $\Delta^\bPhi(\bu_i(\mu); \mu) \geq \tau$ and $i \leq r$} 
					\STATE{4. Set $i:=i+1$.}
					\STATE{5. Find the index $p_i$ which maximizes $|\bv^\bPhi_{p_i}(\mu)^\mathrm{H} \br^\bPhi_{i-1}(\mu)|$. 
						Set $\Lambda_i(\mu) := \Lambda_{i-1}(\mu) \cup \{ p_i\}$.}	
					\STATE{6. Set $\bv^\bPhi_{p_i}(\mu): = \bv^\bPhi_{p_i}(\mu) - \sum^{i-1}_{j=1} \bv^\bPhi_{p_j}(\mu) [\bv^\bPhi_{p_j}(\mu)^\mathrm{H} \bv^\bPhi_{p_i}(\mu)] $ and normalize it.}		
					\STATE{7. Compute $\br^\bPhi_i(\mu):= \br^\bPhi_{i-1}(\mu) - \bv^\bPhi_{p_i}(\mu) [\bv^\bPhi_{p_i}(\mu)^\mathrm{H} \br^\bPhi_{i-1}(\mu)] $, 
						$\Delta^\bPhi(\bu_i(\mu); \mu) = \| \br^\bPhi_i(\mu) \| / \eta(\mu)$.}
					\STATE{8. (Optional) Set $\bv^\bPhi_{j}(\mu) = \bv^\bPhi_{j}(\mu) - \bv^\bPhi_{p_i}(\mu) [\bv^\bPhi_{p_i}(\mu)^\mathrm{H} \bv^\bPhi_{j}(\mu)]$ and normalize it,\\ ~~~~~~~~ {$j \in \{1,2,\hdots,K\}\backslash\Lambda_i(\mu)$.} }		
					\ENDWHILE
					\STATE{9. Solve~\textup {(\ref {eq:skreduced_eq_eff})} choosing $r:=i$ and the columns $p_1, p_2, \dots, p_i$ of $\bV^{\bPhi}_K(\mu)$ as
						the columns\\~~~~~~~for $\bV^{\bPhi}_r(\mu)${,} and obtain solution $\ba_r(\mu)$.}
					\ENDFOR
				\end{algorithmic}
			\end{algorithm}

			\subsection{Dictionary generation} \label{Dgen} 
			
			The simplest way is to choose the dictionary as a set of solution samples (snapshots) associated with a training set $\mathcal{P}_{\mathrm{train}}$, i.e., 
			\begin{equation} 
			\mathcal{D}_K = \{ \bu(\mu): \mu \in  \mathcal{P}_{\mathrm{train}} \}.
			\end{equation}
			Let us recall that we are interested in computing a $\bTheta$-sketch of $\bU_K$ (matrix whose columns form $\mathcal{D}_K$) rather than the full matrix. In certain computational environments, a $\bTheta$-sketch of $\bU_K$ can be computed very efficiently. For instance, each snapshot may be computed and sketched on a separate distributed machine. Thereafter small sketches can be efficiently transfered to the master workstation for constructing the reduced order model.
			
			A better dictionary may be computed with the greedy procedure presented in~Algorithm\nobreakspace \ref {alg:sk_greedy_online}, recursively enriching the dictionary  with a snapshot  at the parameter value associated with the maximal error at the previous iteration. The value for $r$ (the dimension of the parameter-dependent reduced subspace $U_r(\mu)$) should be chosen according to the particular computational architecture. Since the provisional online solver (identified with~Algorithm\nobreakspace \ref {alg:skomp}) guarantees exact recovery of snapshots belonging to $\mathcal{D}_i$, Algorithm\nobreakspace \ref {alg:sk_greedy_online} is consistent.  It has to be noted that the first $r$ iterations of the proposed greedy algorithm for the dictionary generation coincide with the first $r$ iterations of the standard greedy algorithm for the reduced basis generation.
			
			\begin{algorithm} \caption{Greedy algorithm for dictionary generation} \label{alg:sk_greedy_online}
				\begin{algorithmic}
					\STATE{\textbf{Given:} $\mathcal{P}_{\mathrm{train}}$, $\bA(\mu)$, $\bb(\mu)$, $\bl(\mu)$, $\bTheta$, $\tau$, $r$.}
					\STATE{\textbf{Output}: $\bTheta$-sketch of $\bU_K$.}
					\STATE{1. Set $i=0$, $\mathcal{D}_0 = \emptyset$, obtain $\eta(\mu)$ in~\textup {(\ref {eq:reserrorind})}, set $\Delta^\bPhi(\mu) = \| \bb(\mu) \|^\Phi_{U'}/\eta(\mu)$ 
						and pick $\mu^1 \in \mathcal{P}_{\mathrm{train}}$.}
					\WHILE{$\max_{\mu \in \mathcal{P}{\mathrm{train}}}\Delta^\bPhi(\bu_r(\mu);\mu)> \tau$} 
					\STATE{2. Set $i=i+1$.}
					\STATE{3. Evaluate $\bu(\mu^{i})$ and set $\mathcal{D}_{i}:= \mathcal{D}_{i-1} \cup \{ \bu(\mu^{i}) \}$}
					\STATE{4. Update the $\bTheta$-sketch of $\bU_i$ (matrix composed from the vectors in $\mathcal{D}_i$).}
					\STATE{5. Use~Algorithm\nobreakspace \ref {alg:skomp} (if $i < r$, choosing $r:=i$) with $\mathcal{P}_{\mathrm{test}}$ replaced by $\mathcal{P}_{\mathrm{train}}$ to solve~\textup {(\ref {eq:sksparseminresproj})} \\ 
						~~~~~~~for all $\mu \in \mathcal{P}_{\mathrm{train}}$.}
					\STATE{6. Find $\mu^{i+1}:= \arg \max_{\mu \in \mathcal{P}{\mathrm{train}}}\Delta^\bPhi(\bu_r(\mu);\mu)$.}
					\ENDWHILE
				\end{algorithmic}
				
			\end{algorithm}
			
			By~Proposition\nobreakspace \ref {ssminresopt}, a good quality of {a} $\bTheta$-sketch for the sketched sparse minres approximation associated with dictionary $\mathcal{D}_K$ on $\mathcal{P}_\mathrm{train}$ can be guaranteed if $\bTheta$ is an $\varepsilon$-embedding for every subspace $R_r(W_r; \mu)$, defined by~\textup {(\ref {eq:R_r})}, with $W_r \in \mathcal{L}_r(\mathcal{D}_K)$ and $\mu \in \mathcal{P}_\mathrm{train}$. This condition can be enforced a priori for all possible outcomes of~Algorithm\nobreakspace \ref {alg:sk_greedy_online} by choosing $\bTheta$ such that it is an $\varepsilon$-embedding for every subspace $R_r(W_r; \mu)$ with $W_r \in \mathcal{L}_r(\{ \bu(\mu): \mu \in \mathcal{P}_\mathrm{train}\})$ and $\mu \in \mathcal{P}_\mathrm{train}$. An embedding $\bTheta$ satisfying this property with probability at least $1-\delta$ can be obtained as a realization of a $(\varepsilon, (\#{P}_\mathrm{train})^{-1}\binom{\#\mathcal{P}_\mathrm{train}}{r}^{-1} \delta, r+1 )$ oblivious $U \to \ell_2$ subspace embedding. {The computational cost of~Algorithm\nobreakspace \ref {alg:sk_greedy_online} is dominated by the calculation of the snapshots $\bu(\mu^i)$ and their $\bTheta$-sketches. As was argued in~\cite{balabanov2019galerkin}, the computation of the snapshots can have only a minor impact on the overall cost of an algorithm. For the classical sequential or limited-memory computational architectures, each snapshot should require a log-linear complexity and memory consumption, while for parallel and distributed computing the routines for computing the snapshots should be well-parallelizable and require low communication between cores. Moreover, for the computation of the snapshots one may use a highly-optimized commercial solver or a powerful server. The $\bTheta$-sketch of the snapshots may be computed extremely efficiently in basically any computational architecture~\cite[Section 4.4]{balabanov2019galerkin}. With SRHT, sketching of $K$ snapshots requires only $\mathcal{O}(n(K m_A + m_b) \log k)$ flops, and the maintenance of the sketch requires $\mathcal{O}((K m_A + m_b)k)$ bytes of memory. By using similar arguments as in~Section\nobreakspace \ref {ssminres} it can be shown that $k = \mathcal{O}(r\log{K})$ (or $k = \mathcal{O}(r^*\log{K})$) is enough to yield with high probability an accurate approximation of the dictionary-based reduced model. With this value of $k$, the required number of flops for the computation and the amount of memory for the storage of a $\bTheta$-sketch becomes $\mathcal{O}(n(K m_A + m_b) (\log r +\log \log{K}))$, and $\mathcal{O}((K m_A + m_b)r\log{K})$, respectively.}

			\section{A posteriori certification of the sketch and solution} \label{sketchcert}
			
			Here we provide a simple, yet efficient procedure for a posteriori verification of the quality of a sketching matrix and describe {a} few scenarios where such a procedure can be employed. The proposed a posteriori certification of the sketched reduced model and its solution is probabilistic. It does not require operating with high-dimensional vectors but only with their small sketches. The quality of a certificate shall be characterized by two user specified parameters: $0<\delta^*<1$ for the probability of success and $0<\varepsilon^*<1$ for the tightness of the computed error bounds.  
			
			\subsection{Verification of an $\varepsilon$-embedding for a given subspace}
			Let $\bTheta$ be a $U \to \ell_2$ subspace embedding and $V \subset U$ be a subspace of $U$ (chosen depending on the reduced model, e.g., $V:= {R}_r(U_r; \mu)$ in~\textup {(\ref {eq:R_r})}). {Recall that the quality of $\bTheta$ can be measured by the accuracy of $\langle \cdot, \cdot \rangle^{\bTheta}_{U}$ as an approximation of $\langle \cdot, \cdot \rangle_{U}$ for vectors in $V$}.
			
			We propose to verify the accuracy of $\langle \cdot, \cdot \rangle^{\bTheta}_{U}$ simply by comparing it to an inner product $\langle \cdot, \cdot \rangle^{\bTheta^*}_{U}$ associated with a new random embedding $\bTheta^* \in \mathbb{K}^{k^*\times n}$, {where $\bTheta^*$ is chosen such that for any given vectors $\bx,\by \in V$ the concentration inequality 
				\begin{equation} \label{eq:Vconcentration}
				| \langle \bx, \by \rangle_U -  \langle \bx, \by \rangle^{\bTheta^*}_U  |\leq \varepsilon^* \| \bx \|_U \| \by \|_U  
				\end{equation}
				holds with probability at least $1-\delta^*$.} One way to ensure~\textup {(\ref {eq:Vconcentration})} is to choose $\bTheta^*$ as an $( \varepsilon^*, \delta^*, 1)$ oblivious $U \to \ell_2$ subspace embedding. A condition on the number of rows for the oblivious embedding can be either obtained theoretically (see~\cite{balabanov2019galerkin}) or chosen from the practical experience, which should be the case for embeddings constructed with SRHT matrices {(recall that they have worse theoretical guarantees than the Gaussian matrices but perform equally well in practice).} Alternatively, $\bTheta^*$ can be built by random sampling of the rows of a larger $\varepsilon$-embedding for $V$. This approach can be far more efficient than generating $\bTheta^*$ as an oblivious embedding (see~Remark\nobreakspace \ref {rmk:parentembedding}) or even essential for some computational architectures. Another requirement for $\bTheta^*$  is that it is generated independently from $\bTheta$ {and $V$}. Therefore, in the algorithms we suggest to consider $\bTheta^*$ only for the certification of the solution and nothing else.  
			
			\begin{remark} \label{rmk:parentembedding}	
				In some scenarios it can be beneficial to construct $\bTheta$ and $\bTheta^*$ by sampling their rows from a fixed realization of a larger oblivious embedding $\hat{\bTheta}$, which is guaranteed a priori to be an $\varepsilon$-embedding for $V$ with high probability. More precisely, $\bTheta$ and $\bTheta^*$ can be defined as
				\begin{equation}
				\bTheta := \bGamma \hat{\bTheta},~~ \bTheta^* := \bGamma^* \hat{\bTheta},
				\end{equation}
				where $\bGamma$ and $\bGamma^*$ are random independent sampling (or Gaussian, or SRHT) matrices. In this way, a $\hat{\bTheta}$-sketch of a reduced order model can be first precomputed and then used for efficient evaluation/update of the sketches associated with $\bTheta$ and $\bTheta^*$. This approach can be essential for the adaptive selection of the optimal size for $\bTheta$ in a limited-memory environment where only one pass (or a few passes) over the reduced basis vectors is allowed and therefore there is no chance to recompute a sketch associated with an oblivious embedding at each iteration. It may also reduce the complexity of an algorithm (especially when $\bTheta$ is constructed with SRHT matrices) by not requiring to recompute high-dimensional matrix-vector products multiple times. 
			\end{remark}

			Let $\bV$ denote a matrix whose columns form a basis of $V$. Define the sketches $\bV^\bTheta:= \bTheta \bV $ and $\bV^{\bTheta^*}:= \bTheta^* \bV $. Note that $\bV^\bTheta$ and $\bV^{\bTheta^*}$ contain as columns low-dimensional vectors and therefore are cheap to {maintain} and to operate with (unlike the matrix $\bV$).  
			
			We start with the certification of the inner product between two fixed vectors from $V$ (see~Proposition\nobreakspace \ref {thm:certvectors}).
			
			\begin{proposition} \label{thm:certvectors}
				For any two vectors $\bx, \by \in V$ {possibly depending on $\bTheta$ but independent of $\bTheta^*$}, we have that
				\begin{equation} \label{eq:certvectors}
				\begin{split}
				| \langle \bx, \by \rangle^{\bTheta^*}_{U} - \langle \bx, \by \rangle^{\bTheta}_{U} | - \frac{\varepsilon^*}{1-\varepsilon^*} \| \bx \|^{\bTheta^*}_{U}  \| \by\|^{\bTheta^*}_{U} 
				&\leq {| \langle \bx, \by \rangle_{U} - \langle \bx, \by \rangle^{\bTheta}_{U}|} \\ 
				& \leq | \langle \bx, \by \rangle^{\bTheta^*}_{U} - \langle \bx, \by \rangle^{\bTheta}_{U} | + \frac{\varepsilon^*}{1-\varepsilon^*} \| \bx \|^{\bTheta^*}_{U}  \| \by\|^{\bTheta^*}_{U}
				\end{split}
				\end{equation}
				holds with probability at least $1-4\delta^*$. 
				\begin{proof}
					See Appendix B.
				\end{proof}
			\end{proposition}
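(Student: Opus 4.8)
The plan is to insert the (uncomputable) exact inner product $\langle \bx, \by \rangle_{U}$ as an intermediary between the two computed quantities $\langle \bx, \by \rangle^{\bTheta}_{U}$ and $\langle \bx, \by \rangle^{\bTheta^*}_{U}$, to control the gap between $\langle \bx, \by \rangle_{U}$ and $\langle \bx, \by \rangle^{\bTheta^*}_{U}$ by means of the concentration inequality~\textup{(\ref{eq:Vconcentration})}, and then to close the estimate with two triangle inequalities. The first thing to record is the probabilistic setup: by hypothesis $\bTheta^*$ is generated independently of $\bTheta$ and of $V$, so that although $\bx$ and $\by$ may depend on $\bTheta$ (they are typically outputs of the sketched reduced model), conditionally on $\bTheta$ they are fixed elements of $V$; hence~\textup{(\ref{eq:Vconcentration})} may legitimately be invoked for them, the randomness then being carried entirely by $\bTheta^*$.

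Conditioning on $\bTheta$, I would apply~\textup{(\ref{eq:Vconcentration})} to the pairs $(\bx,\by)$, $(\bx,\bx)$ and $(\by,\by)$; by a union bound over these (at most four) applications of the concentration inequality all the required instances hold simultaneously with probability at least $1-4\delta^*$. From the instance for $(\bx,\bx)$ one gets $(1-\varepsilon^*)\|\bx\|_{U}^{2} \le (\|\bx\|^{\bTheta^*}_{U})^{2}$, i.e. $\|\bx\|_{U} \le (1-\varepsilon^*)^{-1/2}\|\bx\|^{\bTheta^*}_{U}$, and likewise $\|\by\|_{U} \le (1-\varepsilon^*)^{-1/2}\|\by\|^{\bTheta^*}_{U}$; substituting these into the instance for $(\bx,\by)$ yields the key bound
\[
| \langle \bx, \by \rangle_{U} - \langle \bx, \by \rangle^{\bTheta^*}_{U} | \;\le\; \varepsilon^*\,\|\bx\|_{U}\,\|\by\|_{U} \;\le\; \frac{\varepsilon^*}{1-\varepsilon^*}\,\|\bx\|^{\bTheta^*}_{U}\,\|\by\|^{\bTheta^*}_{U},
\]
whose right-hand side is computable from the small sketch $\bV^{\bTheta^*}$.

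The rest is purely deterministic. Writing $\rho := \tfrac{\varepsilon^*}{1-\varepsilon^*}\|\bx\|^{\bTheta^*}_{U}\|\by\|^{\bTheta^*}_{U}$ and applying the triangle inequality to the scalars $\langle\bx,\by\rangle_{U}$, $\langle\bx,\by\rangle^{\bTheta}_{U}$ and $\langle\bx,\by\rangle^{\bTheta^*}_{U}$, the key bound immediately gives
\[
| \langle \bx, \by \rangle_{U} - \langle \bx, \by \rangle^{\bTheta}_{U} | \;\le\; | \langle \bx, \by \rangle^{\bTheta^*}_{U} - \langle \bx, \by \rangle^{\bTheta}_{U} | + \rho,
\]
and, exchanging the roles of $\langle\bx,\by\rangle_{U}$ and $\langle\bx,\by\rangle^{\bTheta}_{U}$, also $| \langle \bx, \by \rangle^{\bTheta^*}_{U} - \langle \bx, \by \rangle^{\bTheta}_{U} | - \rho \le | \langle \bx, \by \rangle_{U} - \langle \bx, \by \rangle^{\bTheta}_{U} |$. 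Together these two inequalities are exactly~\textup{(\ref{eq:certvectors})}; and since the conditional probability bound is uniform over the realizations of $\bTheta$, the same bound holds unconditionally.

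There is no analytic difficulty to overcome here; the only step requiring care is the measurability/independence bookkeeping of the first paragraph, namely that $\bx$ and $\by$ are determined before $\bTheta^*$ is drawn so that~\textup{(\ref{eq:Vconcentration})} applies to them as to fixed vectors — which is precisely why $\bTheta^*$ is assumed to be generated independently of $\bTheta$ and $V$. As in Proposition~\ref{thm:skminresopt}, none of this involves the properties of $\bA(\mu)$, and the only restriction on $\varepsilon^*$ used is $\varepsilon^* < 1$.
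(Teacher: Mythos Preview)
Your proof is correct and follows essentially the same route as the paper's: invoke the concentration inequality~\textup{(\ref{eq:Vconcentration})} for the cross term $\langle\bx,\by\rangle$ and for the two norms, replace the uncomputable $\|\bx\|_U,\|\by\|_U$ by their $\bTheta^*$-sketched versions via $(1-\varepsilon^*)\|\cdot\|_U^2\le(\|\cdot\|_U^{\bTheta^*})^2$, and finish with the triangle inequality. The only cosmetic difference is that the paper obtains the cross-term bound $|\langle\bx,\by\rangle_U-\langle\bx,\by\rangle_U^{\bTheta^*}|\le\varepsilon^*\|\bx\|_U\|\by\|_U$ by invoking Proposition~\ref{thm:errorbounds_r} (whose proof, via the parallelogram identity, spends $2\delta^*$), which is why the total budget is $4\delta^*$ rather than the $3\delta^*$ your direct use of~\textup{(\ref{eq:Vconcentration})} would give; your ``at most four'' remark covers this.
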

			The error bounds in~Proposition\nobreakspace \ref {thm:certvectors} can be computed from the sketches of $\bx$ and $\by$, which may be efficiently evaluated from $\bV^\bTheta$ and $\bV^{\bTheta^*}$ and the coordinates of $\bx$ and $\by$ associated with $\bV$, with no operations on high-dimensional vectors. 
			A certification for several pairs of vectors should be obtained using a union bound for the probability of success. By replacing $\bx$ by $\bR_U^{-1} \bx'$ and $\by$ by $\bR_U^{-1} \by'$ in~Proposition\nobreakspace \ref {thm:certvectors} and using definition~\textup {(\ref {eq:thetadef})} one can derive a certification of the dual inner product  $\langle \cdot, \cdot \rangle^{\bTheta}_{U'}$ for vectors $\bx'$ and $\by'$ in $V':= \{ \bR_U \bx : \bx \in V\}$. 
			
			In general, the quality of an approximation with a $\bTheta$-sketch of a reduced model should be characterized by the accuracy of $\langle \cdot, \cdot \rangle^{\bTheta}_{U}$ for the whole subspace $V$. Let $\omega$ be the minimal value for $\varepsilon$ such that $\bTheta$ satisfies an $\varepsilon$-embedding property for $V$. Now, we address the problem of computing an a posteriori upper bound $\bar{\omega}$ for $\omega$ from the sketches $\bV^\bTheta$ and $\bV^{\bTheta^*}$ and we provide conditions to ensure quasi-optimality of $\bar{\omega}$.
			
			\begin{proposition} \label{thm:omegaUB}
				For a fixed realization of $\bTheta^*$, let us define 
				\begin{equation} \label{eq:omegaUB}
				\bar{\omega} := \max \left \{ 1- (1-\varepsilon^*) \min_{\bx \in V / \{ \bnull \}} \left (\frac{\| \bx\|^{\bTheta}_U}{\| \bx\|^{\bTheta^*}_U} \right )^2, (1+\varepsilon^*) \max_{\bx \in V / \{ \bnull \}} \left (\frac{\| \bx\|^{\bTheta}_U}{\| \bx\|^{\bTheta^*}_U}\right )^2 -1  \right \}.
				\end{equation}
				If $\bar{\omega}<1$, then $\bTheta$ is guaranteed to be a $U \to \ell_2$ $\bar{\omega}$-subspace embedding for $V$ with probability at least $1-\delta^*$.
				\begin{proof}
					See Appendix B.
				\end{proof}
			\end{proposition}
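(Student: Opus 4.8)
The plan is to reduce the statement to the behaviour of $\bTheta$ and $\bTheta^*$ on a single, carefully chosen test vector. Recall that $\omega$ denotes the smallest $\varepsilon$ for which $\bTheta$ is an $\varepsilon$-embedding for $V$. I would start from the standard fact that, since $(\bx,\by)\mapsto\langle\bx,\by\rangle_U-\langle\bx,\by\rangle^\bTheta_U$ is a Hermitian form on $V$, its operator norm equals its numerical radius, so that
\[
\omega=\max_{\bx\in V\backslash\{\bnull\}}\bigl|\,1-\rho(\bx)\,\bigr|,\qquad \rho(\bx):=\bigl(\|\bx\|^\bTheta_U/\|\bx\|_U\bigr)^2 ;
\]
that is, the supremum over pairs collapses onto the diagonal. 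Because $\dim V<\infty$ this maximum is attained at some $\bx^*\in V\backslash\{\bnull\}$, and---this is the point on which the whole argument hinges---$\bx^*$ depends only on $\bTheta$ and $V$, not on $\bTheta^*$. Moreover $\omega=1-\rho(\bx^*)$ when $\rho(\bx^*)\le 1$ and $\omega=\rho(\bx^*)-1$ otherwise.

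Next I would invoke the stated independence of $\bTheta^*$ from $\bTheta$ and $V$: conditioning on $(\bTheta,V)$ freezes $\bx^*$, so the concentration inequality~\eqref{eq:Vconcentration} applies with $\bx=\by=\bx^*$ and gives, with probability at least $1-\delta^*$,
\[
(1-\varepsilon^*)\|\bx^*\|_U^2\ \le\ (\|\bx^*\|^{\bTheta^*}_U)^2\ \le\ (1+\varepsilon^*)\|\bx^*\|_U^2 .
\]
On this event I would split into two cases. If $\rho(\bx^*)\le 1$, the lower bound gives $\bigl(\|\bx^*\|^\bTheta_U/\|\bx^*\|^{\bTheta^*}_U\bigr)^2\le\rho(\bx^*)/(1-\varepsilon^*)$; bounding the left-hand side below by $\min_{\bx\in V\backslash\{\bnull\}}\bigl(\|\bx\|^\bTheta_U/\|\bx\|^{\bTheta^*}_U\bigr)^2$ and rearranging yields $1-(1-\varepsilon^*)\min_{\bx}(\cdot)^2\ge 1-\rho(\bx^*)=\omega$, hence $\bar\omega\ge\omega$ by the definition~\eqref{eq:omegaUB} of $\bar\omega$. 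If $\rho(\bx^*)>1$, the symmetric computation using the upper bound and $\max_{\bx\in V\backslash\{\bnull\}}\bigl(\|\bx\|^\bTheta_U/\|\bx\|^{\bTheta^*}_U\bigr)^2$ gives $(1+\varepsilon^*)\max_{\bx}(\cdot)^2-1\ge\rho(\bx^*)-1=\omega$, hence again $\bar\omega\ge\omega$.

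Combining the two cases, on an event of probability at least $1-\delta^*$ we have $\omega\le\bar\omega$; together with the hypothesis $\bar\omega<1$ this shows that $\bTheta$ satisfies~\eqref{eq:epsilon_embedding} on $V$ with constant $\bar\omega$, i.e.\ it is a $U\to\ell_2$ $\bar\omega$-subspace embedding for $V$, which is the claim. The step I expect to be the main obstacle is the first one: recognising that $\omega$ is realised at a single vector $\bx^*$ that is independent of $\bTheta^*$, so that the per-vector concentration guarantee~\eqref{eq:Vconcentration} suffices and no union bound---which would cost a factor in the failure probability---is required; everything after that is routine rearrangement of the Rayleigh quotients entering~\eqref{eq:omegaUB}.
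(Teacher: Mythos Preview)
Your proof is correct and follows essentially the same approach as the paper's: identify the extremal vector $\bx^*$ realising $\omega$ (which depends only on $\bTheta$ and $V$), apply the concentration inequality~\eqref{eq:Vconcentration} at that single vector, and rearrange the Rayleigh quotients to bound $\omega$ by the appropriate branch of~\eqref{eq:omegaUB}. Your treatment is slightly more complete in that you spell out the Hermitian-form/numerical-radius justification for reducing the pairwise condition in Definition~\ref{def:epsilon_embedding} to a diagonal one, and you handle both cases $\rho(\bx^*)\le 1$ and $\rho(\bx^*)>1$ explicitly, whereas the paper only works out the first case and leaves the second as ``similar''.
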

			It follows that if $\bar{\omega}<1$ then it is an upper bound for $\omega$ with high probability. Assume that $\bV^\bTheta$ and $\bV^{\bTheta^*}$ have full ranks. Let $\bT^*$ be the matrix such that $\bV^{\bTheta^*} \bT^*$ is orthogonal (with respect to $\ell_2$-inner product). Such a matrix can be computed with {a} QR factorization. Then $\bar{\omega}$ defined~in~\textup {(\ref {eq:omegaUB})} can be obtained from the following relation
			\begin{equation} \label{eq:compute_omega}
			\bar{\omega} = \max \left \{ 1-(1-\varepsilon^*) \sigma^2_{\min}, (1+\varepsilon^*) \sigma^2_{\max} -1 \right \},
			\end{equation}
			where $\sigma_{\min}$ and $\sigma_{\max}$ are the minimal and the maximal singular 
			values of the small matrix $\bV^\bTheta \bT^*$. 
			
			We have that $\bar{\omega} \geq \varepsilon^*.$ The value for $\varepsilon^*$ may be selected an order of magnitude less than $\omega$ with no considerable impact on the computational cost, therefore in practice the effect of $\varepsilon^*$ on $\bar{\omega}$  can be considered {to be} negligible. Proposition\nobreakspace \ref {thm:omegaUB} implies that $\bar{\omega}$ is an upper bound of $\omega$ with high probability. A guarantee of effectivity of $\bar{\omega}$ (i.e., its closeness to $\omega$), however, has not been yet provided. To do so we shall need a stronger assumption on $\bTheta^*$ than~\textup {(\ref {eq:Vconcentration})}. 
			\begin{proposition}	\label{thm:omegaUBoptimality}
				If the realization of $\bTheta^*$ is a $U \to \ell_2$ $\omega^*$-subspace embedding for $V$, then $\bar{\omega}$ (defined by~\textup {(\ref {eq:omegaUB})}) satisfies	 
				\begin{equation} \label{eq:omegaUBoptimality}
				\bar{\omega} \leq  \frac{1+\varepsilon^*}{1-\omega^*} (1+\omega) -1.
				\end{equation}
				\begin{proof}
					See Appendix B.
				\end{proof}
			\end{proposition}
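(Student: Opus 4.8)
The plan is to bound the ratio $\|\bx\|^{\bTheta}_U / \|\bx\|^{\bTheta^*}_U$ appearing in the definition \eqref{eq:omegaUB} of $\bar\omega$ by comparing both the numerator and the denominator to the exact norm $\|\bx\|_U$. Since $\bTheta$ is (by assumption, or by the hypothesis of whatever ensures $\bar\omega<1$) an $\omega$-embedding for $V$ and $\bTheta^*$ is an $\omega^*$-embedding for $V$, we have for every $\bx\in V\setminus\{\bnull\}$ the two-sided estimates $(1-\omega)\|\bx\|_U^2 \leq (\|\bx\|^{\bTheta}_U)^2 \leq (1+\omega)\|\bx\|_U^2$ and $(1-\omega^*)\|\bx\|_U^2 \leq (\|\bx\|^{\bTheta^*}_U)^2 \leq (1+\omega^*)\|\bx\|_U^2$, obtained by applying the $\varepsilon$-embedding inequality \eqref{eq:epsilon_embedding} with $\by=\bx$. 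Dividing, one gets
\begin{equation*}
\frac{1-\omega}{1+\omega^*} \;\leq\; \left(\frac{\|\bx\|^{\bTheta}_U}{\|\bx\|^{\bTheta^*}_U}\right)^2 \;\leq\; \frac{1+\omega}{1-\omega^*},
\end{equation*}
uniformly over $\bx\in V\setminus\{\bnull\}$ (this uses $\omega^*<1$ so the denominators are positive).

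Next I would substitute these bounds into \eqref{eq:omegaUB}. For the second term in the $\max$, $(1+\varepsilon^*)\max_{\bx}\big(\|\bx\|^{\bTheta}_U/\|\bx\|^{\bTheta^*}_U\big)^2 - 1 \leq (1+\varepsilon^*)\frac{1+\omega}{1-\omega^*} - 1$, which is exactly the right-hand side of \eqref{eq:omegaUBoptimality}. For the first term, $1 - (1-\varepsilon^*)\min_{\bx}\big(\|\bx\|^{\bTheta}_U/\|\bx\|^{\bTheta^*}_U\big)^2 \leq 1 - (1-\varepsilon^*)\frac{1-\omega}{1+\omega^*}$, and one needs to check that this is also bounded by $\frac{1+\varepsilon^*}{1-\omega^*}(1+\omega)-1$. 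This is a routine algebraic comparison: it suffices to verify $1 - (1-\varepsilon^*)\frac{1-\omega}{1+\omega^*} \leq (1+\varepsilon^*)\frac{1+\omega}{1-\omega^*} - 1$, i.e. $2 \leq (1+\varepsilon^*)\frac{1+\omega}{1-\omega^*} + (1-\varepsilon^*)\frac{1-\omega}{1+\omega^*}$; since each of the two fractions $\frac{1+\omega}{1-\omega^*}$ and $\frac{1-\omega}{1+\omega^*}$ is compared against $1$ and the expression is a convex-type combination, the bound holds (for instance because $\frac{1+\omega}{1-\omega^*}\geq 1$ and, more carefully, because the cross terms work out nonnegatively — this is where a short explicit computation is warranted but not difficult). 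Taking the maximum of the two bounded terms then yields \eqref{eq:omegaUBoptimality}.

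The only genuinely delicate point is making sure the first term in the $\max$ is dominated by the claimed bound rather than being larger; the symmetric-looking roles of $\omega$ and $\omega^*$ versus $\varepsilon^*$ mean one must be slightly careful with signs, but since $\omega^* < 1$ (as $\bTheta^*$ is an honest $\omega^*$-embedding) all denominators stay positive and the comparison is elementary. No probabilistic argument is needed here — this proposition is purely deterministic given the embedding qualities of the fixed realizations of $\bTheta$ and $\bTheta^*$; the probability statement was already absorbed into Proposition \ref{thm:omegaUB} and into the hypothesis that $\bTheta^*$ is an $\omega^*$-embedding.
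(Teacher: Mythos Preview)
Your approach is essentially the same as the paper's: bound the ratio $(\|\bx\|^{\bTheta}_U/\|\bx\|^{\bTheta^*}_U)^2$ via the two embedding inequalities, substitute into \eqref{eq:omegaUB}, and then argue that the second term in the $\max$ dominates. The paper simply asserts the final equality $\max\{1-(1-\varepsilon^*)\frac{1-\omega}{1+\omega^*},\,(1+\varepsilon^*)\frac{1+\omega}{1-\omega^*}-1\}=(1+\varepsilon^*)\frac{1+\omega}{1-\omega^*}-1$ without further comment, so your outline is actually more explicit about this step than the original.

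To replace the hand-waving at the end, here is the short computation you flag: writing $a=\frac{1+\omega}{1-\omega^*}$ and $b=\frac{1-\omega}{1+\omega^*}$, one has $a+b=\frac{2(1+\omega\omega^*)}{1-(\omega^*)^2}\geq 2$ and $a-b=\frac{2(\omega+\omega^*)}{1-(\omega^*)^2}\geq 0$, hence $(1+\varepsilon^*)a+(1-\varepsilon^*)b=(a+b)+\varepsilon^*(a-b)\geq 2$, which is exactly the needed inequality.
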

			
			If $\bTheta^*$ is a $(\omega^*, \gamma^*, \mathrm{dim}({V}))$ oblivious $U \to \ell_2$ subspace embedding, then the condition on $\bTheta^*$ in~Proposition\nobreakspace \ref {thm:omegaUBoptimality} is satisfied with probability at least $1-\gamma^*$ (for some user-specified value $\gamma^*$). 
			Therefore, a matrix $\bTheta^*$ of moderate size should yield a very good upper bound $\bar{\omega}$ of $\omega$. Moreover, if $\bTheta$ and $\bTheta^*$ are drawn from the same distribution, then $\bTheta^*$ can be expected to be an $\omega^*$-embedding for $V$ with $\omega^* = \mathcal{O}( \omega)$ with high probability. Combining this consideration with~Proposition\nobreakspace \ref {thm:omegaUBoptimality} we deduce that a sharp upper bound should be obtained for {some} $k^* \leq k$. Therefore, in the algorithms one may readily consider $k^*:=k$. If pertinent, a better value for $k^*$ can be selected adaptively, at each iteration increasing $k^*$ by a constant factor until the desired tolerance or a stagnation of $\bar{\omega}$ is reached. 
			
			\subsection{Certification of a sketch of a reduced model and its solution} 
			
			The results of~Propositions\nobreakspace \ref {thm:certvectors} and\nobreakspace  \ref {thm:omegaUB} can be employed for certification of a sketch of a reduced model and its solution. They can also be used for adaptive selection of the number of rows of a random sketching matrix to yield an accurate approximation of the reduced model. Thereafter we discuss several practical applications of the methodology described above.  
			
			\subsubsection*{Approximate solution}
			
			Let $\bu_r(\mu) \in U$ be an approximation of $\bu(\mu)$. The accuracy of $\bu_r(\mu)$ can be measured with the residual error $\| \br(\bu_r(\mu); \mu) \|_{U'}$, which can be efficiently estimated by  
			$$ \| \br(\bu_r(\mu); \mu) \|_{U'} \approx \| \br(\bu_r(\mu); \mu) \|^\bTheta_{U'}.$$
			The certification of such estimation can be derived from~Proposition\nobreakspace \ref {thm:certvectors}
			choosing $\bx = \by := \bR_U^{-1} \br(\bu_r(\mu); \mu)$ and using definition~\textup {(\ref {eq:thetadef})} of $\| \cdot \|^\bTheta_{U'}$.
			
			For applications, which involve computation of snapshots over the training set (e.g., approximate POD or greedy algorithm with the exact error indicator), one should be able to efficiently precompute the sketches of $\bu(\mu)$. Then the error $\| \bu(\mu) - \bu_r(\mu) \|_U$ can be efficiently estimated by 
			$$\| \bu(\mu) - \bu_r(\mu) \|_U \approx \| \bu(\mu) - \bu_r(\mu) \|^\bTheta_U. $$
			Such an estimation can be certified with~Proposition\nobreakspace \ref {thm:certvectors} choosing $\bx = \by := \bu(\mu) - \bu_r(\mu)$.

			\subsubsection*{Minimal residual projection}
			By~Proposition\nobreakspace \ref {thm:skminresopt}, the quality of the $\bTheta$-sketch of a subspace $U_r$ for approximating the minres projection for a given parameter value can be characterized by the lowest value $\omega$ for $\varepsilon$ such that $\bTheta$ satisfies the $\varepsilon$-embedding property for subspace $V := R_r(U_r; \mu)$, defined in~\textup {(\ref {eq:R_r})}. The upper bound $\bar{\omega}$ of such $\omega$ can be efficiently computed using~\textup {(\ref {eq:omegaUB})}. The verification of {a} $\bTheta$-sketch for all parameter values in $\mathcal{P}$, simultaneously, can be performed by considering a subspace $V$ in~\textup {(\ref {eq:omegaUB})}, which contains $\bigcup_{\mu \in \mathcal{P}} R_r(U_r; \mu)$.
			
			\subsubsection*{Dictionary-based approximation}
			For each parameter value, the quality of $\bTheta$ for dictionary-based approximation defined in~\textup {(\ref {eq:sksparseminresproj})} can be characterized by the quality of the sketched minres projection associated with a subspace $U_r(\mu)$, which can be verified by computing $\bar{\omega}$ in~\textup {(\ref {eq:omegaUB})} associated with $V := R_r(U_r(\mu); \mu)$.

			\subsubsection*{Output quantity}
			{In Appendix A we provide a way for estimating the output quantity $s_r(\mu) =  \langle \bl(\mu), \bu_r(\mu) \rangle$, with $\bl(\mu) \in U'$. More specifically $s_r(\mu)$ can be efficiently estimated by $s^\star_r(\mu) =\langle \bl(\mu), \bw_p(\mu) \rangle +   \langle \bR^{-1}_U \bl(\mu), \bu_r(\mu) - \bw_p (\mu) \rangle^\bTheta_U$, where $\bw_p(\mu)$ is a projection of $\bu_r(\mu)$ on a new reduced basis.}  We have
			$$ |s_r(\mu) - s^\star_r(\mu)| = | \langle \bR^{-1}_U \bl(\mu), \bu_r(\mu) - \bw_p (\mu) \rangle_U -  \langle \bR^{-1}_U \bl(\mu), \bu_r(\mu) - \bw_p (\mu) \rangle^\bTheta_U|,$$
			therefore the quality of $s^\star_r(\mu)$ may be certified by~Proposition\nobreakspace \ref {thm:certvectors} with $\bx = \bR_U^{-1} \bl(\mu) $, $\by = \bu_r(\mu) - \bw_p (\mu)$.

			\subsubsection*{Adaptive selection of the size for a random sketching matrix}
			
			When no a priori bound for the size of $\bTheta$ sufficient to yield an accurate sketch of a reduced model is available, or when the bounds are pessimistic, the sketching matrix should be selected adaptively. At each iteration, if the certificate indicates a poor quality of a $\bTheta$-sketch for approximating the solution (or the error) on $\mathcal{P}_\mathrm{train} \subseteq \mathcal{P}$, one can improve the accuracy of the sketch by adding extra rows to $\bTheta$. In the analysis, the embedding $\bTheta^*$ used for certification was assumed to be independent of $\bTheta$, consequently a new realization of $\bTheta^*$ should be sampled after each decision to improve $\bTheta$ has been made. To save computational costs, the previous realizations of $\bTheta^*$ and the associated $\bTheta^*$-sketches can be readily {recycled} as parts of the updates for $\bTheta$ and the $\bTheta$-sketch. 
			
			We finish with a practical application of~Propositions\nobreakspace \ref {thm:omegaUB} and\nobreakspace  \ref {thm:omegaUBoptimality}. Consider a situation where one is given a class of random embeddings (e.g., oblivious subspace embeddings mentioned in~Section\nobreakspace \ref {randsk} or the embeddings constructed with random sampling of rows of an $\varepsilon$-embedding as in~Remark\nobreakspace \ref {rmk:parentembedding}) and one is interested in generating an $\varepsilon$-embedding $\bTheta$ (or rather computing the associated sketch), {with a user-specified accuracy $\varepsilon \leq \tau$,} for $V$ (e.g., a subspace containing $\cup_{\mu \in \mathcal{P}} R_r(U_r; \mu)$) with nearly optimal number of rows. Moreover, we {consider the cases when} no bound {for} the size of matrices to yield an $\varepsilon$-embedding is available or when the bound is pessimistic. It is only known that matrices with more than $k_0$ rows satisfy~\textup {(\ref {eq:Vconcentration})}. This condition could be derived theoretically (as for Gaussian matrices) or {deduced} from practical experience (for SRHT). {The} matrix $\bTheta$ can be readily generated adaptively using $\bar{\omega}$ defined by~\textup {(\ref {eq:omegaUB})} as an error indicator (see~Algorithm\nobreakspace \ref {alg:adaptive_embedding}). {It directly follows by a union bound argument that $\bTheta$ generated in~Algorithm\nobreakspace \ref {alg:adaptive_embedding} is an $\varepsilon$-embedding for $V$, with $\varepsilon \leq \tau$, with probability at  least $1-t \delta^*$, where $t$ is the number of iterations taken by the algorithm.} 
			
			\begin{algorithm}[h] \caption{Adaptive selection of the number of rows for $\bTheta$} \label{alg:adaptive_embedding}
				\begin{algorithmic}
					\STATE{\textbf{Given:} $k_0$, $\bV$, $\tau > \varepsilon^*$.}
					\STATE{\textbf{Output}:  $\bV^\bTheta$, $\bV^{\bTheta^*}$}
					\STATE{1. Set $k=k_0$ and $\bar{\omega} =\infty$.}
					\WHILE{$\bar{\omega} > \tau$} 
					\STATE{2. Generate $\bTheta$ and $\bTheta^*$ with $k$ rows and evaluate $\bV^\bTheta:= \bTheta \bV$ and $\bV^{\bTheta^*}:=\bTheta^* \bV$.}
					\STATE{3. Use~\textup {(\ref {eq:compute_omega})} to compute $\bar{\omega}$.} 
					\STATE{4. Increase $k$ by a constant factor.}
					\ENDWHILE
				\end{algorithmic}
			\end{algorithm}
			
			To improve the efficiency, at each iteration of~Algorithm\nobreakspace \ref {alg:adaptive_embedding} we could select the number of rows for $\bTheta^*$ adaptively instead of choosing it equal to $k$. In addition, the embeddings from previous iterations can be considered as parts of $\bTheta$ at further iterations. 
			
			\section{Numerical experiments} \label{numex}
			This section is devoted to experimental validation of the methodology as well as realization of its great practical potential. The numerical tests were carried out on two benchmark problems that are difficult to tackle with the standard projection-based MOR methods due to a high computational cost and issues with numerical stability of the computation (or minimization) of the residual norm, or bad approximability of the solution manifold with a low-dimensional space. 
			
			In all the experiments we used oblivious $U \to \ell_2$ embeddings of the form  $$\bTheta:=  \bOmega \bQ,$$ where $\bQ$  was taken as the (sparse) transposed Cholesky factor of $\bR_U$ and $\bOmega$ as a SRHT matrix. The random embedding $\bGamma$ used for the online efficiency was also taken as  SRHT.    
			Moreover, for simplicity in all the experiments the coefficient $\eta(\mu)$ for the error estimation was chosen as~$1$. 
			
			The experiments were executed on an Intel\textsuperscript{\textregistered} Core{\texttrademark} i7-7700HQ 2.8GHz CPU, with 16.0GB RAM memory using Matlab\textsuperscript{\textregistered} {R2017b}.
			
			\subsection{Acoustic invisibility cloak} \label{cloak}
			The first numerical example is inspired by the development of invisibility cloaking~\cite{cheng2008multilayer,chen2010acoustic}. It consists {of} an acoustic wave scattering in $2$D with a perfect scatterer covered in an invisibility cloak composed of layers of homogeneous isotropic materials. The geometry of the problem is depicted in~Figure\nobreakspace \ref {fig:Ex1_intial_problem_a}. The cloak consists of $32$ layers of equal thickness $1.5625\mathrm{~cm}$ each constructed with $4$ sublayers of equal thickness of alternating materials: mercury (a heavy liquid) followed by a light liquid. The properties (density and bulk modulus) of the light liquids are chosen to minimize the visibility of the scatterer for the frequency band $[7.5, 8.5]\mathrm{~kHz}$. The associated boundary value problem with first order absorbing boundary conditions is the following     
			\begin{equation} \label{eq:BVP1}
			\left \{
			\begin{array}{rll}
			\nabla \cdot ( \rho^{-1} \nabla u) + \rho^{-1} \kappa^2 u &= 0,~~  & \textup{in } \Omega \\
			(j \kappa -\frac{1}{2 R_{\Omega}}) u + \frac{\partial u}{\partial {n}}  &=(j \kappa -\frac{1}{2 R_{\Omega}}) u^{in} + \frac{\partial u^{in}}{\partial {n}},~~ & \textup{on } \Gamma \\
			\frac{\partial u}{\partial {n}} &= 0,~~ & \textup{on } \Gamma_{s},
			\end{array}
			\right.
			\end{equation} 
			where $j=\sqrt{-1}$, $u = u^{in}+u^{sc}$ is the total pressure, with $u^{in} = \exp(-j \kappa (y-4))~\mathrm{Pa} \cdot \mathrm{m}$ being the pressure of the incident plane wave and $u^{sc}$ being the pressure of the scattered wave,  $\rho$ is the material's density, $\kappa = \frac{2 \pi f}{c}$ is the wave number,  $c = \sqrt{\frac{b}{\rho}}$ is the speed of sound and $b$ is the bulk modulus. The background material is chosen as water having density $\rho = \rho_0 = 997~\mathrm{kg/m^3} $ and bulk modulus $b = b_0 = 2.23~\mathrm{GPa}$.  
			For the frequency $f = 8~\mathrm{kHz}$ the associated wave number of the background is $\kappa=\kappa_0=33.6~ \mathrm{m}^{-1}$.  The $i$-th layer of the cloak (enumerated starting from the outermost layer) is composed of $4$ alternating layers of mercury with density $\rho = \rho_m =13500 ~\mathrm{kg/m^3}$ and bulk modulus $b = b_m = 28~\mathrm{GPa}$ and a light liquid with density $\rho=\rho_i$ and bulk modulus $b =b_i$ given in~Table\nobreakspace \ref {tab:materials}. The light liquids from~Table\nobreakspace \ref {tab:materials} can in practice be obtained, for instance, with the pentamode mechanical metamaterials~\cite{bertoldi2017flexible,kadic2014pentamode}. 
			\begin{table}[tbhp]
				\caption{The properties, density in~$\mathrm{kg/m^3}$ and bulk modulus in~${\mathrm{GPa}}$, of the light liquids in the cloak.}
				\label{tab:materials}
				\centering	
				\scalebox{0.9}{
					\begin{tabular}{||l|l|l||l|l|l||l|l|l||l|l|l||} \hline
						$i$	& $\rho_i$  & $b_i$ & $i$	& $\rho_i$ & $b_i$ & $i$	& $\rho_i$ & $b_i$ & $i$	& $\rho_i$ & $b_i$ \\ \hline	    
						1& 231   & 0.483  &	9&   179   & 0.73  & 17	    &  56.1 & 0.65       & 25	& 9   &1.34      \\ \hline
						2& 121   & 0.328  &10&   166   & 0.78    & 18	& 59.6  & 0.687       & 26	&9    & 2.49      \\ \hline
						3& 162   & 0.454  &	11&    150   & 0.745   & 19	& 40.8  &  0.597       & 27	& 9   & 2.5       \\ \hline
						4& 253  &  0.736  &	12&  140   & 0.802   & 20	    & 32.1  &0.682       & 28	& 9   &2.5      \\ \hline
						5& 259   & 0.767  &	13&  135   & 0.786   & 21   	& 22.5  & 0.521       & 29	& 9   & 0.58      \\ \hline
						6& 189   & 0.707  &	14&  111   &  0.798   & 22	& 15.3  &  0.6       & 30	&  9.5  & 1.91      \\ \hline
						7& 246   & 0.796  &	15&   107   & 0.8   & 23	    & 10    &  0.552       & 31	&  9.31 &  0.709       \\ \hline
						8& 178   & 0.739  &	16&  78   & 0.656   & 24	    &  9  &  1.076       & 32	&  9   &2.44     \\ \hline		
					\end{tabular}
				}
			\end{table}

			The last $10$ layers contain liquids with small densities that can be subject to imperfections during the manufacturing process. Moreover, the external conditions (such as temperature and pressure) may also affect the material's properties. We then consider a characterization of the impact of small perturbations of the density and the bulk modulus of the light liquids in the last $10$  layers on the quality of the cloak in the frequency regime $[7.8, 8.2]\mathrm{~kHz}$. Assuming that the density and the bulk modulus may vary by $2.5\%$, the corresponding parameter set is  $$\mathcal{P} = \underset{23\leq i \leq 32}{\times}[0.975\rho_i, 1.025 \rho_i]\underset{23\leq i \leq 32}{\times}[0.975 b_i, 1.025 b_i]~\times~[7.8~\mathrm{kHz}, 8.2~\mathrm{kHz}].$$
			Note that in this case $\mathcal{P} \subset \mathbb{R}^{21}$. 
			
			\begin{figure}[h!]
				\centering
				\begin{minipage}[c]{.4\textwidth}
					\centering
					\includegraphics[width=.96\textwidth]{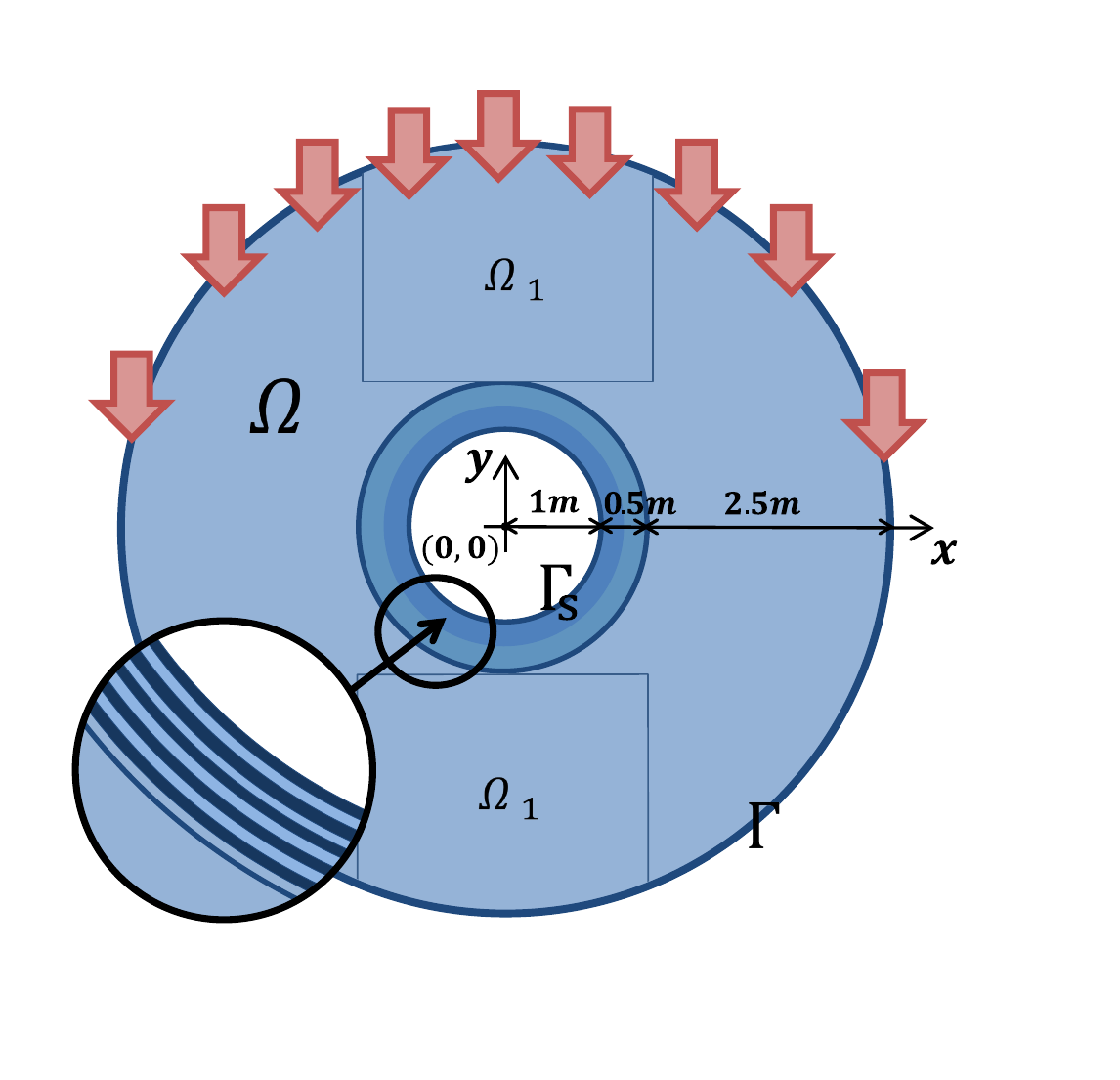}
				\end{minipage}%
				\begin{minipage}[c]{.4\textwidth}
					\centering
					\includegraphics[width=.8\textwidth]{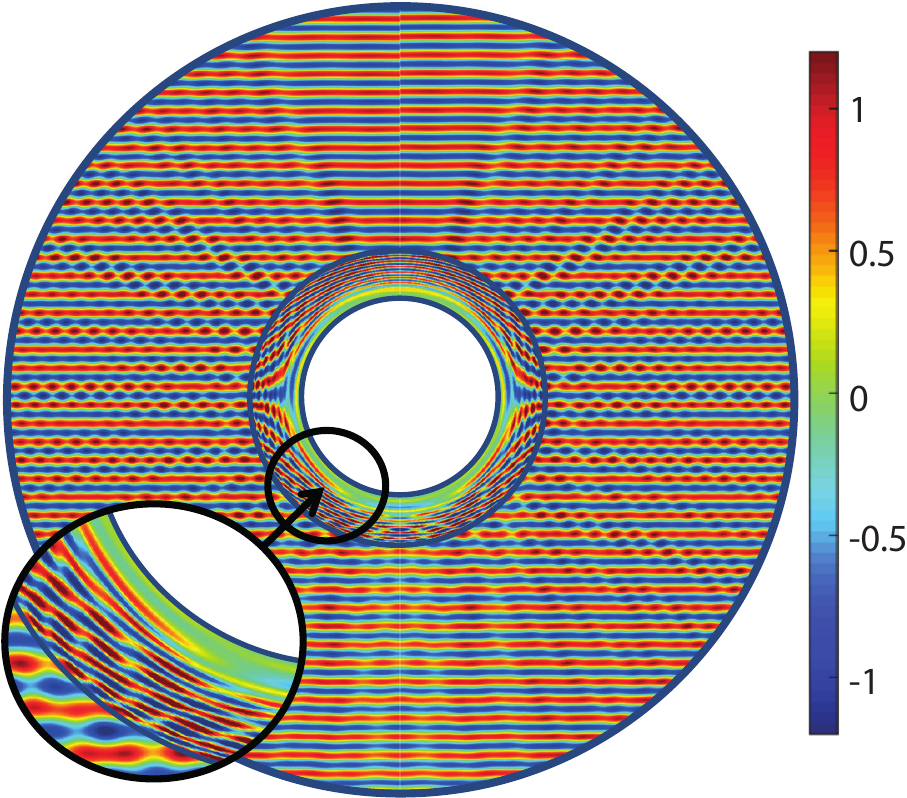}
				\end{minipage} \\ 
				\begin{minipage}[b]{.4\textwidth}
					\centering
					\subcaption{Geometry}
					\label{fig:Ex1_intial_problem_a}
				\end{minipage}%
				\begin{minipage}[b]{.4\textwidth}
					\subcaption{Unperturbed cloak}
					\label{fig:Ex1_intial_problem_b}
				\end{minipage} 
				\begin{minipage}[c]{.4\textwidth}
					\centering
					\includegraphics[width=.8\textwidth]{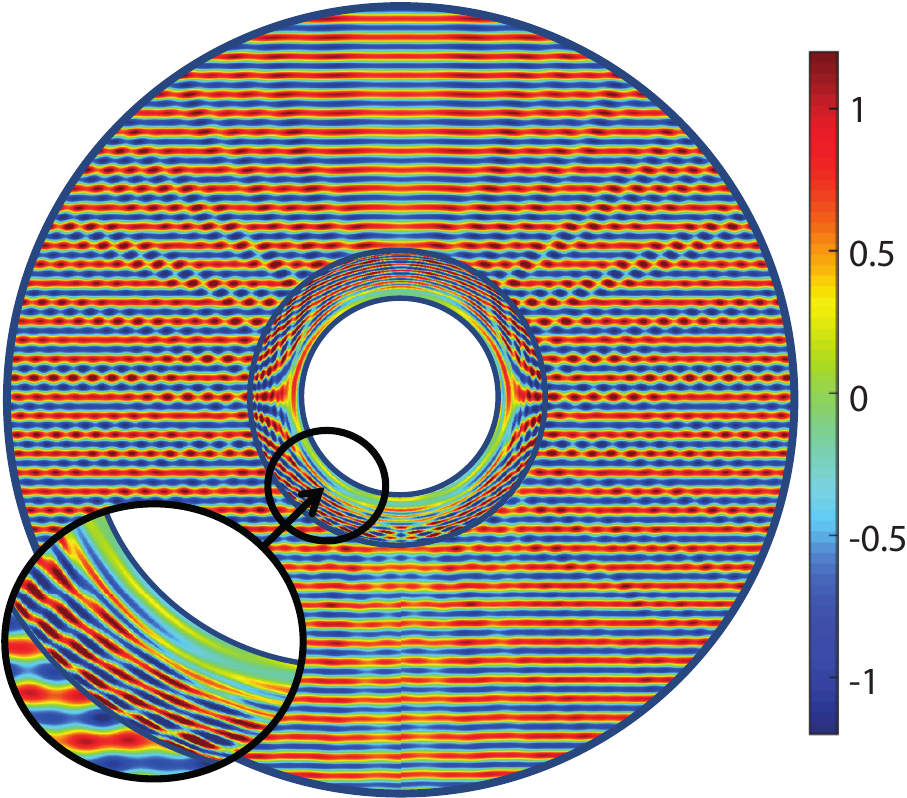}
				\end{minipage}%
				\begin{minipage}[c]{.4\textwidth}
					\centering
					\includegraphics[width=.8\textwidth]{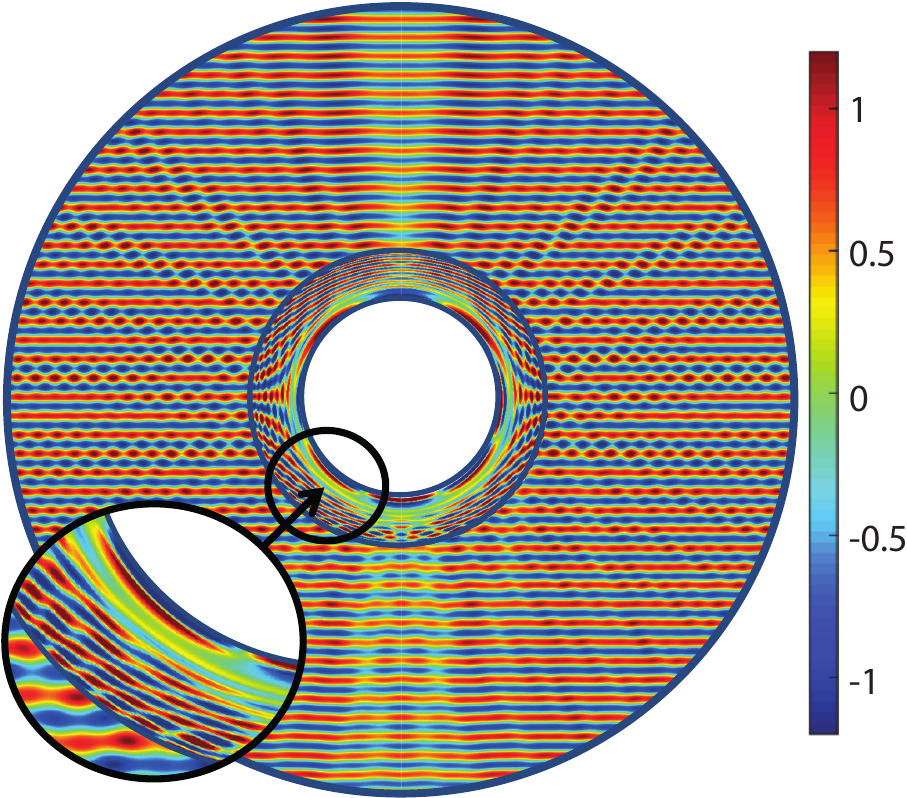}
				\end{minipage} \\ 
				\begin{minipage}[b]{.4\textwidth}
					\centering
					\subcaption{Random snapshot}
					\label{fig:Ex1_intial_problem_c}
				\end{minipage}%
				\begin{minipage}[b]{.4\textwidth}
					\subcaption{Random snapshot}
					\label{fig:Ex1_intial_problem_d}
				\end{minipage}
				\caption{(a) Geometry of the invisibility cloak benchmark. (b) The real component of $u$ in $\mathrm{Pa} \cdot \mathrm{m}$ for the parameter value $\mu \in \mathcal{P}$ corresponding to~Table\nobreakspace \ref {tab:materials} and frequency $f = 8~\mathrm{kHz}$. (c)-(d) The real component of $u$ in $\mathrm{Pa}\cdot \mathrm{m}$ for two random samples from $\mathcal{P}$ with $f = 7.8~\mathrm{kHz}$.}
				\label{fig:Ex1_intial_problem}
			\end{figure}

			The quantity of interest is chosen to be the following
			$$s(\mu) = l(u(\mu); \mu) {=} \| u(\mu) - u^{in}(\mu)\|^2_{L^2(\Omega_1)}/b_0 = \| u^{sc}(\mu)\|^2_{L^2(\Omega_1)}/b_0,$$ 
			which represents the (rescaled, time-averaged) acoustic energy of the scattered wave concealed in the region $\Omega_1$ (see~Figure\nobreakspace \ref {fig:Ex1_intial_problem_a}). For the considered parameter set $s(\mu)$ is ranging from~$0.0225 A_s$ to $0.095 A_s$, where $A_s= \| u^{in}\|^2_{L^2(\Omega_1)}/b_0=7.2 \mathrm{J} \cdot \mathrm{Pa} {\cdot \mathrm{m}}/b_0$ at frequency $8~\mathrm{kHz}$.

			The problem is symmetric with respect to {the} $x=0$ axis, therefore only half of the domain has to be considered {for discretization}. For the discretization, we used piecewise quadratic approximation on a mesh of triangular (finite) elements.  The mesh was chosen such that there were at least $20$ degrees of freedom per wavelength, which is a standard choice for Helmholtz problems with a moderate wave number.  It yielded approximately $400000$ complex degrees of freedom for the discretization. Figures\nobreakspace  \ref {fig:Ex1_intial_problem_b} to\nobreakspace  \ref {fig:Ex1_intial_problem_d}  depict the solutions $u(\mu)$ for different parameter values with quantities of interest $s(\mu)=0.032 A_s, 0.045 A_s$ and $0.065 A_s$, respectively.

			It is revealed that for this problem, considering the classical $H^1$ inner product for the solution space leads to dramatic instabilities of the projection-based MOR methods. 
			To improve the stability, the inner product is chosen corresponding to the specific structure of the operator in~\textup {(\ref {eq:BVP1})}. The solution space $U$ is here equipped with the following inner product 
			\begin{equation} \label{eq:ex1innerU}
			\langle \bv, \bw \rangle_{U} :=\langle \rho_s^{-1} \kappa_s^2 v,w  \rangle_{L^2}+ \langle \rho_s^{-1} {\nabla}v, {\nabla}w  \rangle_{L^2}, ~~ \bv, \bw \in U,
			\end{equation}
			where $v$ and $w$ are the functions identified with $\bv$ and $\bw$, respectively, and $\rho_s$ and $\kappa_s$ are the density and the wave number associated with the unperturbed cloak (i.e., with properties from~Table\nobreakspace \ref {tab:materials}) at frequency $8~\mathrm{kHz}$.

			The operator for this benchmark is directly given in an affine form with $m_A=23$ terms. Furthermore, for online efficiency we used {empirical interpolation method~\cite{maday2007general,barrault2004empirical}} to obtain an approximate affine representation of $u^{in}(\mu)$ (or rather a vector $\bu^{in}(\mu)$ from $U$ representing an approximation of $u^{in}(\mu)$) and the right hand side vector with $50$ affine terms (with error close to machine precision).
			The approximation space $U_r$ of dimension $r=150$ was constructed with a greedy algorithm (based on sketched minres projection)  performed on a training set of $50000$ uniform random samples in $\mathcal{P}$. The test set $\mathcal{P}_{test} \subset \mathcal{P}$ was taken as $1000$ uniform random samples in $\mathcal{P}$.
			
			\emph{Minimal residual projection.}
			Let us first address the validation of the sketched minres projection from~Section\nobreakspace \ref {skminres}. For this we computed sketched (and standard) minres projections $\bu_r(\mu)$ of $\bu(\mu)$ onto $U_r$ for each $\mu \in \mathcal{P}_{\mathrm{test}}$ with sketching matrix $\bTheta$ of varying sizes. The error of approximation is here characterized by  $\Delta_\mathcal{P}:=\max_{\mu \in \mathcal{P}_{\mathrm{test}}} \| \br(\bu_r(\mu); \mu) \|_{U'} /\|\bb(\mu_s)\|_{U'}$ and $e_\mathcal{P}:=\max_{\mu \in \mathcal{P}_{\mathrm{test}}} \|\bu(\mu) - \bu_r(\mu)\|_{U} / \|\bu^{in}(\mu_s)\|_{U}$, where $\bu^{in}(\mu_s)$ is the vector representing the incident wave and $\bb(\mu_s)$ is the right hand side vector associated with the unperturbed cloak and the frequency $f = 8\mathrm{~kHz}$ (see~Figures\nobreakspace \ref {fig:Ex1_1a} and\nobreakspace  \ref {fig:Ex1_1c}). Furthermore, in~Figure\nobreakspace \ref {fig:Ex1_1e} we provide the characterization of the maximal error in the quantity of interest $e^{s}_\mathcal{P}:=\max_{\mu \in \mathcal{P}_{\mathrm{test}}} | s(\mu)-s_r(\mu) | /A_s$. For each size of $\bTheta$, $20$ realizations of the sketching matrix were considered to analyze the statistical properties of $e_\mathcal{P}$, $\Delta_\mathcal{P}$ and $e^{s}_\mathcal{P}$. 
			
			For comparison, along with the minimal residual projections we also computed the sketched (and classical) Galerkin projection introduced in~\cite{balabanov2019galerkin}. Figures\nobreakspace \ref {fig:Ex1_1b},  \ref {fig:Ex1_1d} and\nobreakspace  \ref {fig:Ex1_1f} depict the errors $\Delta_\mathcal{P}$, $e_\mathcal{P}$, $e^\mathrm{s}_\mathcal{P}$ of a sketched (and classical) Galerkin projection using $\bTheta$ of different sizes. Again, for each $k$ we used $20$ realizations of $\bTheta$ to characterize the statistical properties of the error. We see that the classical Galerkin projection is more accurate in the exact norm $\| \cdot \|_U$ and the quantity of interest than the standard minres projection. On the other hand, it is revealed that the minres projection is far better suited to random sketching. 
			
			{ 
				\begin{figure}[htp!]
					\centering
					\begin{subfigure}[b]{.4\textwidth}
						\centering
						\includegraphics[width=\textwidth]{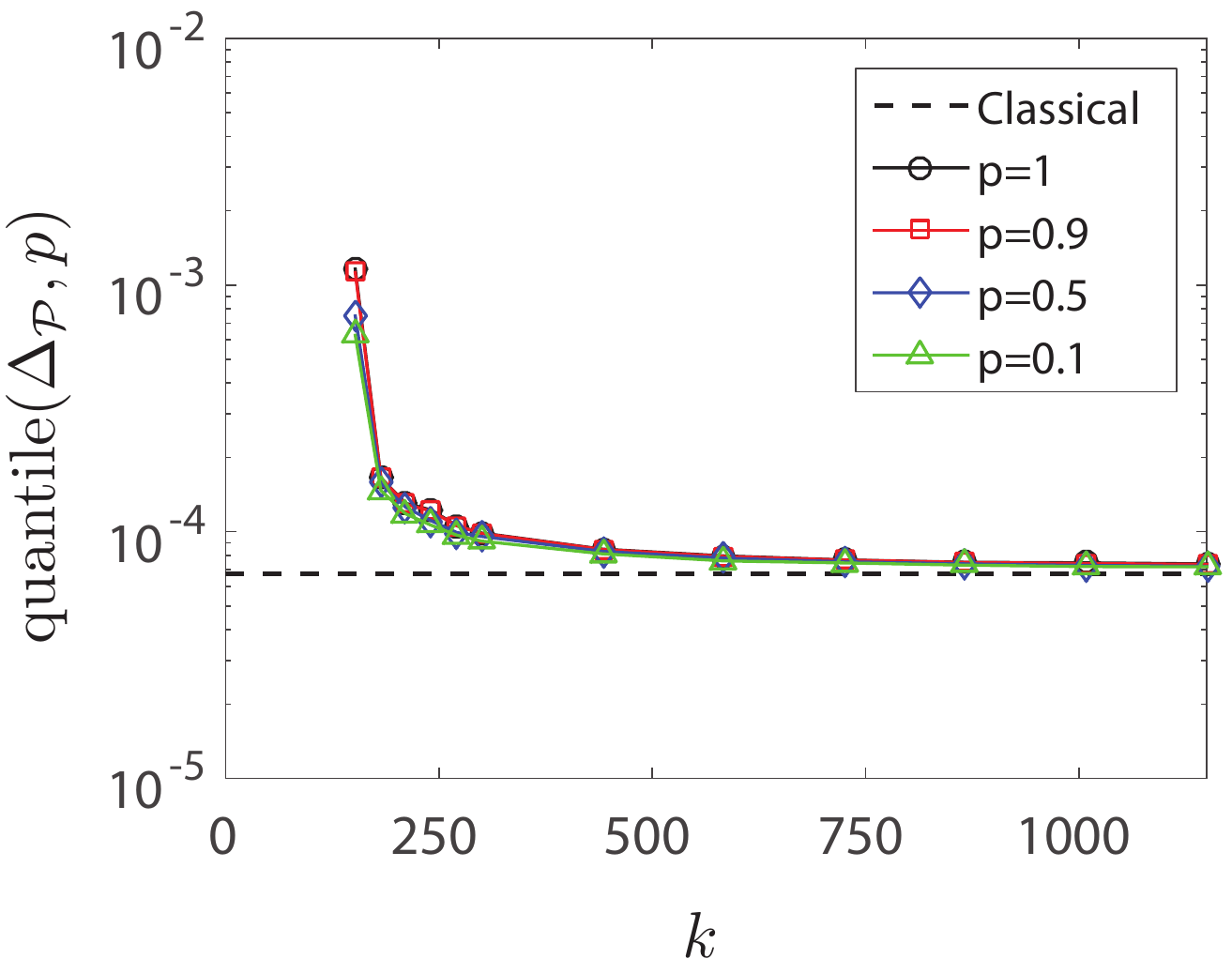}
						\caption{minres, $\Delta_{\mathcal{P}}$}
						\label{fig:Ex1_1a}
					\end{subfigure} \hspace{.01\textwidth}
					\begin{subfigure}[b]{.4\textwidth}
						\centering
						\includegraphics[width=\textwidth]{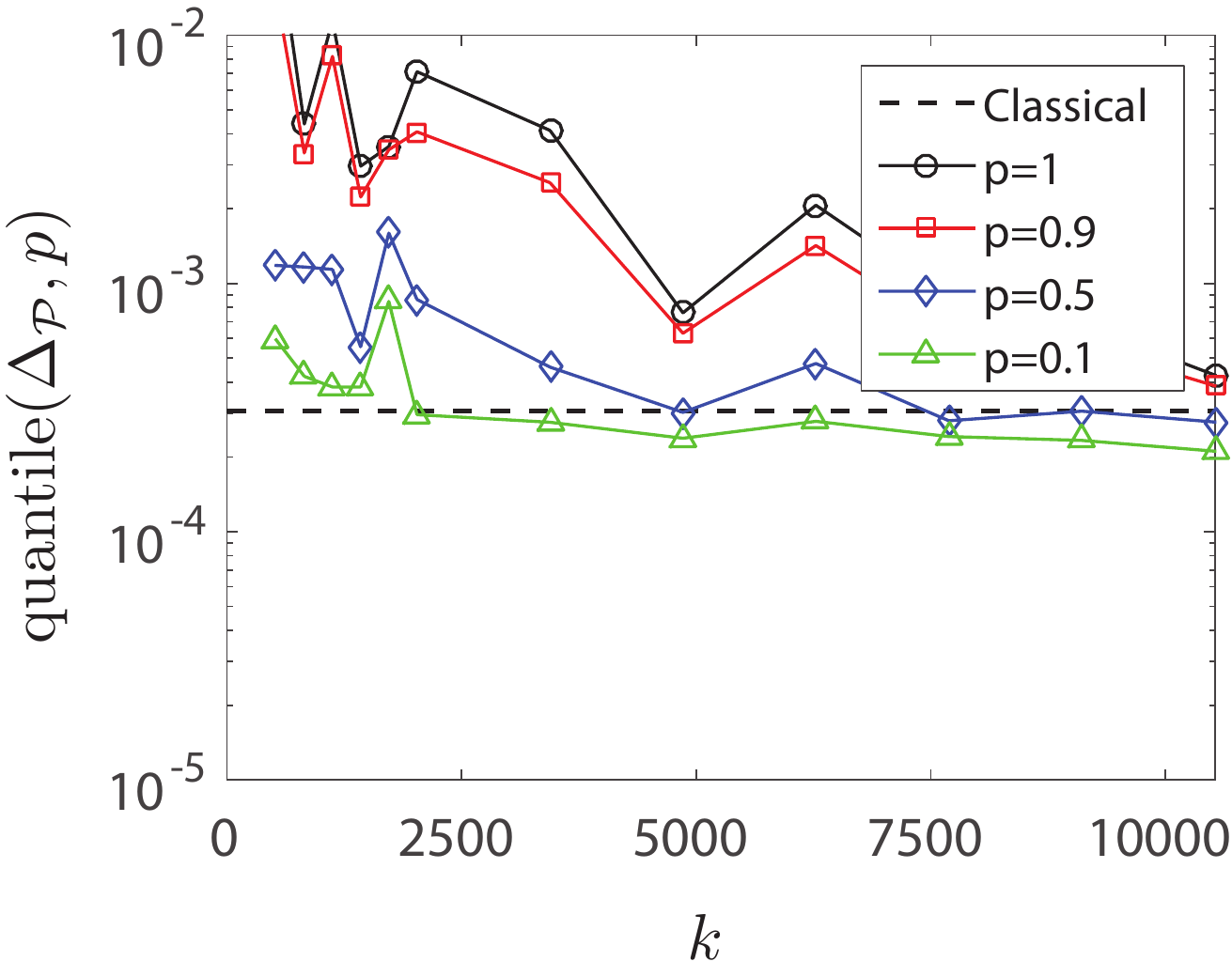}
						\caption{Galerkin, $\Delta_{\mathcal{P}}$}
						\label{fig:Ex1_1b}
					\end{subfigure}	
					\begin{subfigure}[b]{.4\textwidth}
						\centering
						\includegraphics[width=\textwidth]{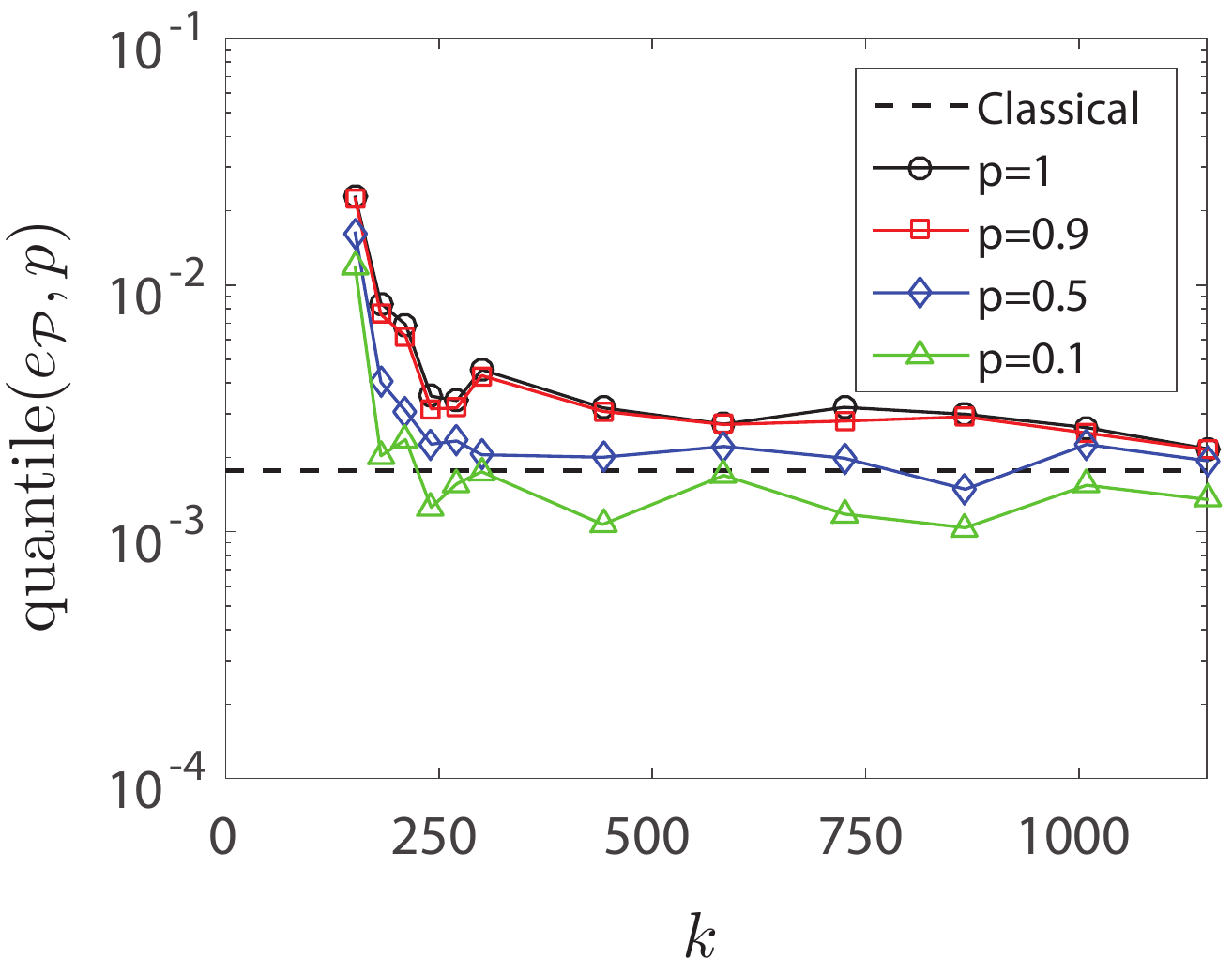}
						\caption{minres, $e_{\mathcal{P}}$}
						\label{fig:Ex1_1c}
					\end{subfigure} \hspace{.01\textwidth}
					\begin{subfigure}[b]{.4\textwidth}
						\centering
						\includegraphics[width=\textwidth]{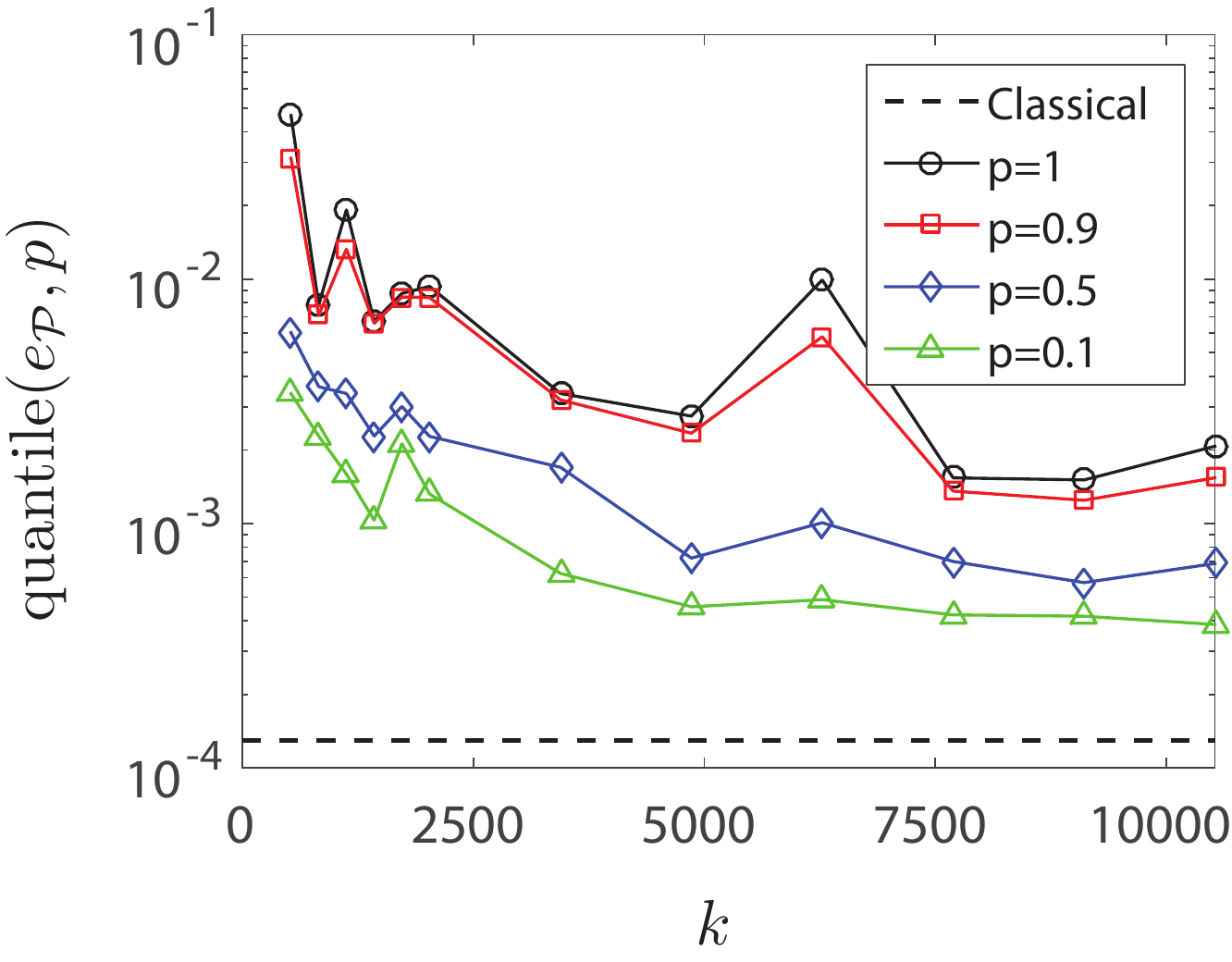}
						\caption{Galerkin, $e_{\mathcal{P}}$}
						\label{fig:Ex1_1d}
					\end{subfigure}
					\begin{subfigure}[b]{.4\textwidth}
						\centering
						\includegraphics[width=\textwidth]{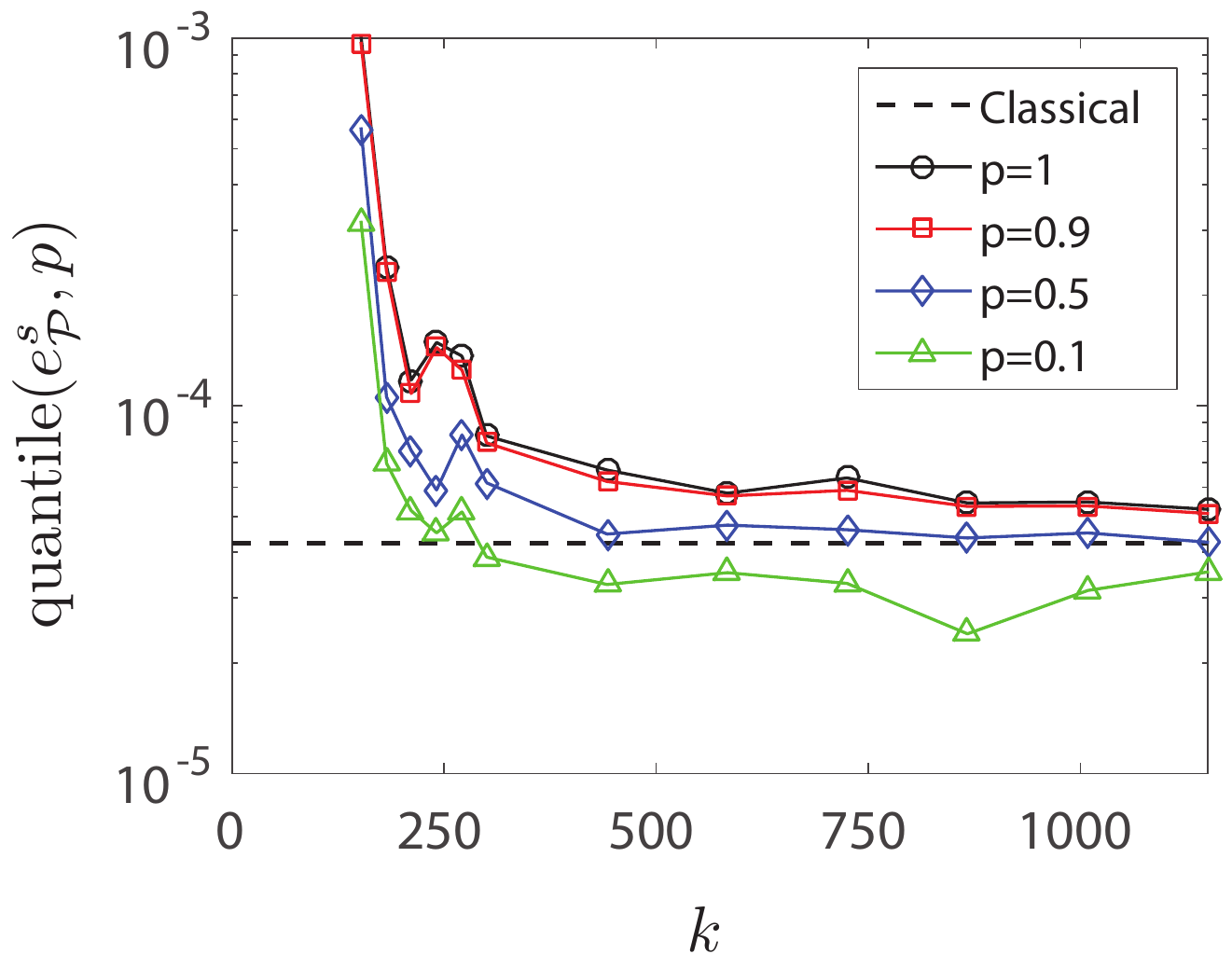}
						\caption{minres, $e^s_{\mathcal{P}}$}
						\label{fig:Ex1_1e}
					\end{subfigure} \hspace{.01\textwidth}
					\begin{subfigure}[b]{.4\textwidth}
						\centering
						\includegraphics[width=\textwidth]{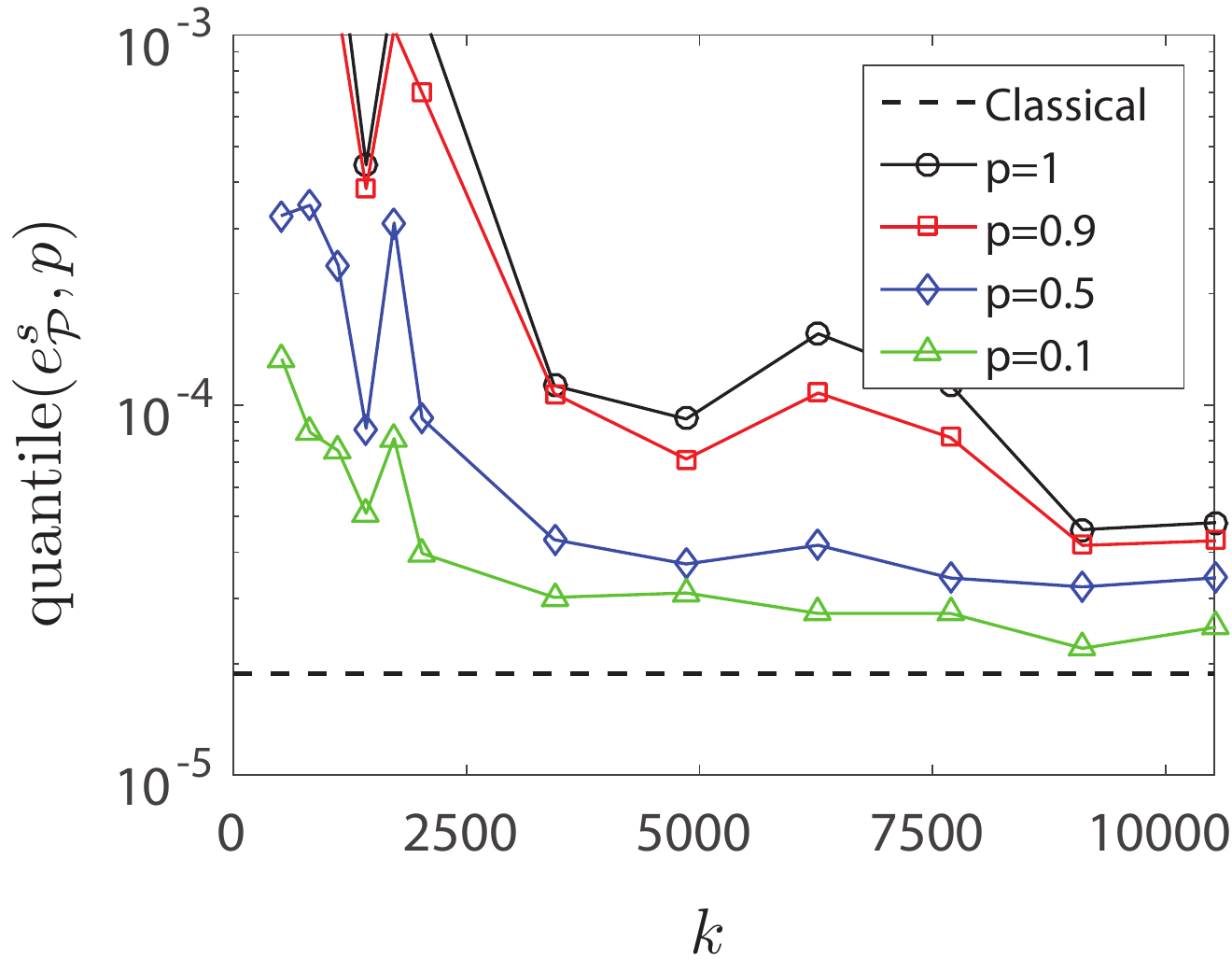}
						\caption{Galerkin, $e^s_{\mathcal{P}}$}
						\label{fig:Ex1_1f}
					\end{subfigure}
					\vspace*{-0.5em}
					\caption{ \small The errors of the classical minres and Galerkin projections and quantiles of probabilities $p=1, 0.9, 0.5$ and $0.1$ over 20 realizations of the errors of the sketched minres and Galerkin projections, versus the number of rows of $\bTheta$.  (a) Residual error $\Delta_\mathcal{P}$ of standard and sketched minres projection. (b) Residual error $\Delta_\mathcal{P}$ of standard and sketched Galerkin projection. (c) Exact error $e_\mathcal{P}$ (in $\| \cdot \|_U$) of standard and sketched minres projection. (d) Exact error $e_\mathcal{P}$ (in $\| \cdot \|_U$) of standard and sketched Galerkin projection. (e) Quantity {of interest} error $e^s_\mathcal{P}$ of standard and sketched minres projection. (f) Quantity {of interest} error $e^s_\mathcal{P}$ of standard and sketched Galerkin projection.}
					\label{fig:Ex1_1}
				\end{figure}
			}

			From~Figure\nobreakspace \ref {fig:Ex1_1} one can clearly report the (essential) preservation of the quality of the classical minres projection for $k\geq500$. Note that for the minres projection a small deviation of $e_\mathcal{P}$ and $e^{s}_\mathcal{P}$ is observed. These errors are higher or lower than the standard values with (almost) equal probability (for $k \geq 500$).  In contrast to the minres projection, the quality of the Galerkin projection is not preserved even for large $k$ up to $10000$. This can be explained by the fact that the approximation of the Galerkin projection with random sketching  is highly sensitive to the properties of the operator, which here is non-coercive and has a high condition number (for some parameter values), while the (essential) preservation of the accuracy of the standard minres projection by its sketched version is guaranteed regardless of the operator's properties. One can clearly see that the sketched minres projection using $\bTheta$ with just $k=500$ rows yields better approximation (in terms of the maximal observed error) of the solution than the sketched Galerkin projection with $k=5000$, even though the standard minres projection is less accurate than the Galerkin one.


			
			As already discussed, random sketching improves not only efficiency but also has an advantage of making the reduced model less sensitive to round-off errors thanks {to possibility of direct online solution of the (sketched) least-squares problem without appealing to the normal equation.} Figure\nobreakspace \ref {fig:Ex1_condnum} depicts the maximal condition number $\kappa_\mathcal{P}$ over $\mathcal{P}_{\mathrm{test}}$ of the reduced matrix $\bV^\bTheta_r(\mu):=\bTheta \bR^{-1}_U \bA(\mu) \bU_r$ associated with the sketched minres projection using reduced basis matrix $\bU_r$ with (approximately) unit-orthogonal columns with respect to $\langle \cdot, \cdot \rangle_U$, for varying sizes of $\bTheta$. We also provide the maximal condition number of the reduced (normal) system of equations associated with the classical minres projection. It is observed that indeed random sketching yields an improvement of numerical stability by a square root.  
			
			\begin{figure}[h!]
				\centering
				\includegraphics[width=0.5\textwidth]{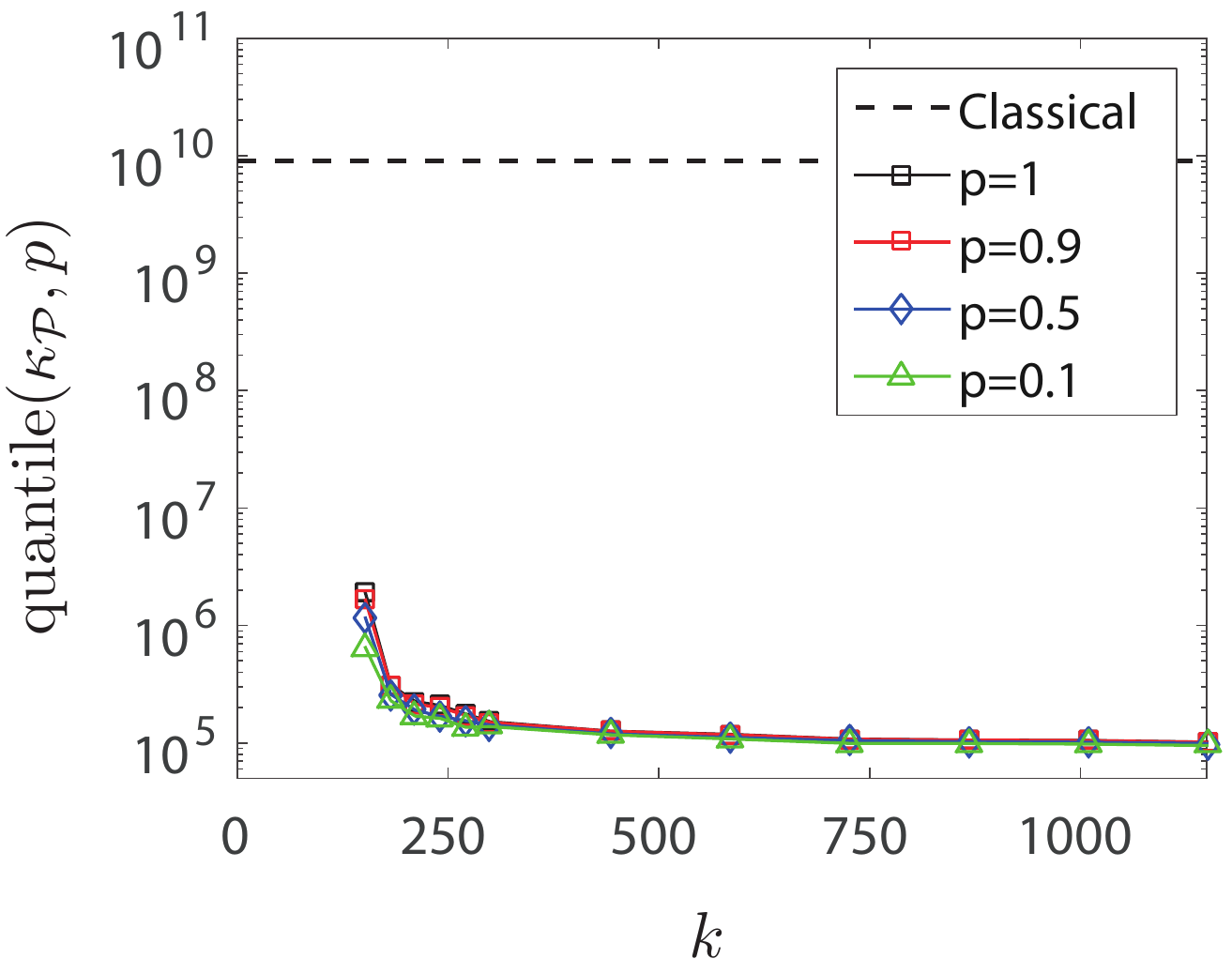}
				\caption{The maximal condition number over $\mathcal{P}_{\mathrm{test}}$ of the reduced (normal) system associated with the classical minres projection and  quantiles of probabilities $p=1, 0.9, 0.5$ and $0.1$ over $20$ realizations of the maximal condition number of the sketched reduced matrix $\bV^\bTheta_r(\mu)$, versus the number of rows $k$ of $\bTheta$.} 
				\label{fig:Ex1_condnum}
			\end{figure}

			\emph{Certification of the sketch.}
			Next the experimental validation of the procedure for a posteriori certification of the $\bTheta$-sketch or the sketched solution (see~Section\nobreakspace \ref {sketchcert}) is addressed. For this, we generated several $\bTheta$ of different sizes $k$ and for each of them computed the sketched minres projections $\bu_r(\mu) \in U_r$ for all $\mu \in \mathcal{P}_\mathrm{test}$. Thereafter~Propositions\nobreakspace \ref {thm:certvectors} and\nobreakspace  \ref {thm:omegaUB}, with $V(\mu):= R_r(U_r;\mu)$ defined by~\textup {(\ref {eq:R_r})}, were considered for certification of the residual error estimates $\| \br(\bu_r(\mu);\mu) \|^\bTheta_{U'}$ or the quasi-optimality of $\bu_r(\mu)$ in the residual error. Oblivious embeddings of varying sizes were tested for $\bTheta^*$. For simplicity it was assumed that all considered $\bTheta^*$ satisfy~\textup {(\ref {eq:Vconcentration})} with $\varepsilon^*=0.05$ and (small) probability of failure $\delta^*$.

			By~Proposition\nobreakspace \ref {thm:certvectors} the certification of the sketched residual error estimator $\| \br(\bu_r(\mu);\mu) \|^\bTheta_{U'}$ can be performed by comparing it to $\| \br(\bu_r(\mu);\mu) \|^{\bTheta^*}_{U'}$. More specifically, by~\textup {(\ref {eq:certvectors})} we have that with probability at least {$1-4\delta^*$},
			\begin{equation}\label{eq:rescert}
			\begin{split}
			&  |\| \br(\bu_r(\mu);\mu) \|^2_{U'} - (\| \br(\bu_r(\mu);\mu) \|^\bTheta_{U'})^2|^{1/2} \\
			&\leq \left( |  (\| \br(\bu_r(\mu);\mu) \|^{\bTheta^*}_{U'})^2 -  (\| \br(\bu_r(\mu);\mu) \|^\bTheta_{U'})^2| 
			+ \frac{\varepsilon^*}{1-\varepsilon^*} (\| \br(\bu_r(\mu);\mu) \|^{\bTheta^*}_{U'})^2 \right)^{1/2}.
			\end{split}
			\end{equation}
			Figure\nobreakspace \ref {fig:Ex1a_2} depicts $d_\mathcal{P} := \max_{\mu \in \mathcal{P}_{\mathrm{test}}} d(\bu_r(\mu); \mu)/ \| \bb(\mu_s) \|_{U'}$, where $d(\bu_r(\mu); \mu)$ is the exact discrepancy $  |\| \br(\bu_r(\mu);\mu) \|^2_{U'} - (\| \br(\bu_r(\mu);\mu) \|^\bTheta_{U'})^2|^{1/2}$ or its (probabilistic) upper bound in~\textup {(\ref {eq:rescert})}. For each $\bTheta$ and  $k^*$, $20$ realizations of $d_\mathcal{P}$ were computed for statistical analysis.  
			We see that (sufficiently) tight upper bounds for $ |\| \br(\bu_r(\mu);\mu) \|^2_{U'} - (\| \br(\bu_r(\mu);\mu) \|^\bTheta_{U'})^2|^{1/2}$ were obtained already when $k^*\geq 100$, which is in particular several times smaller than the size of $\bTheta$ required for quasi-optimality of $\bu_r(\mu)$. This implies that the certification of the effectivity of the error estimator $\| \br(\bu_r(\mu);\mu) \|^\bTheta_{U'}$ by $\| \br(\bu_r(\mu);\mu) \|^{\bTheta^*}_{U'}$ should require negligible computational costs compared to the cost of obtaining the solution (or estimating the error in adaptive algorithms such as greedy algorithms).   
			
			By~Proposition\nobreakspace \ref {thm:skminresopt}, the quasi-optimality of $\bu_r(\mu)$ can be guaranteed if $\bTheta$ is an $\varepsilon$-embedding for $V(\mu)$. The $\varepsilon$-embedding property of each $\bTheta$ was verified with~Proposition\nobreakspace \ref {thm:omegaUB}. In~Figure\nobreakspace \ref {fig:Ex1_3} we provide $\omega_{\mathcal{P}}:= \max_{\mu \in \mathcal{P}_{\mathrm{test}}}{\tilde{\omega}(\mu)}$ where $\tilde{\omega}(\mu)=\omega(\mu)$, which is the minimal value for $\varepsilon$ such that $\bTheta$ is an $\varepsilon$-embedding for $V(\mu)$, or $\tilde{\omega}(\mu)=\bar{\omega}(\mu)$, which is the upper bound of $\omega(\mu)$ computed with~\textup {(\ref {eq:compute_omega})} using $\bTheta^*$ of varying sizes. For illustration purposes we here allow the value $\varepsilon$ in~Definition\nobreakspace \ref {def:epsilon_embedding} to be larger than $1$.
			The statistical properties of ${\omega}_{\mathcal{P}}$ were obtained with $20$ realizations for each $\bTheta$ and value of $k^*$.
			Figure\nobreakspace \ref {fig:Ex1_3a} depicts the statistical characterization of ${\omega}_{\mathcal{P}}$ for $\bTheta$ of size $k=5000$. The maximal value of ${\omega}_{\mathcal{P}}$ observed for each $k^*$ and $\bTheta$ is presented in~Figure\nobreakspace \ref {fig:Ex1_3b}. It is observed that with a posteriori estimates from~Proposition\nobreakspace \ref {thm:omegaUB} using $\bTheta^*$ of size $k^*=6000$, we here can guarantee with high probability that $\bTheta$ with $k=5000$ satisfies an $\varepsilon$-embedding property for $\varepsilon \approx 0.6$. The theoretical bounds from~\cite{balabanov2019galerkin} for $\bTheta$ to be an $\varepsilon$-embedding for $V(\mu)$ with $\varepsilon=0.6$ yield much larger sizes, namely, for the probability of failure $\delta \leq 10^{-6}$, they require more than $k=45700$ rows for Gaussian matrices and $k=102900$ rows for SRHT. This proves~Proposition\nobreakspace \ref {thm:omegaUB} to be very useful for the adaptive selection of sizes of random matrices or for the certification {of the sketched inner product $\langle \cdot, \cdot \rangle^\bTheta_U$ for all vectors in $V$.} Note that the adaptive selection of the size of $\bTheta$ can also be performed without requiring $\bTheta$ to be an $\varepsilon$-embedding for $V(\mu)$ with $\varepsilon<1$, based on the observation that oblivious embeddings yield preservation of the quality of the minres projection when they are $\varepsilon$-embeddings for $V(\mu)$ with small $\varepsilon$, which is possibly larger than $1$ {(see Remark\nobreakspace \ref {rmk:altquasiopt}).}
			
			\begin{figure}[h!]
				\centering
				\begin{subfigure}[b]{.4\textwidth}
					\centering
					\includegraphics[width=\textwidth]{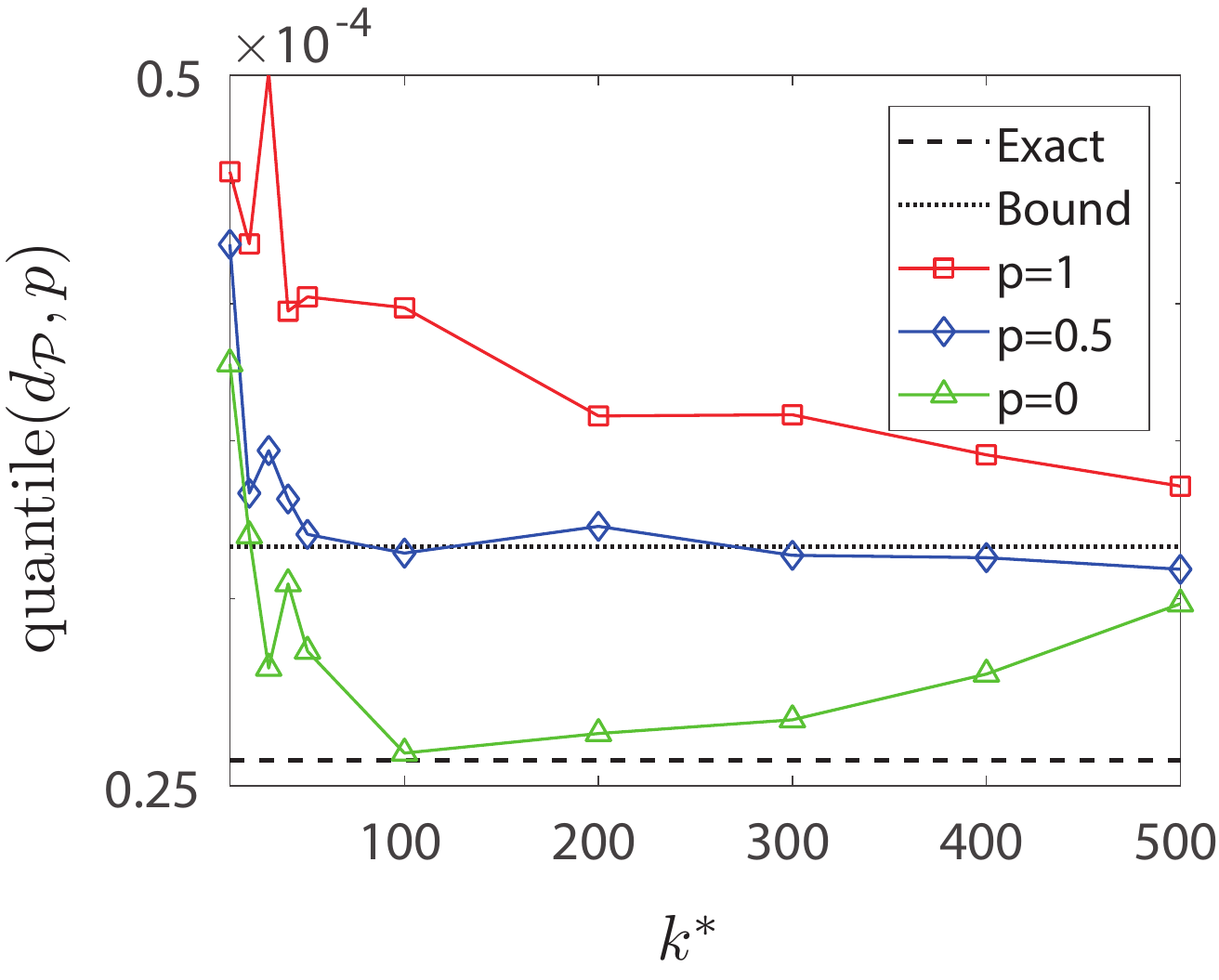}
					\caption{}
					\label{fig:Ex1a_2a}
				\end{subfigure} \hspace{.01\textwidth}
				\begin{subfigure}[b]{.4\textwidth}
					\centering
					\includegraphics[width=\textwidth]{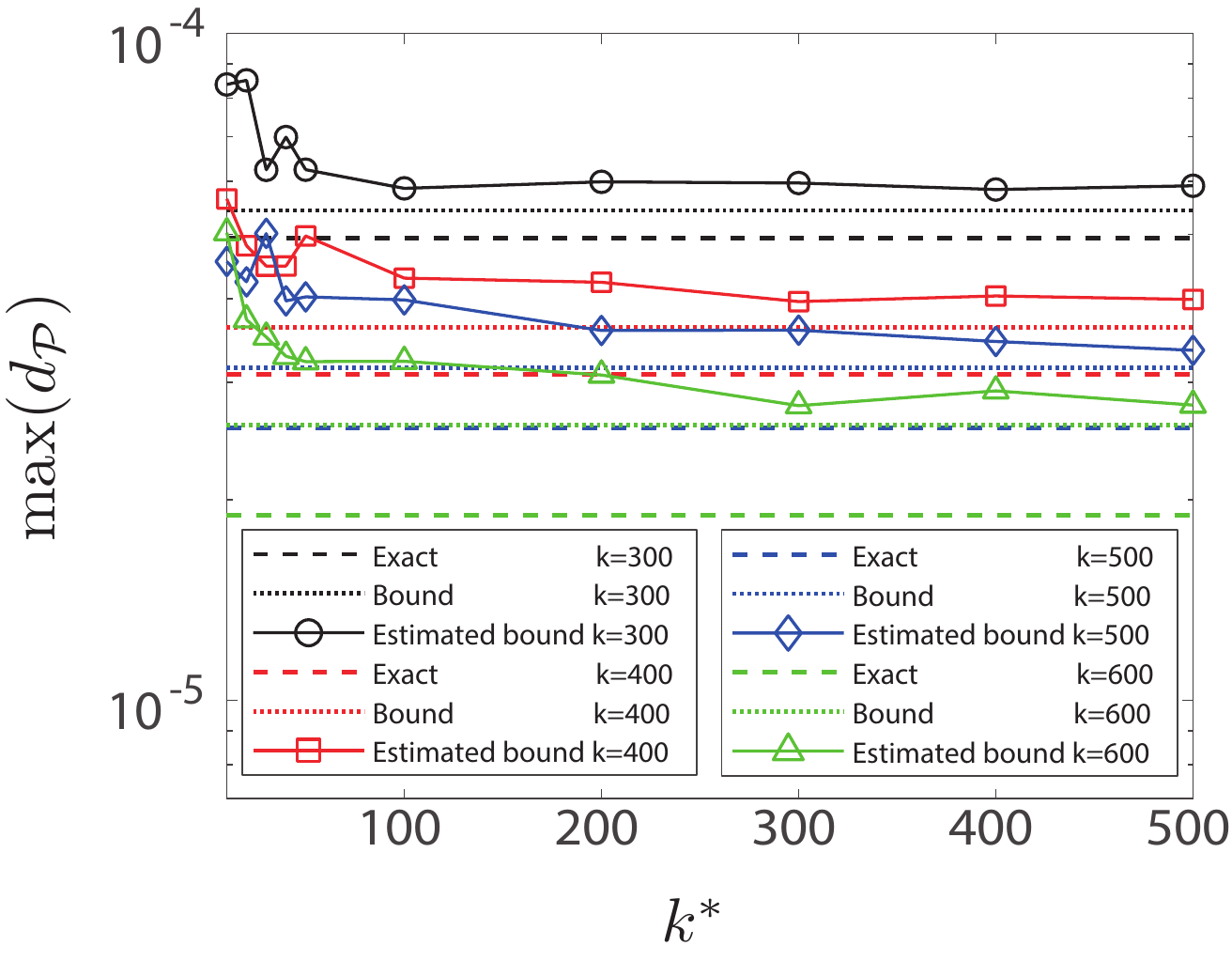}
					\caption{}
					\label{fig:Ex1a_2b}
				\end{subfigure}	
				\caption{The discrepancy $  |\| \br(\bu_r(\mu);\mu) \|^2_{U'} - (\| \br(\bu_r(\mu);\mu) \|^\bTheta_{U'})^2|^{1/2}$ between the residual error and the sketched error estimator with $\bTheta$, and the upper bound of this value computed with~\textup {(\ref {eq:rescert})}.
					(a) The exact discrepancy for $\bTheta$ with {$k=500$} rows, the upper bound~\textup {(\ref {eq:rescert})} of this discrepancy taking $\| \cdot \|^{\bTheta^*}_{U'} = \| \cdot \|_{U'}$, and quantiles of probabilities $p=1,0.5$ and $0$ (i.e., the observed maximum, median and minimum) over $20$ realizations of the (probabilistic) upper bound~\textup {(\ref {eq:rescert})} versus the size of $\bTheta^*$.
					(b) The exact discrepancy, the upper bound~\textup {(\ref {eq:rescert})} taking $\| \cdot \|^{\bTheta^*}_{U'} = \| \cdot \|_{U'}$, and the maximum of $20$ realizations of the (probabilistic) upper bound~\textup {(\ref {eq:rescert})} versus the number of rows $k^*$ of $\bTheta^*$ for varying sizes {$k$} of $\bTheta$.}
				\label{fig:Ex1a_2}
			\end{figure}
			
			\begin{figure}[h!]
				\centering
				\begin{subfigure}[b]{.4\textwidth}
					\centering
					\includegraphics[width=\textwidth]{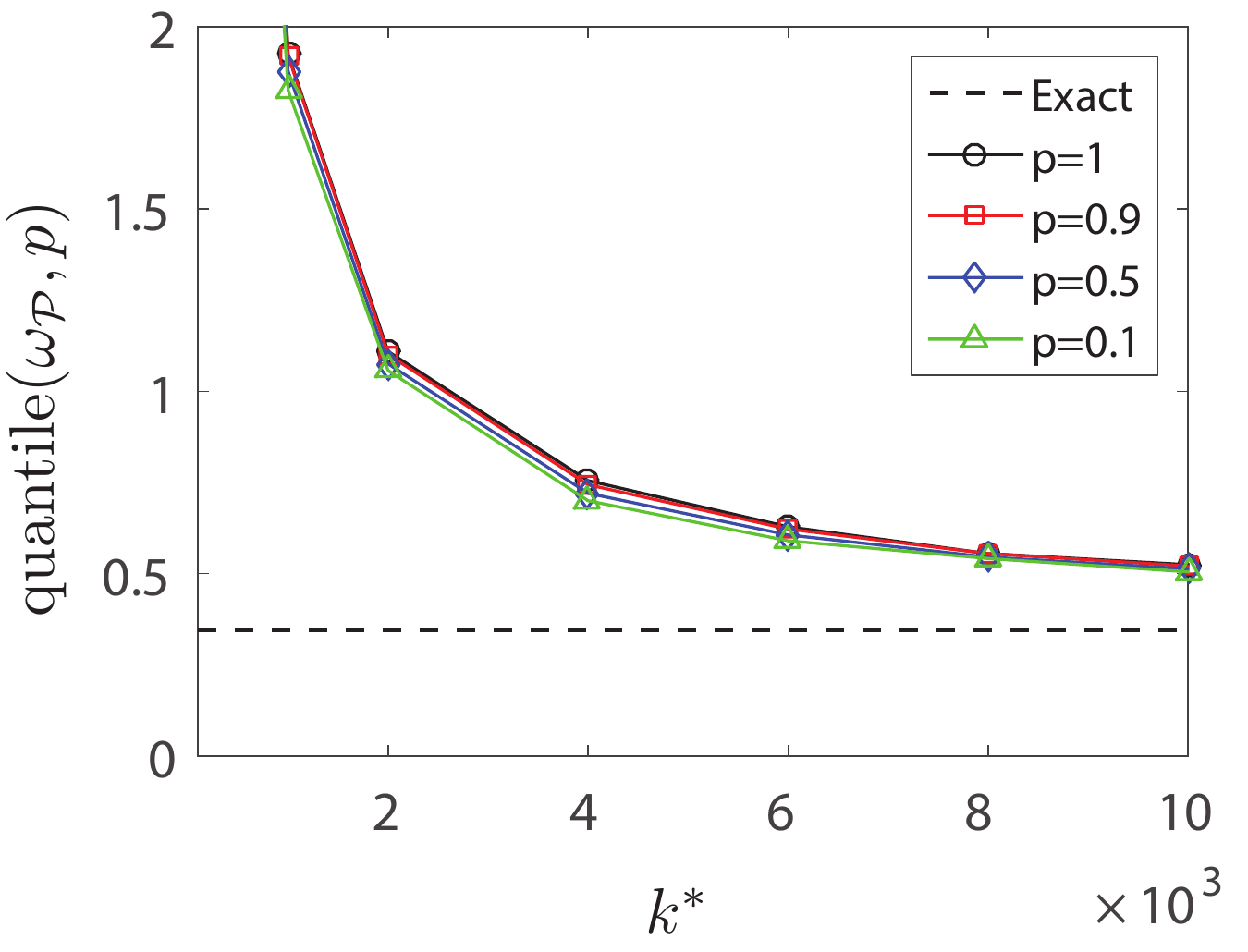}
					\caption{}
					\label{fig:Ex1_3a}
				\end{subfigure} \hspace{.01\textwidth}
				\begin{subfigure}[b]{.4\textwidth}
					\centering
					\includegraphics[width=\textwidth]{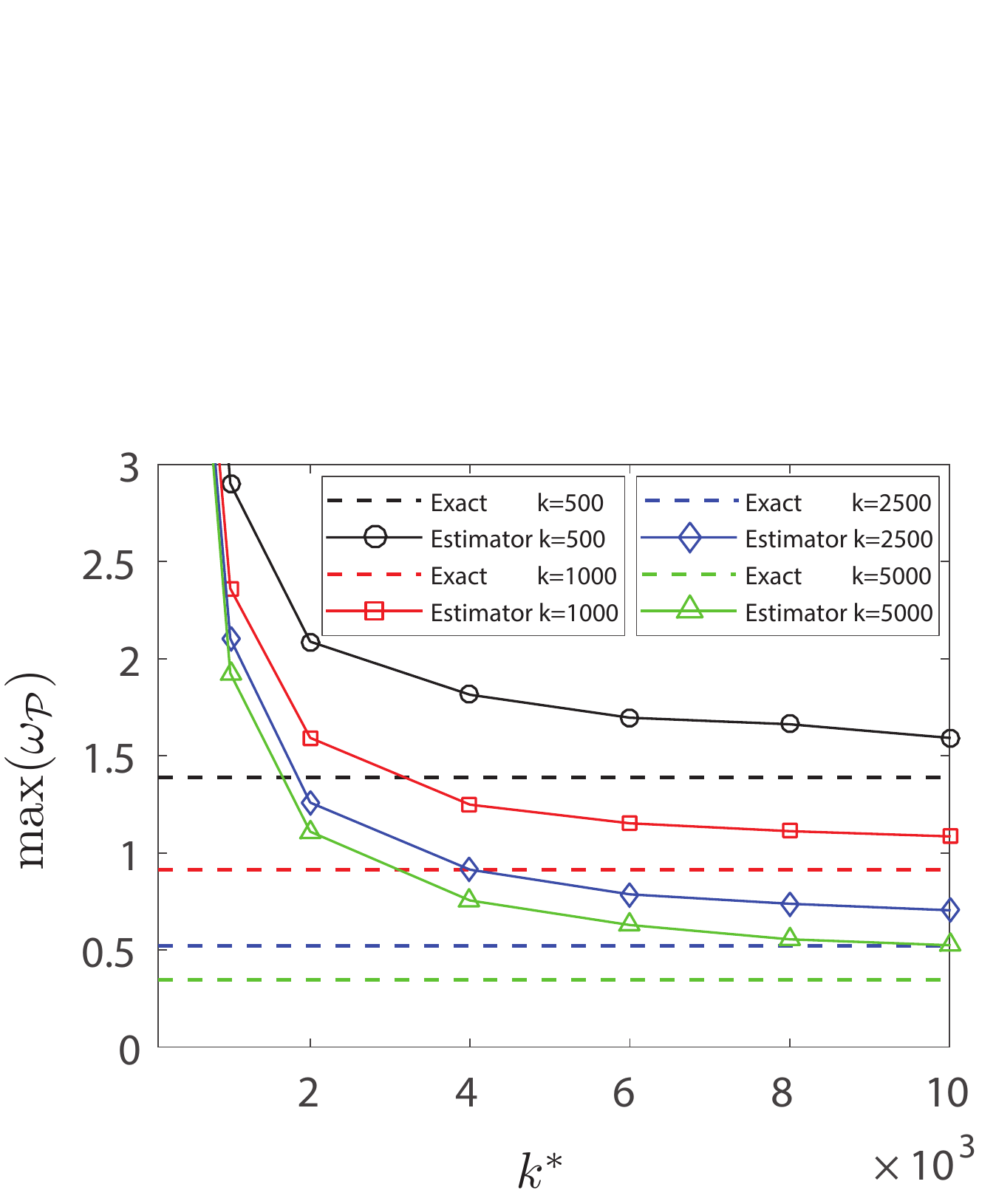}
					\caption{}
					\label{fig:Ex1_3b}
				\end{subfigure}	
				\caption{The minimal value for $\varepsilon$ such that $\bTheta$ is an $\varepsilon$-embedding for $V_r(\mu)$ for all $\mu \in \mathcal{P}_\mathrm{test}$, and a posteriori random estimator of this value obtained with the procedure from~Section\nobreakspace \ref {sketchcert} using $\bTheta^*$ with $k^*$ rows. (a) The minimal value for $\varepsilon$ for $\bTheta$ with $k=5000$ rows and quantiles of probabilities $p=1,0.9,0.5$ and $0.1$ over $20$ samples of the estimator, versus the size of $\bTheta^*$. (b) The minimal value for $\varepsilon$  and the maximum of $20$ samples of the estimator, versus the number of rows of $\bTheta^*$ for varying sizes of $\bTheta$. }
				\label{fig:Ex1_3}
			\end{figure}

			\begin{remark} \label{rmk:altquasiopt}
				Throughout the paper the quality of $\bTheta$ (e.g., for approximation of minres projection in~Section\nobreakspace \ref {skminres}) was characterized by  $\varepsilon$-embedding property. However, for this numerical benchmark the sufficient size for $\bTheta$ to be an $\varepsilon$-embedding for $V(\mu)$ is in several times larger than the one yielding an accurate approximation of the minres projection. In particular, $\bTheta$ with $k=500$ rows provides {with high probability} an approximation with residual error very close to the minimal one, but it does not satisfy an $\varepsilon$-embedding property (with $\varepsilon <1$), which is required for guaranteeing the quasi-optimality of $\bu_r(\mu)$ with~Proposition\nobreakspace \ref {thm:skminresopt}. A more {reliable} way for certification of the quality of $\bTheta$ for approximation of the minres projection onto $U_r$ can be derived by taking into account that $\bTheta$ was generated from a distribution of oblivious embeddings. In such a case it is enough to only certify that $\| \cdot \|_{U}^\bTheta$ provides an approximate upper bound of $\| \cdot \|_{U}$ for all vectors in $V(\mu)$ without the need to guarantee that $\| \cdot \|_{U}^\bTheta$ is an approximate lower bound (that {is} in practice  the main bottleneck). This approach is outlined below.  
				
				We first observe that $\bTheta$ was generated from a distribution of random matrices such that for all $\bx \in V(\mu)$, we have $$ \mathbb{P} \left ( \left | \| \bx \|^2_U -  (\| \bx \|^{\bTheta}_U)^2 \right |\leq \varepsilon_0 \| \bx \|^2_U \right ) \geq 1-\delta_0. $$
				The values $\varepsilon_0$ and $\delta_0$ can be obtained from the theoretical bounds from~\cite{balabanov2019galerkin} or practical experience.  Then one can show that for the sketched minres projection $\bu_r(\mu)$ associated with $\bTheta$, the {inequality}
				\begin{equation} \label{eq:altquasiopt}
				\| \br(\bu_r(\mu); \mu) \|_{U'} \leq \sqrt{\frac{1+\varepsilon_0}{1-\omega(\mu)}} \min_{\bx \in U_r}\| \br(\bx ; \mu) \|_{U'},  
				\end{equation}
				holds with probability at least $1-\delta_0$, where $\omega(\mu)<1$ is the minimal value for $\varepsilon$ such that for all $\bx \in V(\mu)$
				$$(1- \varepsilon) \| \bx \|^2_U \leq (\| \bx \|^\bTheta_U)^2. $$ 
				The quasi-optimality of $\bu_r(\mu)$ in the norm $\| \cdot \|_U$ rather than the residual norm can be readily derived from relation~\textup {(\ref {eq:altquasiopt})} by using the equivalence between the residual norm and the error in $\| \cdot \|_U$.

				In this way a characterization of the quasi-optimality of the sketched minres projection with $\bTheta$ can be obtained from the a posteriori upper bound of $\omega(\mu)$ in~\textup {(\ref {eq:altquasiopt})}.  Note that since $\bTheta$ is an oblivious subspace embedding, the parameters $\varepsilon_0$ and $\delta_0$ do not depend on the dimension of $V(\mu)$, which implies that the considered value for $\varepsilon_0$ should {be an order of magnitude less than $\omega(\mu)$.} Therefore, it can be a good way to choose $\varepsilon_0$ as $\omega(\mu)$ (or rather its upper bound) {multiplied by a small factor, say $0.1$.}
				
				The (probabilistic) upper bound $\bar{\omega}(\mu)$ for $\omega(\mu)$ can be obtained a posteriori by following a similar procedure as the one from~Proposition\nobreakspace \ref {thm:omegaUB} described for {verification of the $\varepsilon$-embedding property}.  More precisely, we can use similar arguments as in~Proposition\nobreakspace \ref {thm:omegaUB}  to show that 
				$$\bar{\omega}(\mu): =  1- (1-\varepsilon^*) \min_{\bx \in V / \{ \bnull \}} \left (\frac{\| \bx\|^{\bTheta}_U}{\| \bx\|^{\bTheta^*}_U} \right )^2$$
				is an upper bound for $\omega(\mu)$ with probability at least $1-\delta^*$.
				
				Let us now provide experimental validation of the proposed approach. For this we considered same sketching matrices $\bTheta$ as in the previous experiment for validation of the $\varepsilon$-embedding property.	For each $\bTheta$  we computed $\omega_{\mathcal{P}}:= \max_{\mu \in \mathcal{P}_{\mathrm{test}}}{\tilde{\omega}(\mu)}$, where $\tilde{\omega}(\mu) = {\omega}(\mu)$  or its upper bound $\bar{\omega}(\mu)$ using $\bTheta^*$ of different sizes (see~Figure\nobreakspace \ref {fig:Ex1_32}). Again $20$ realizations of ${\omega}_{\mathcal{P}}$ were considered for the statistical characterization of ${\omega}_{\mathcal{P}}$ for each $\bTheta$ and size of $\bTheta^*$. One can clearly see that the present approach provides better estimation of the quasi-optimality constants than the one with the $\varepsilon$-embedding property. In particular, the quasi-optimality guarantee  for $\bTheta$ with $k=500$ rows is experimentally verified. Furthermore, we see that in all the experiments the a posteriori estimates are lower than $1$ even for $\bTheta^*$ of small sizes, yet they are larger than the exact values, which implies efficiency and robustnesses of the method. From~Figure\nobreakspace \ref {fig:Ex1_32}, a good accuracy of a posteriori estimates is with high probability attained for $k^* \geq k/2$.

				\begin{figure}[h!]
					\centering
					\begin{subfigure}[b]{.4\textwidth}
						\centering
						\includegraphics[width=\textwidth]{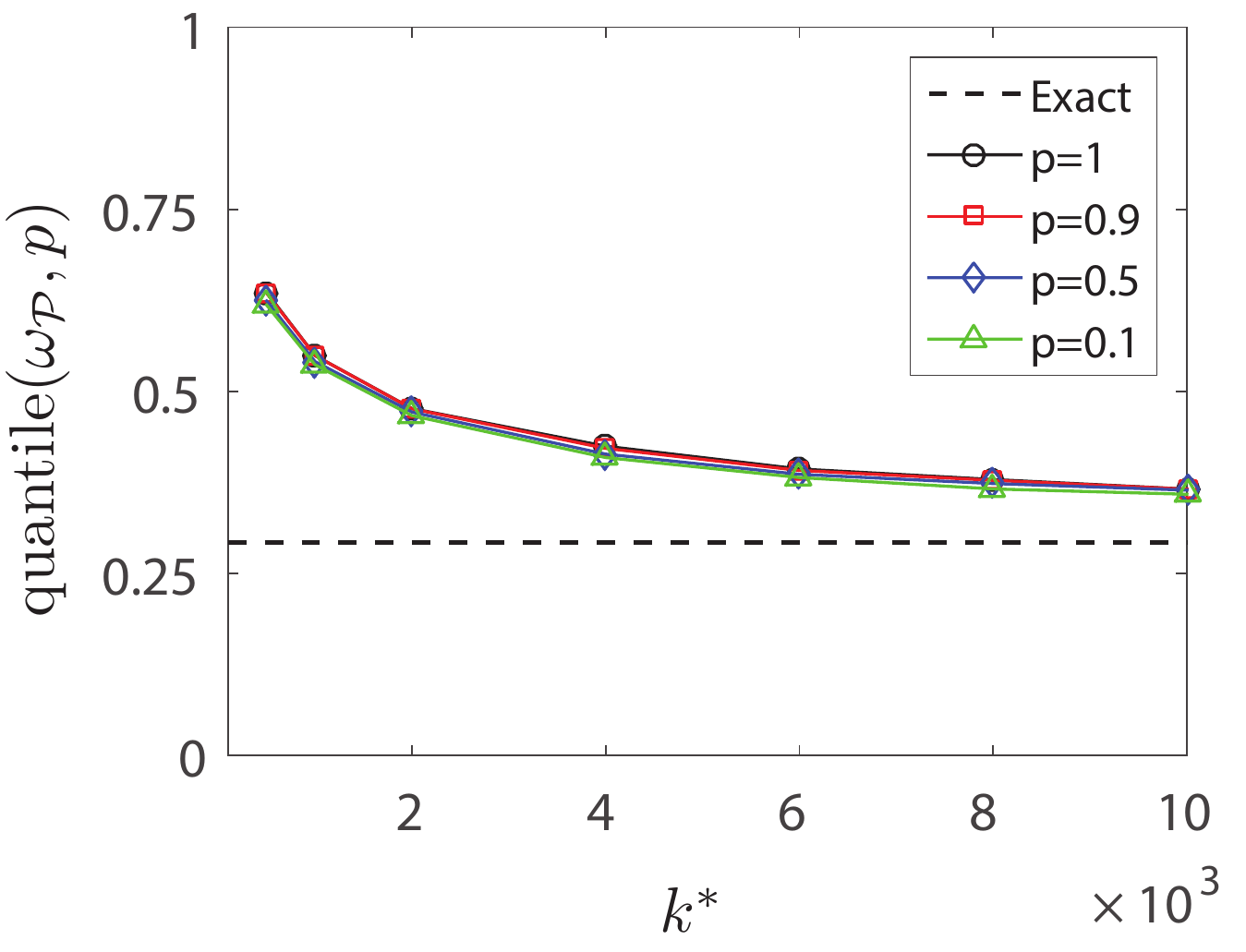}
						\caption{}
						\label{fig:Ex1_32a}
					\end{subfigure} \hspace{.01\textwidth}
					\begin{subfigure}[b]{.4\textwidth}
						\centering
						\includegraphics[width=\textwidth]{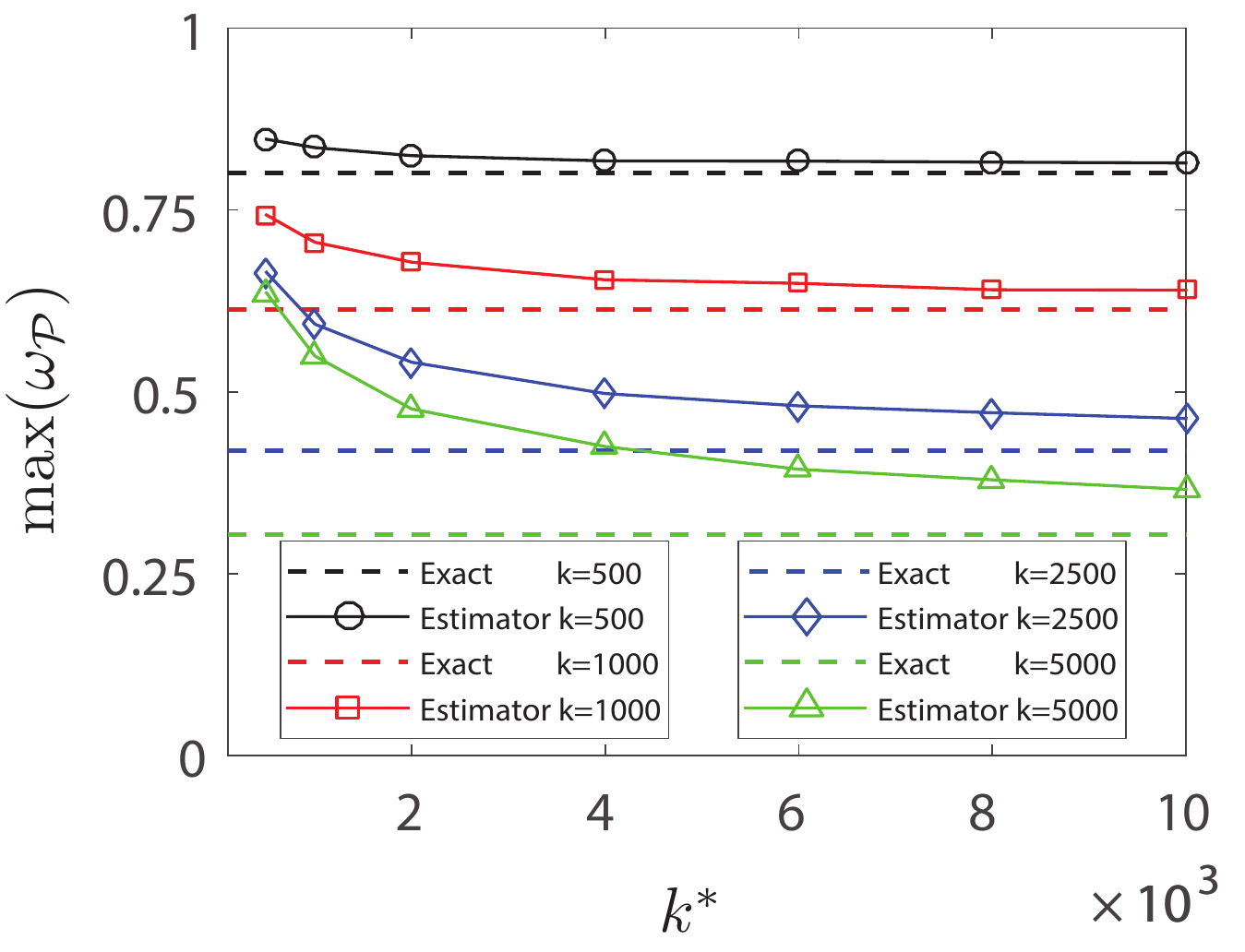}
						\caption{}
						\label{fig:Ex1_32b}
					\end{subfigure}	
					\caption{
						{The minimal value for $\varepsilon$ such that $(1-\varepsilon) \| \bx \|^2_{U} \leq (\| \bx \|^\bTheta_{U})^2$ holds for all $\bx \in V_r(\mu)$ and $\mu \in \mathcal{P}_\mathrm{test}$, and a posteriori random estimator of this value using $\bTheta^*$ with $k^*$ rows.} (a) The minimal value for $\varepsilon$ for $\bTheta$ with $k=5000$ rows and quantiles of probabilities $p=1,0.9,0.5$ and $0.1$ over $20$ realizations of the estimator, versus the size of $\bTheta^*$. (b) The minimal value for $\varepsilon$  and the maximum of $20$ realizations of the estimator versus the number of rows of $\bTheta^*$, for varying sizes of $\bTheta$.}
					\label{fig:Ex1_32}
				\end{figure}
				
			\end{remark}

			\emph{Computational costs.}
			For this benchmark, random sketching yielded drastic computational savings in the offline stage and considerably improved online efficiency. 
			To verify the gains for the offline stage, we executed two greedy algorithms for the generation of the reduced approximation space of dimension $r=150$ based on the minres projection and the sketched minres projection, respectively. The standard algorithm resulted in a computational burden after reaching $96$-th iteration due to exceeding the limit of RAM ($16$GB). {This took more than $3$ hours of runtime.}   Note that performing $r=150$ iterations in this case would require around $25$GB of RAM (mainly utilized for storage of the affine factors of $\bR^{-1}_U \bA(\mu) \bU_r$) {and more than $7$ hours of runtime}. In contrast to the standard method, conducting $r=150$ iterations of a greedy algorithm with random sketching using $\bTheta$ of size $k=2500$ (and $\bGamma$ of size $k'=500$) for the sketched minres projection and $\bTheta^*$ of size $k^*=250$ for the error certification, took only $0.65$GB of RAM. Moreover, the sketch required only a minor part ($0.2$GB) of the aforementioned amount of memory, while the major part was consumed by the {initialization and maintenance} of the full order model. The sketched greedy algorithm had a total runtime of $1.9$ hours{, which is less than the (expected) runtime for the standard greedy algorithm in more than $3.5$ times. From these $1.9$ hours,} $0.8$ hours was spent on computation of $150$ snapshots,  $0.2$ hours on provisional online solutions and $0.9$ hours on random projections. 
			
			Next the improvement of online computational cost of minres projection is addressed. For this, we computed the reduced solutions on the test set with {a} standard method, which consists in assembling the reduced system of equations (representing the normal equation) from its affine decomposition (precomputed in the offline stage) and its subsequent solution with built in Matlab\textsuperscript{\textregistered} {R2017b} linear solver. The online solutions on the test set were additionally computed with the sketched method for comparison of runtimes and storage requirements. For this, for each parameter value, the reduced least-squares problem was assembled from the precomputed affine decompositions of $\bV_r^\bPhi(\mu)$ and $\bb^\bPhi(\mu)$ and solved with the normal equation using {the} built in Matlab\textsuperscript{\textregistered} {R2017b} linear solver.
			Note that both methods proceeded with the normal equation. The difference was in the way how this equation was obtained. For the standard method it was directly assembled from the affine representation, while for the sketched method it was computed from the sketched matrices $\bV_r^\bPhi(\mu)$ and $\bb^\bPhi(\mu)$.
			
			Table\nobreakspace \ref {tab:onlineruntimes} depicts the runtimes and memory consumption taken by the standard and sketched {minres} online stages for varying sizes of the reduced space and $\bPhi$ (for the sketched method).  The sketch's sizes were picked such that the associated reduced solutions with high probability had almost (higher by at most a factor of $1.2$) optimal residual error. Our approach nearly {divided by 3} the online runtime for all values of $r$ from~Table\nobreakspace \ref {tab:onlineruntimes}. Furthermore, the improvement of memory requirements was even greater. For instance, for $r=150$ the online memory consumption was divided {by} $6.8$. {In~Table\nobreakspace \ref {tab:onlineruntimes} we also provide the cost of standard/sketched Galerkin online stage, which is more efficient than the other two due to less cost of forming the reduced system of equations (but can have worse quasi-optimality constants for non-coercive or ill-conditioned problems).}

			\begin{table}[tbhp]
				\caption{CPU times in seconds and amount of memory in MB taken by the standard {minres, sketched minres and Galerkin} online solvers for the solutions on the test set. }
				\label{tab:onlineruntimes}
				\centering
				\scalebox{0.85}{
					\begin{tabular}{|c|r|r|r|r|r|r|r|r|r|} \hline
						\multirow{2}{*}{} & \multicolumn{3}{c|}{Standard minres} & \multicolumn{3}{c|}{Sketched minres} & \multicolumn{3}{c|}{Galerkin}  \\  \cline{2-10}
						&$r=50$  & $r=100$ & $r=150$ & $\begin{array}{ll} r&=50 \\ k'&=300 \end{array}$ & $\begin{array}{ll} r&=100 \\ k'&=400 \end{array}$ &  $\begin{array}{ll} r&=150 \\ k'&=500 \end{array}$ & $r=50$  & $r=100$ & $r=150$  \\  [2pt]  \hline 
						CPU & $1$ & $3.6$ & $7.8$ & $0.4$ & $1.3$ & $2.2$ & $0.25$ & $0.5$ & $0.95$ \\ \cline{1-10}   
						Storage & $22$ & $87$ & $193$ & $5.8$ & $15$ & $28$ & $1$ & $3.8$ & $8.5$  \\ \cline{1-10}
						\hline 
				\end{tabular}}
			\end{table}

			\subsection{Advection-diffusion problem}
			The dictionary-based approximation method proposed in~Section\nobreakspace \ref {dbminres} is validated on a $2$D advection dominated advection-diffusion problem defined on a complex flow. This problem is governed by the following equations
			
			\begin{equation} \label{eq:BVP2}
			\left \{
			\begin{array}{rll}
			-\epsilon\mathrm{\Delta} u +\bm{\beta} \cdot \nabla u &= f,~~  & \textup{in } \Omega \\
			u &=0,~~ & \textup{on } \Gamma_{out} \\
			\frac{\partial u}{\partial \boldsymbol{n}} &= 0,~~ & \textup{on } \Gamma_{n},
			\end{array}
			\right.
			\end{equation}
			where $u$ is the unknown (temperature) field, $\epsilon=0.0001$ is the diffusion coefficient and $\bm{\beta}$ is the advection field. The geometry of the problem is as follows.
			First we have $5$ circular pores of radius $0.01$ located at points $\boldsymbol{x}_j = 0.5 \left( \cos (2 \pi j/5), \sin (2 \pi j/5) \right)$, $1 \leq j \leq 5$. The domain of interest is then defined as the square $[-10, 10]^2$ without the pores, i.e,   $\Omega:=[-10, 10]^2/\Omega_n$, with $\Omega_n:= \cup_{1\leq j \leq 5} \{ \boldsymbol{x} : \|\boldsymbol{x} - \boldsymbol{x}_j\| \leq 0.01 \}$. The boundaries $\Gamma_n$ and $\Gamma_{out}$ are taken as $\partial{\Omega_n}$ and $\partial{\Omega}/\partial{\Omega_n}$, respectively. Furthermore, $\Omega$ is (notationally) divided into the main region inside $[-1, 1]^2$, and the outer domain playing a role of a boundary layer. Finally, the force term $f$ is nonzero in the disc $\Omega_s:=\{ \boldsymbol{x} \in \Omega: \| \boldsymbol{x} \| \leq 0.025 \}$. The geometric setup of the problem is presented in~Figure\nobreakspace \ref {fig:Ex2_intial_problem_a}. 
			
			\begin{figure}[htp]
				\centering
				\begin{minipage}[c]{.4\textwidth}
					\centering
					\includegraphics[width=0.96\textwidth]{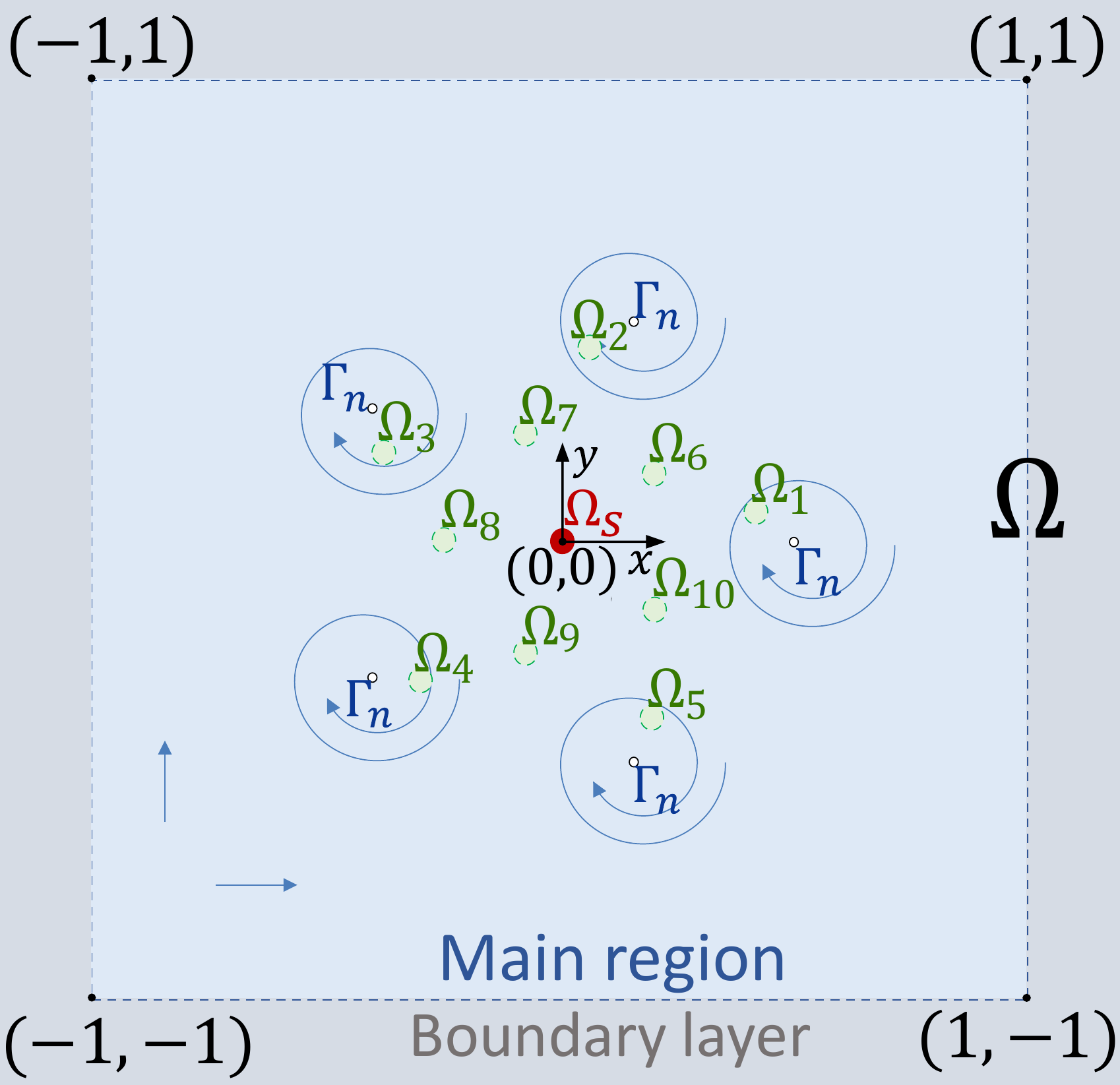}
				\end{minipage}%
				\hspace{0.05\textwidth}
				\begin{minipage}[c]{.4\textwidth}
					\centering
					\includegraphics[width=1\textwidth]{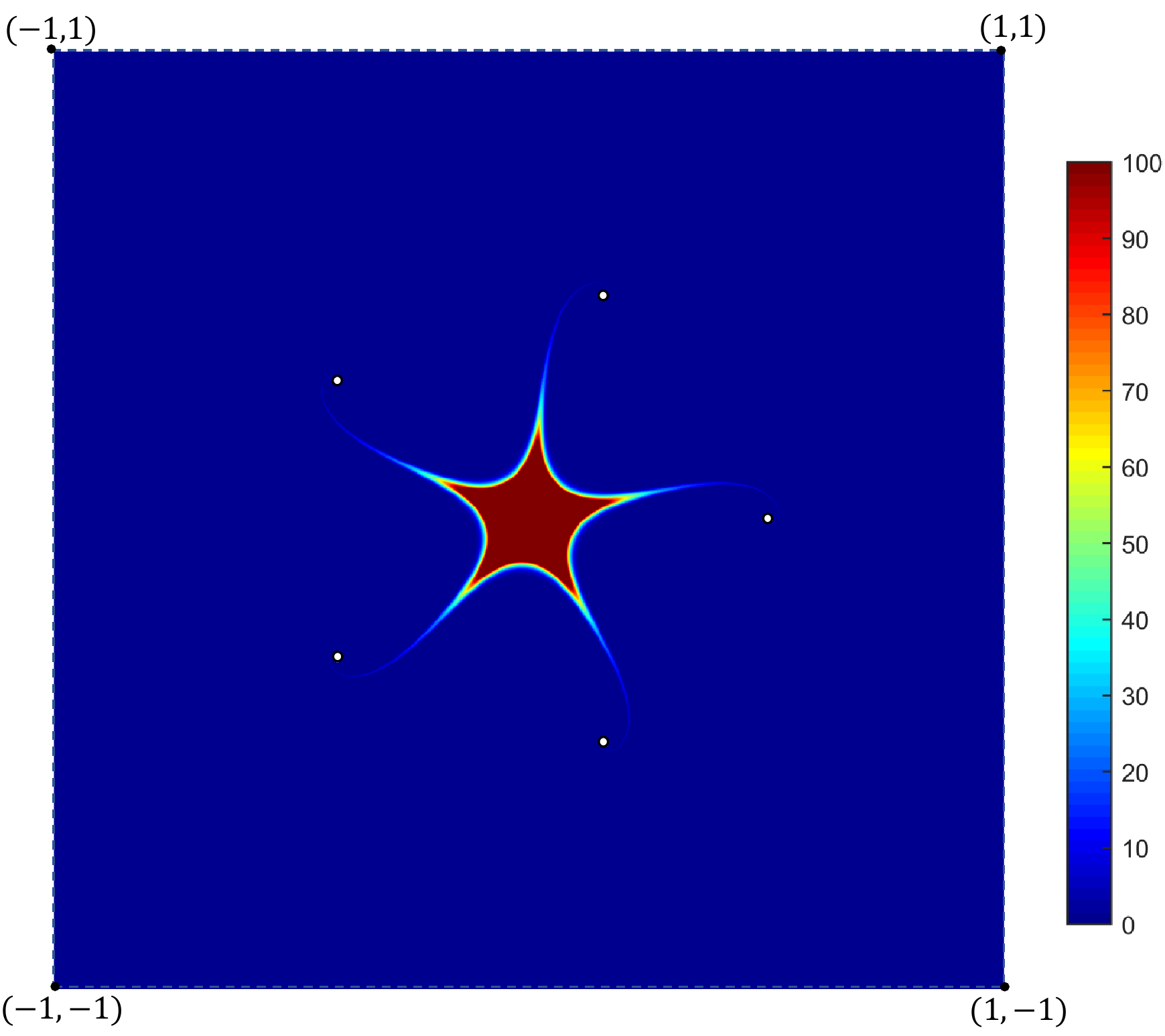}
				\end{minipage} \\ 
				\begin{minipage}[b]{.4\textwidth}
					\centering
					\subcaption{Geometry}
					\label{fig:Ex2_intial_problem_a}
				\end{minipage}%
				\begin{minipage}[b]{.4\textwidth}
					\subcaption{Snapshot at $\mu_s$}
					\label{fig:Ex2_intial_problem_b}
				\end{minipage} 
				\begin{minipage}[c]{.4\textwidth}
					\centering
					\includegraphics[width=1\textwidth]{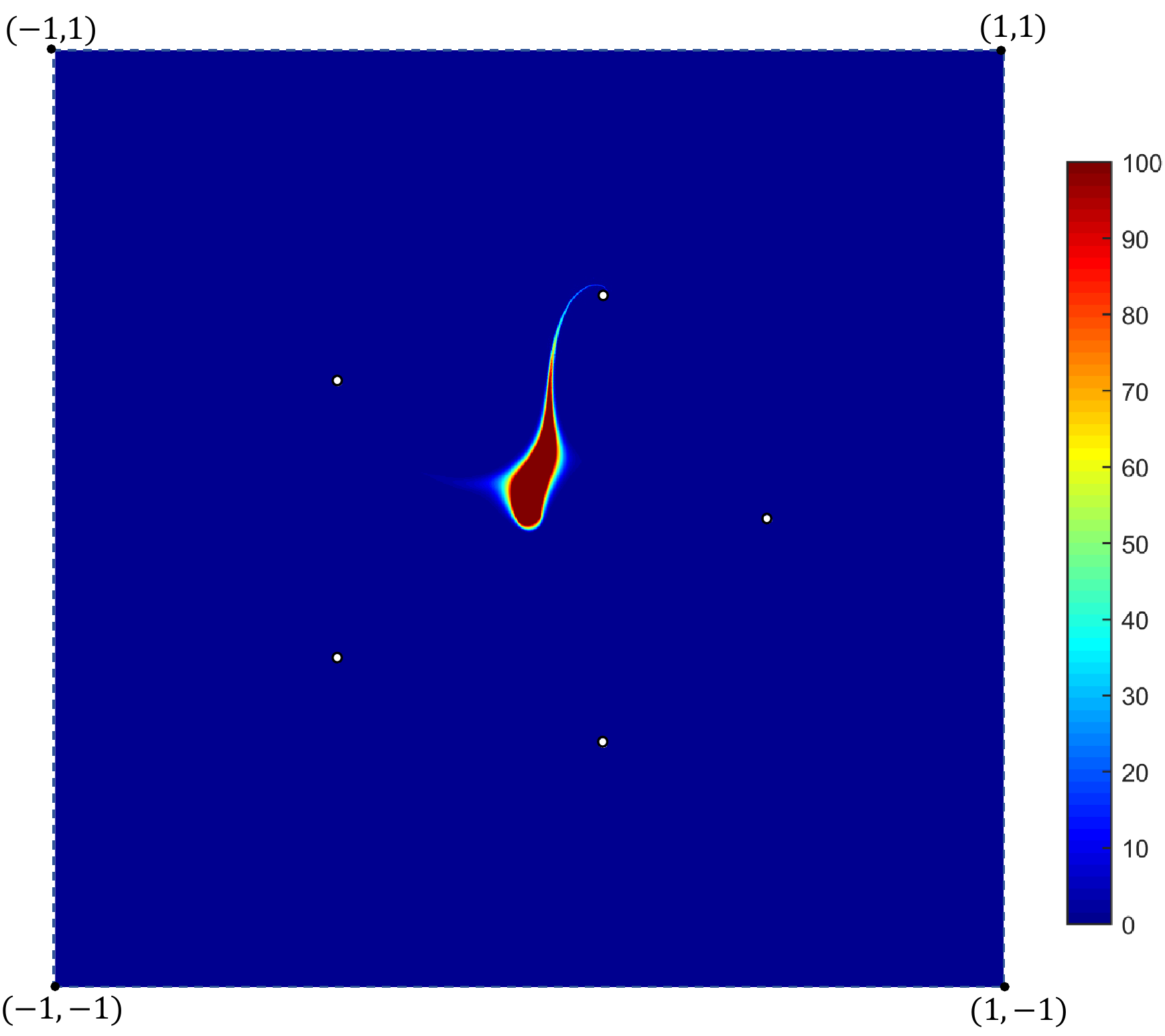}
				\end{minipage}%
				\hspace{0.073\textwidth}
				\begin{minipage}[c]{.4\textwidth}
					\centering
					\includegraphics[width=1\textwidth]{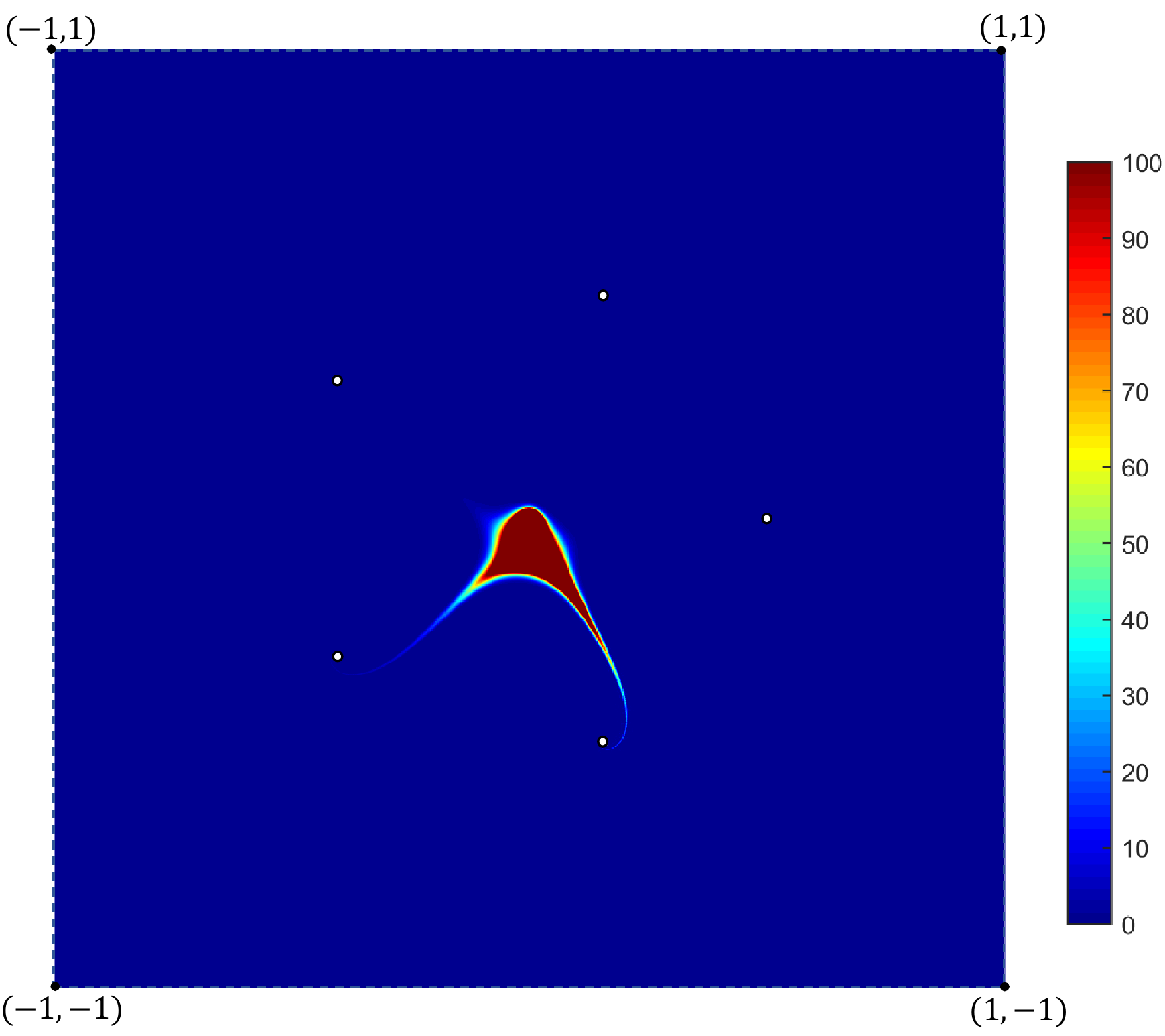}
				\end{minipage} \\ 
				\begin{minipage}[b]{.4\textwidth}
					\centering
					\subcaption{Random snapshot}
					\label{fig:Ex2_intial_problem_c}
				\end{minipage}%
				\begin{minipage}[b]{.4\textwidth}
					\subcaption{Random snapshot}
					\label{fig:Ex2_intial_problem_d}
				\end{minipage}
				\caption{(a) Geometry of the advection-diffusion problem. (b) The solution field $u$ for parameter value $\mu_s := (0,\allowbreak 0,\allowbreak 0.308,\allowbreak 0.308, \allowbreak 0.308,\allowbreak 0.308, \allowbreak 0.308,\allowbreak 0.616, \allowbreak 0.616,\allowbreak 0.616,\allowbreak 0.616,\allowbreak 0.616)$. (c)-(d) The solution field $u$  for two random samples from $\mathcal{P}$.}
				\label{fig:Ex2_intial_problem}
			\end{figure}

			The advection field is taken as a potential (divergence-free and curl-free) field  consisting of a linear combination of $12$ components,
			$$ \bm{\beta}(\boldsymbol{x}) = \mu_2 \cos(\mu_1) \hat{\boldsymbol{e}}_x + \mu_2 \sin(\mu_1)\hat{\boldsymbol{e}}_y +\sum^{10}_{i=1} \mu_i \bm{\beta}_i(\boldsymbol{x}),~~\boldsymbol{x} \in \Omega,$$
			where
			\begin{equation} \label{eq:beta_advection}
			\bm{\beta}_i (\boldsymbol{x}) = \left \{
			\begin{array}{rll}
			\frac{-\hat{\boldsymbol\bm{e}}_r(\boldsymbol{x}_{i})}{\|\boldsymbol{x} - \boldsymbol{x}_{i} \|} &\text{ for } 1 \leq i \leq 5\\
			\frac{-\hat{\boldsymbol{e}}_\theta(\boldsymbol{x}_{i-5})}{\|\boldsymbol{x} - \boldsymbol{x}_{i-5} \|}  &\text{ for } 6 \leq i \leq 10.
			\end{array}
			\right.
			\end{equation}
			The vectors $\hat{\boldsymbol{e}}_x$ and $\hat{\boldsymbol{e}}_y$ are the basis vectors of the Cartesian system of coordinates. The vectors  $\hat{\boldsymbol{e}}_r(\boldsymbol{x}_{j})$ and $\hat{\boldsymbol{e}}_\theta(\boldsymbol{x}_{j})$ are the basis vectors of the polar coordinate system with the origin at point $\boldsymbol{x}_{j}$, $1 \leq j \leq 5$. {From the physics perspective}, we have here a superposition of two uniform flows and five hurricane flows (each consisting of a sink and a rotational flow) centered at different locations. The source term is $$f(\boldsymbol{x})  =  \left \{ \begin{array}{rl}
			\frac{1}{\pi 0.025^2} &\text{ for } \boldsymbol{x} \in \Omega_s,\\ 0 &\text{ for } \boldsymbol{x} \in \Omega/ \Omega_s.\\
			\end{array} \right.$$

			We consider a multi-objective scenario, where one aims to approximate the average solution field $s^j(u)$, $1 \leq j \leq {10}$, inside sensor $\Omega_j$ having a form of a disc of radius $0.025$ located as in~Figure\nobreakspace \ref {fig:Ex2_intial_problem_a}.  The objective is to obtain sensor outputs for the parameter values $\mu := (\mu_1, \cdots, \mu_{12}) \in \mathcal{P}:= \{[0,~2 \pi] \times [0,~0.028] \times [0.308,~0.37]^5 \times [0.616,~0.678]^5 \}$.
			Figures\nobreakspace  \ref {fig:Ex2_intial_problem_a} to\nobreakspace  \ref {fig:Ex2_intial_problem_c}  present solutions $u(\mu)$ for few samples from $\mathcal{P}$.

			The discretization of the problem was performed with the classical finite element method. A nonuniform mesh was considered with finer elements near the pores of the hurricanes, and larger ones far from the pores such that each element's Peclet number inside $[-1,1]^2$ was larger than $1$ for any parameter value in $\mathcal{P}$. Moreover, it was revealed that for this benchmark the solution field outside the region $[-1,1]^2$ was practically equal to zero for all $\mu \in \mathcal{P}$. Therefore the outer region was discretized with coarse elements. For the discretization we used about $380000$ and $20000$ degrees of freedom in the main region and the outside boundary layer, respectively, which yielded approximately $400000$ degrees of freedom in total. 
			
			The solution space is equipped with {the} inner product
			\begin{equation*}
			{\langle\bv, \bw \rangle_U:= \langle {\nabla}v, {\nabla}w \rangle_{L^2},~\bv,\bw \in U,}
			\end{equation*}
			{that is the $H^1_0$ inner product for functions associated with vectors in $U$}. 
			
			For this problem, approximation of the solution with a fixed low-dimensional space is ineffective. 
			The problem has to be approached with non-linear approximation methods with  parameter-dependent approximation spaces. For this, the classical $hp$-refinement method is computationally intractable due to high dimensionality of the parameter domain, which makes the dictionary-based approximation to be the most pertinent choice.  
			
			The training and test sets $\mathcal{P}_\mathrm{train}$ and $\mathcal{P}_\mathrm{test}$ were respectively chosen as $20000$ and $1000$ uniform random samples from $\mathcal{P}$. Then, Algorithm\nobreakspace \ref {alg:sk_greedy_online} was employed to generate dictionaries of sizes $K=1500$, $K=2000$ and $K=2500$ for the dictionary-based approximation with $r=100$, $r=75$ and $r=50$ vectors, respectively. For comparison, we also performed a greedy reduced basis algorithm (based on sketched minres projection) to generate a fixed reduced approximation space, which in particular coincides with~Algorithm\nobreakspace \ref {alg:sk_greedy_online} with large enough $r$ (here $r=750$).  Moreover, for more efficiency (to reduce the number of online solutions) at $i$-th iteration of~Algorithm\nobreakspace \ref {alg:sk_greedy_online} and reduced basis algorithm instead of taking $\mu^{i+1}$ as a maximizer of  $\Delta^\bPhi(\bu_r(\mu);\mu)$ over $\mathcal{P}_{\mathrm{train}}$, we relaxed the problem to finding any parameter-value such that 
			\begin{equation} \label{eq:newargmax}
			\Delta^\bPhi(\bu_r(\mu^{i+1});\mu^{i+1}) \geq \max_{\mu \in \mathcal{P}_\mathrm{train}} \min_{1 \leq j\leq i} \Delta^\bPhi(\bu^j_r(\mu);\mu),
			\end{equation}
			where $\bu^j_r(\mu)$ denotes the solution obtained at the $j$-th iteration. Note that~\textup {(\ref {eq:newargmax})} improved the efficiency, yet yielding at least as accurate maximizer of the dictionary-based width (defined in~\textup {(\ref {eq:drwidth})}) as considering $\mu^{i+1}:=\arg \max_{\mu \in \mathcal{P}{\mathrm{train}}}\Delta^\bPhi(\bu_r(\mu);\mu)$. For the error certification purposes, each $250$ iterations the solution was computed on the whole training set and $\mu^{i+1}$ was taken as $\arg \max_{\mu \in \mathcal{P}{\mathrm{train}}}\Delta^\bPhi(\bu_r(\mu);\mu)$. Figure\nobreakspace \ref {fig:Ex2_1} depicts {the observed evolutions of the errors} in the greedy algorithms.
			
			\begin{figure}[h!]
				\centering
				\begin{subfigure}[b]{.4\textwidth}
					\centering
					\includegraphics[width=\textwidth]{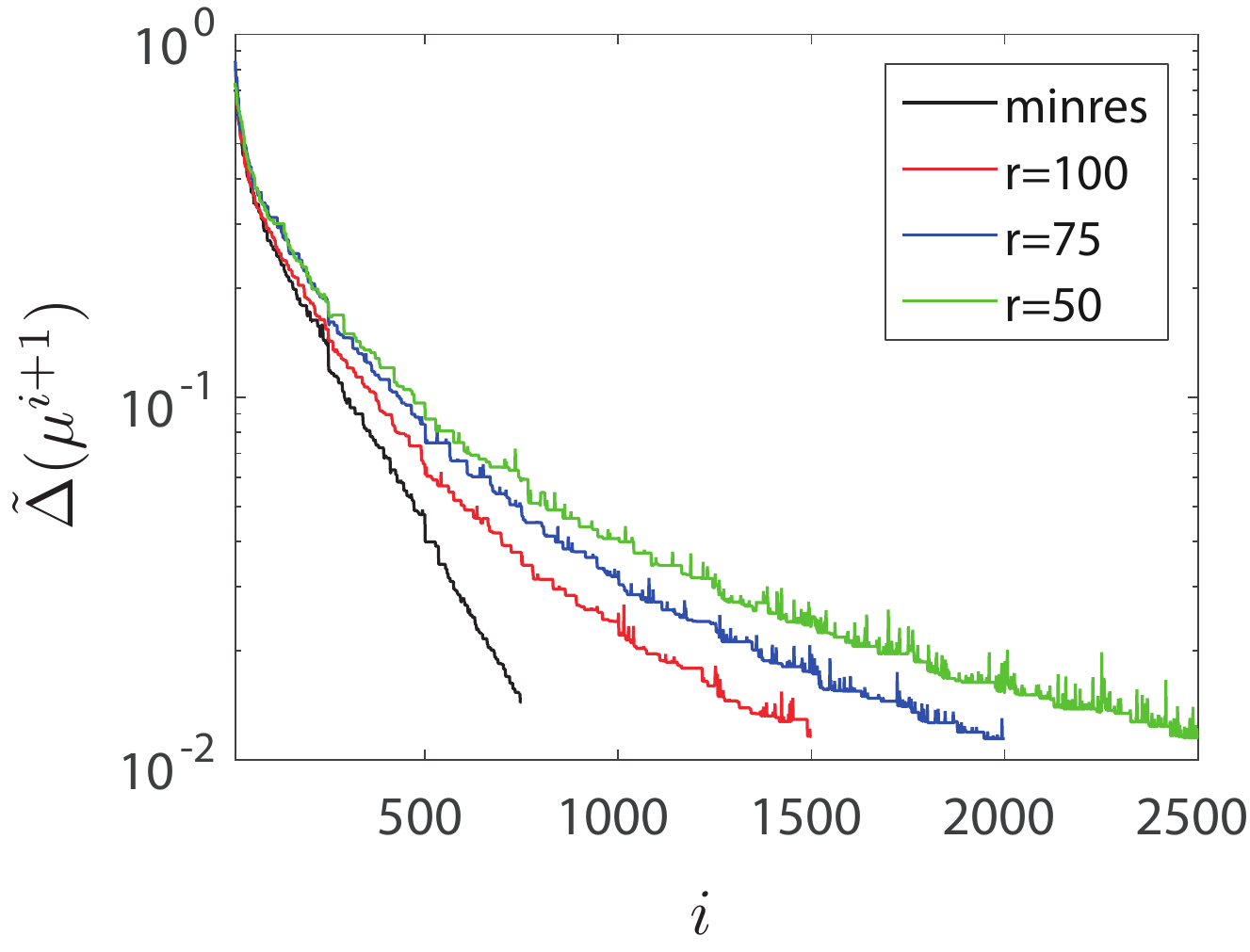}
					\caption{}
					\label{fig:Ex2_1a}
				\end{subfigure} \hspace{.01\textwidth}
				\begin{subfigure}[b]{.4\textwidth}
					\centering
					\includegraphics[width=\textwidth]{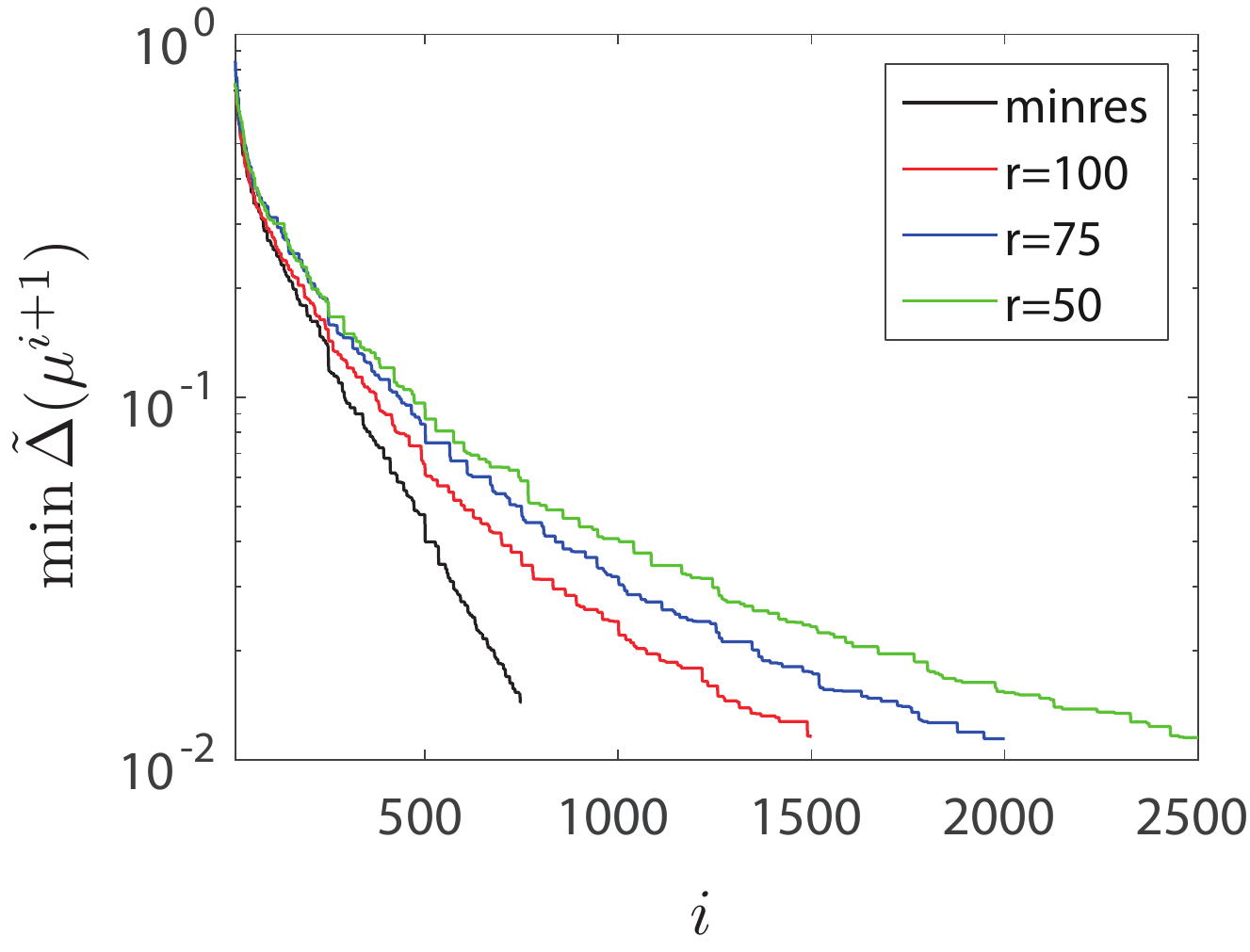}
					\caption{}
					\label{fig:Ex2_1b}
				\end{subfigure}	
				\caption{{Evolutions of the errors in}~Algorithm\nobreakspace \ref {alg:sk_greedy_online} for the dictionary generation for varying values of $r$, and the reduced basis greedy algorithm based on (sketched) minres projection. (a) The residual error  $\tilde{\Delta}(\mu^{i+1}):= \|\br(\bu_r(\mu^{i+1});\mu^{i+1})\|_{U'}/\| \bb \|_{U'}$. (b) The minimal value of the error at parameter value $\mu^{i+1}$ at the first $i$ iterations.}
				\label{fig:Ex2_1}
			\end{figure}
			
			We see that at the first $r$ iterations, the error decay for the dictionary generation practically coincides with the error decay of the reduced basis algorithm, which can be explained by the fact that the first $r$ iterations of the two algorithms coincide. {The convergence rates remain the same} for the reduced basis algorithm (even at high iterations), while they slowly subsequently degrade for dictionary-based approximation. The {latter} method still highly outperforms the former one, since its online computational cost scales only linearly with the number of iterations. Furthermore, for the dictionary-based approximation the convergence of the error is {moderately noisy.} The noise is primarily due to approximating the solutions of online sparse least-squares problems with the orthogonal greedy algorithm, for which the accuracy can be sensitive to the enrichment of the dictionary with new vectors.  The quality of online solutions could be improved by the usage of more sophisticated methods for sparse least-squares problems.    
			
			As it is clear from~Figure\nobreakspace \ref {fig:Ex2_1}, {the obtained dictionaries and the bases taken from them} provide approximations at least as accurate (on the training set) as the minres approximation with a fixed reduced space of dimension $r=750$. Yet, the dictionary-based approximations are much more online-efficient. Table\nobreakspace \ref {tab:runtimes} provides the online complexity and storage requirements for obtaining the dictionary-based solutions for all $\mu \in \mathcal{P}_\mathrm{test}$ (recall, $\#\mathcal{P}_\mathrm{test} = 1000$) with the orthogonal greedy algorithm (Algorithm\nobreakspace \ref {alg:skomp}) from a  sketch of size $k=8r$, and the sketched minres solutions with QR factorization with {Householder transformations (as in Matlab\textsuperscript{\textregistered} {R2017b} least-squares solver)} of the sketched reduced matrix in~\textup {(\ref {eq:skminresprojeff})} from a sketch of size $k=4r$. In particular, we see that the dictionary-based approximation with $r=75$ and $K=2000$ yields a gain in complexity by a factor of $15$ and memory consumption by a factor of $1.9$. In~Table\nobreakspace \ref {tab:runtimes} we also provide the associated runtimes and required RAM. It is revealed that the dictionary-based approximation with $K=2000$ and $r=75$ had an about $4$ times speedup. The difference between the gains in terms of complexity and runtime can be explained by {high} efficiency of the Matlab\textsuperscript{\textregistered} {R2017b} least-squares solver.
			It is important to note that even more considerable {enhance} of efficiency could be obtained by better exploitation of the structure of the dictionary-based reduced model, in particular, by representing the sketched matrix $\bV_K^\bTheta(\mu)$ in a format well suited for the  orthogonal greedy algorithm  (e.g., a product of a dense matrix by several sparse matrices similarly as in~\cite{le2016flexible,rubinstein2009double}).
			
			\begin{table}[tbhp]
				\caption{Computational cost of obtaining online solutions for all parameter values from the test set with the reduced basis method (based on sketched minres projection) and the dictionary-based approximations.}
				\label{tab:runtimes}
				\centering
				\scalebox{0.85}{
					\begin{tabular}{|l|r|r|r|r|} \hline
						& \multicolumn{1}{c|}{RB, $r=750$} & \multicolumn{1}{c|}{$K=1500,~r=100$} &  \multicolumn{1}{c|}{$K=2000,~r=75$} &  \multicolumn{1}{c|}{$K=2500,~r=50$}\\ \hline
						{Complexity in flops} & $3.1 \times 10^9$ & $0.27\times 10^9$  & $0.2\times 10^9$ & $0.12\times 10^9$ \\ [2pt] \hline
						{Storage in flns} & $2.9\times 10^8$ & $1.6\times 10^8$ & $1.6\times 10^8$& $1.3\times 10^8$  \\ \hline
						{CPU in $\mathrm{s}$} & $400$&  $124$ &  $113$ & $100$  \\ \hline
						{Storage in MB} & $234$ & $124$ & $124$& $104$  \\ \hline	
					\end{tabular}
				}
			\end{table}

			Further we provide statistical analysis of the dictionary-based approximation with $K=2000$ and $r=75$. For this we computed the associated dictionary-based solutions $\bu_r(\mu)$ for all parameter values in the test set, considering $\bTheta$ of varying sizes. The accuracy of an approximation is characterized by the quantities $\Delta_\mathcal{P}:=\max_{\mu \in \mathcal{P}_{\mathrm{test}}} \| \br(\bu_r(\mu); \mu) \|_{U'} /\|\bb\|_{U'}$, $e_\mathcal{P}:=\max_{\mu \in \mathcal{P}_{\mathrm{test}}} \|\bu(\mu) - \bu_r(\mu)\|_{U} / \max_{\mu \in \mathcal{P}_{\mathrm{test}}} \|\bu(\mu)\|_{U}$ and $e^{i}_\mathcal{P}=\max_{\mu \in \mathcal{P}_{\mathrm{test}}} |s^i(\bu(\mu))-s^i(\bu_r(\mu))|$, $1\leq i \leq {10}$. Figure\nobreakspace \ref {fig:Ex2_2} depicts the dependence of $\Delta_\mathcal{P}$, $e_\mathcal{P}$ and $e^{i}_\mathcal{P}$ (for few selected values of $i$) on the number of rows $k$ of $\bTheta$. For each value of $k$, the statistical properties of $\Delta_\mathcal{P}$, $e_\mathcal{P}$ and $e^{i}_\mathcal{P}$ were characterized with $20$ realizations of $\Delta_\mathcal{P}$, $e_\mathcal{P}$ and $e^{i}_\mathcal{P}$. It is observed that for $k=600$, the errors $\Delta_\mathcal{P}$ and $e_\mathcal{P}$ are concentrated around $0.03$ and $0.06$, respectively. 
			Moreover, for all tested $k \geq 600$ we obtained nearly the same errors, which suggests preservation of the quality of the dictionary-based approximation by its sketched version at $k=600$. A (moderate) deviation of the errors in the quantities of interest (even for very large $k$) can be explained by (moderately) low effectivity of representation of these errors with the error in $\| \cdot \|_U$, which we considered to control. 
			\begin{figure}[htp]
				\centering
				\begin{subfigure}[b]{.4\textwidth}
					\centering
					\includegraphics[width=\textwidth]{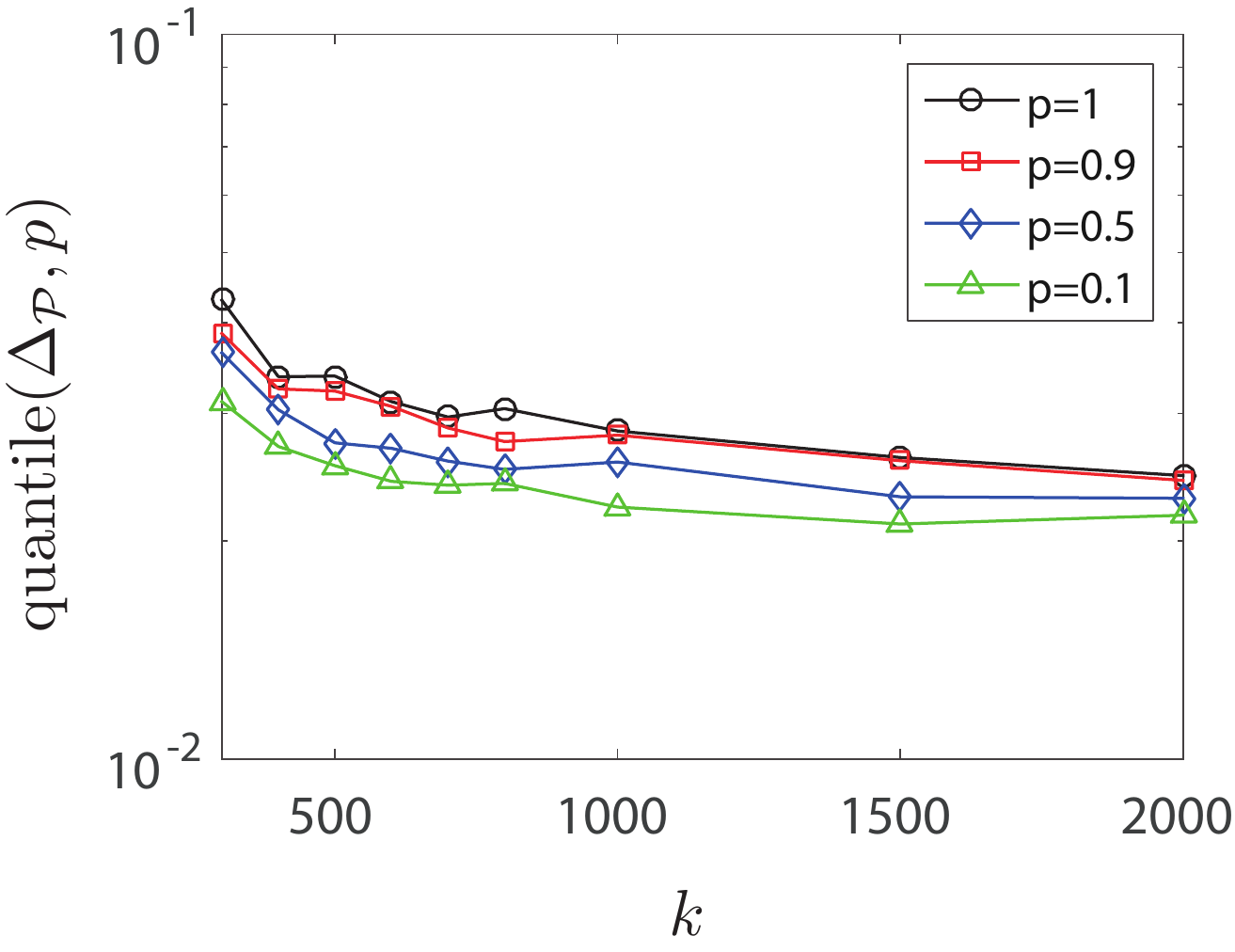}
					\caption{}
					\label{fig:Ex2_2a}
				\end{subfigure} \hspace{.01\textwidth}
				\begin{subfigure}[b]{.4\textwidth}
					\centering
					\includegraphics[width=\textwidth]{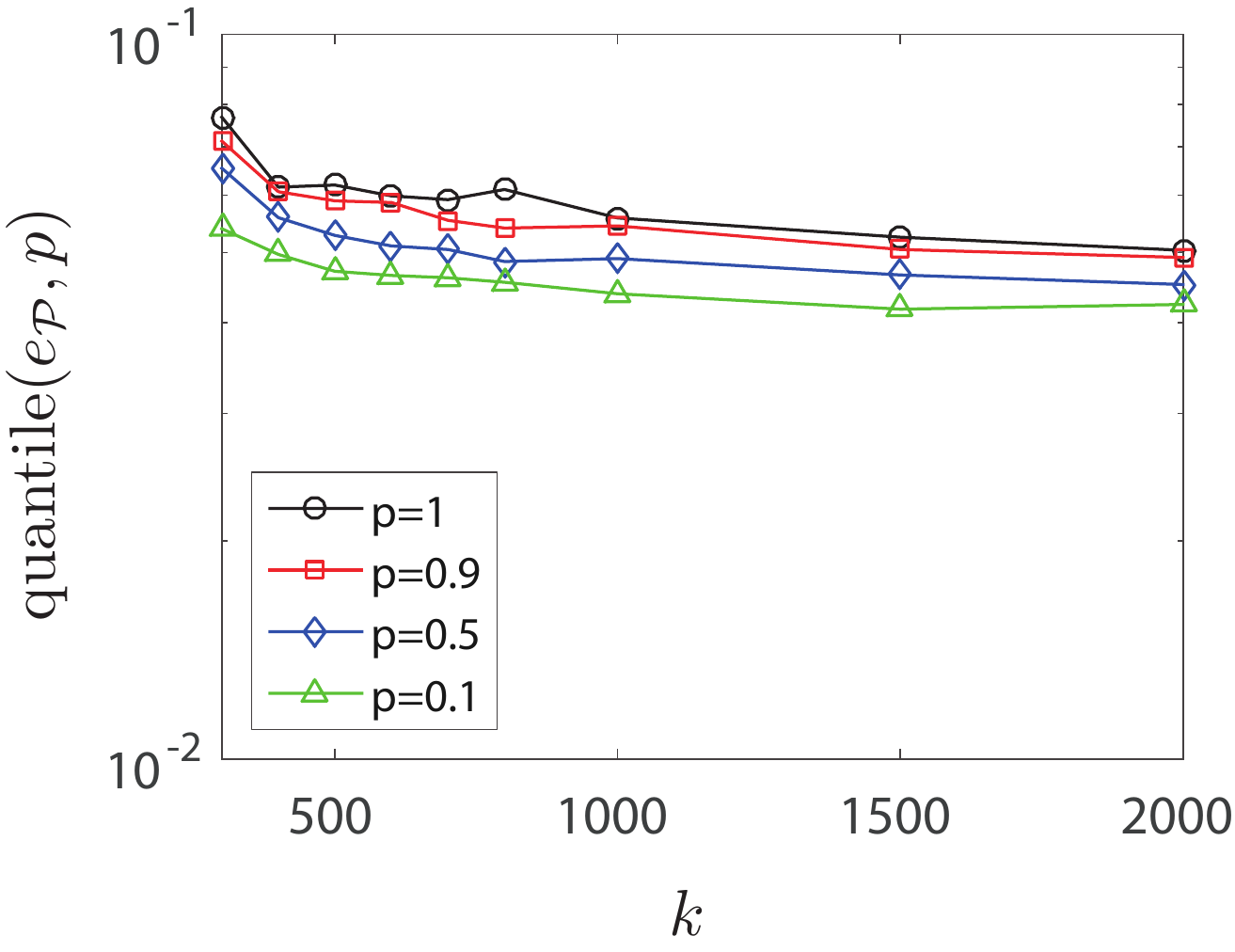}
					\caption{}
					\label{fig:Ex2_2b}
				\end{subfigure}	
				\begin{subfigure}[b]{.4\textwidth}
					\centering
					\includegraphics[width=\textwidth]{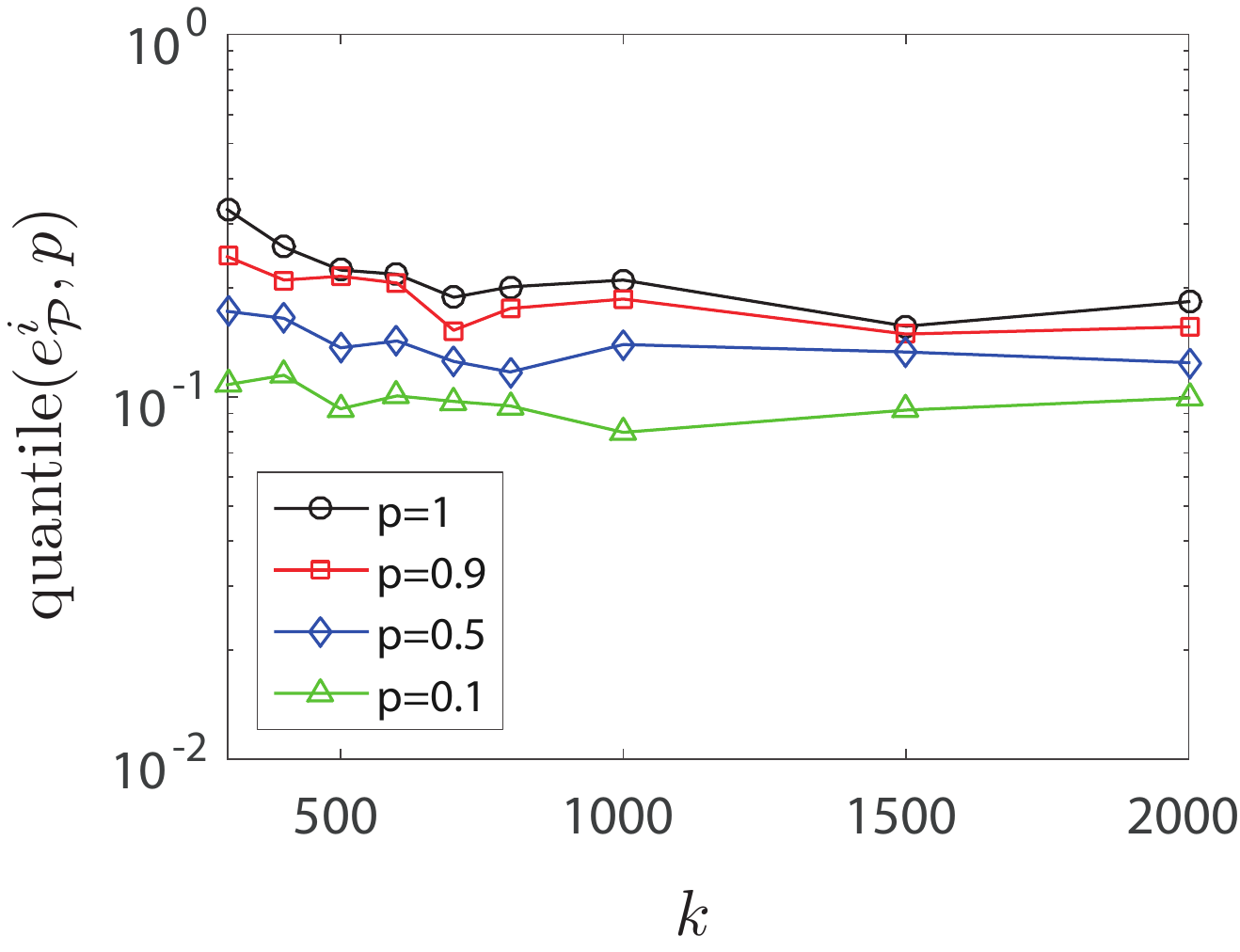}
					\caption{}
					\label{fig:Ex2_2c}
				\end{subfigure} \hspace{.01\textwidth}
				\begin{subfigure}[b]{.4\textwidth}
					\centering
					\includegraphics[width=\textwidth]{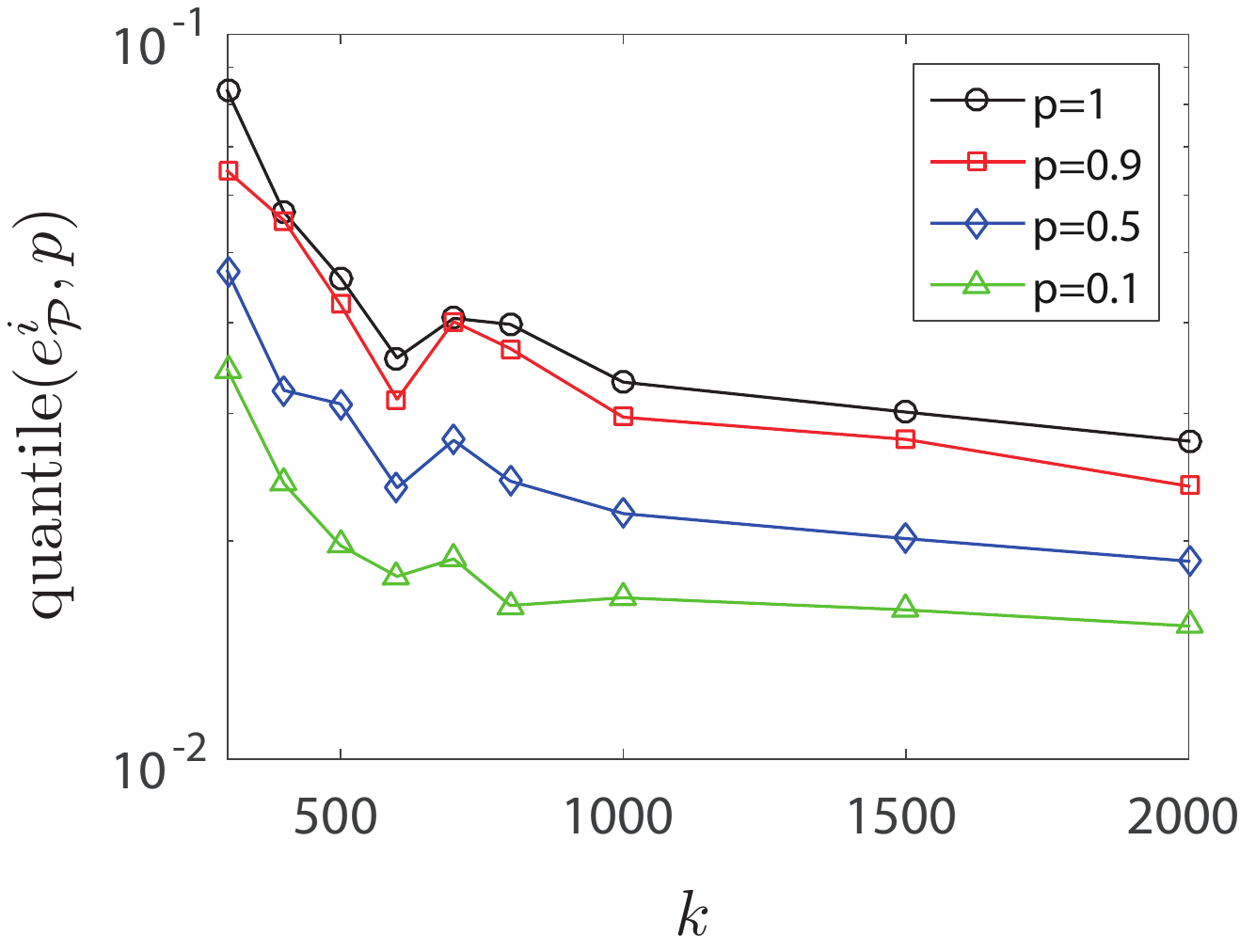}
					\caption{}
					\label{fig:Ex2_2e}
				\end{subfigure} \hspace{.01\textwidth}
				\caption{Quantiles of probabilities $p=1, 0.9, 0.5$ and $0.1$ over $20$ samples of the errors $\Delta_\mathcal{P}$, $e_\mathcal{P}$, $e^\mathrm{i}_\mathcal{P}$ of the dictionary-based approximation with $K=2000$ and $r=75$, versus the number of rows of $\bTheta$. (a) Residual error $\Delta_\mathcal{P}$. (b) Exact error $e_\mathcal{P}$. (c) Error $e^{i}_\mathcal{P}$ in the quantity of interest associated with sensor {$i=9$}. (d) Error $e^{i}_\mathcal{P}$ in the quantity of interest associated with sensor $i=1$.}
				\label{fig:Ex2_2}
			\end{figure}

			\section{Conclusion} \label{concl}	 
			
			In this article we have extended the methodology from~\cite{balabanov2019galerkin} to minres methods and proposed a novel nonlinear approximation method to tackle problems with a slow decay of Kolmogorov $r$-width.  
			The main ingredient of our approach is the approximation of the reduced model's solution from a random sketch, which entails drastic reduction of the computational costs and improvement of numerical stability. Precise conditions on the
			sketch to yield the (approximate) preservation of the quasi-optimality constants of the reduced model's solution are provided.  These conditions do not depend on the
			operator's properties, which implies robustness for ill-conditioned and non-coercive
			problems. Moreover, these
			conditions can be ensured with SRHT or
			Gaussian matrices of sufficiently large sizes depending only logarithmically on the probability of failure and on the cardinality of the dictionary (for dictionary-based approximation).
			
			We here also proposed efficient randomized methods for extraction of the quantity of interest {(see Appendix A)} and a posteriori certification of the reduced model's sketch. 
			These results along with the sketched minres projection can be used as a remedy of the drawbacks revealed in~\cite{balabanov2019galerkin}.

			The applicability of the proposed methodology was realized on two benchmark problems difficult to tackle with standard methods. The experiments on {the} invisibility cloak benchmark {confirmed} that random sketching {can indeed provide} high computational savings in both offline and online stages, and more numerical stability compared to the standard minres method while preserving the quality of the output.  It was {illustrated} experimentally that the random sketching technique {may be better suited} to minres methods than to Galerkin methods. Furthermore, the proposed procedure for the a posteriori certification of the sketch's quality was also experimentally tested. It yielded bounds for the dimension of random projections an order of magnitude less than the theoretical ones from~\cite{balabanov2019galerkin}.		
			In the advection-diffusion benchmark a slow-decay of the Kolmogorov $r$-width was revealed, which implied the necessity to use the dictionary-based approximation. It was verified that for this problem the dictionary-based approximation provided an {enhancement} of the online stage in more than an order of magnitude in complexity, in about $2$ times in terms of memory, and in about $4$ times in terms of runtime (compared to the sketched minres projection). Moreover, even higher computational savings could be obtained by representing the sketch of the dictionary in a more favorable format (e.g., as in~\cite{le2016flexible,rubinstein2009double}), which we leave for future research.

			\newpage
			
			\section*{Appendix A. Post-processing the reduced model's solution} \label{suppmaterial}
			\setcounter{section}{8}
			{In the paper} we presented a methodology for efficient computation of an approximate solution $\bu_r(\mu)$, or to be more precise, its coordinates in a certain basis, which can be the classical reduced basis for a fixed approximation space, or the dictionary vectors for dictionary-based approximation presented in~Section\nobreakspace \ref {dbminres}.  The approximate solution $\bu_r(\mu)$, however, is usually not what one should consider as the output. In fact, the amount of allowed online computations is highly limited and should be independent of the dimension of the full order model. Therefore outputting  $\mathcal{O}(n)$ bytes of data as $\bu_r(\mu)$ should be avoided when $\bu(\mu)$ is not the quantity of interest.
			
			Further, we shall consider an approximation with a single subspace $U_r$ noting that the presented approach can also be used for post-processing the dictionary-based  approximation from~Section\nobreakspace \ref {dbminres} (by taking $U_r$ as the subspace spanned by the dictionary vectors). Let $\bU_r$ be a matrix whose column vectors form a basis for $U_r$ and let $\ba_r(\mu)$ be the coordinates of $\bu_r(\mu)$ in this basis. 
			A general quantity of interest $s(\mu):=l(\bu(\mu); \mu)$ can be approximated by $s_r(\mu):=l(\bu_r(\mu); \mu)$. Further, let us assume a linear case where  $l(\bu(\mu); \mu) := \langle \bl(\mu), \bu(\mu) \rangle $ with $\bl(\mu) \in U'$ being the extractor of the quantity of interest. Then 
			\begin{equation} \label{eq:quantity}
			s_r(\mu)=\langle \bl(\mu), \bu_r(\mu) \rangle=\bl_r(\mu)^\mathrm{H} \ba_{r}(\mu),
			\end{equation} 
			where $\bl_r(\mu):=\bU_r^\mathrm{H} \bl(\mu)$. 
			
			\begin{remark} \label{rmk:postsolext}
				In general, our approach can be used for estimating an inner product between arbitrary parameter-dependent vectors. The possible applications include efficient estimation of the primal-dual correction and an extension to quadratic quantities of interest. In particular, the estimation of the primal-dual correction can be obtained by replacing $\bl(\mu)$ by $\br(\bu_r(\mu); \mu)$ and  $\bu_r(\mu)$ by $\bv_r(\mu)$ in~\textup {(\ref {eq:quantity})}, where $\bv_r(\mu) \in U$ is a reduced basis (or dictionary-based) approximate solution to the adjoint problem. A quadratic output quantity of interest has the form $l(\bu_r(\mu);\mu):=\langle \bL(\mu) \bu_r(\mu) + \bl(\mu), \bu_r(\mu) \rangle$, where $\bL(\mu):U \to U'$ and $\bl(\mu) \in U'$. Such $l(\bu_r(\mu);\mu)$ can be readily derived from~\textup {(\ref {eq:quantity})} by replacing $\bl(\mu)$ with $\bL(\mu) \bu_r(\mu) + \bl(\mu)$.
			\end{remark}
			The affine factors of $\bl_r(\mu)$ should be first precomputed in the offline stage and then used for online evaluation of $\bl_r(\mu)$ for each parameter value with a computational cost independent of the dimension of the original problem. The offline computations required for evaluating the affine factors of $\bl_r(\mu)$, however, can still be {too expensive or even unfeasible to perform.} Such a scenario may arise when using a high-dimensional approximation space (or a dictionary), when the extractor $\bl(\mu)$ has many (possibly expensive to maintain) affine terms, or when working in an extreme computational environment, e.g., with data streamed or distributed among multiple workstations. In addition, evaluating $\bl_r(\mu)$ from the affine expansion as well as evaluating $\bl_r(\mu)^\mathrm{H} \ba_{r}(\mu)$ itself can be subject to round-off errors {(especially when $\bU_r$ is ill-conditioned and may not be orthogonalized)}. Further, we shall provide a (probabilistic) way for estimating $s_r(\mu)$ with a reduced computational cost and better numerical stability. As the core we take the idea from~\cite[Section 4.3]{balabanov2019galerkin} proposed as a workaround to expensive offline computations for the evaluation of the primal-dual correction. 
			
			\begin{remark} \label{rmk:semiinner2}
				{
					The spaces $U$ and $U'$ are equipped with inner products $\langle \cdot, \cdot \rangle_U$ and $\langle \cdot, \cdot \rangle_{U'}$ (defined by matrix $\bR_U$), which are used for controlling the accuracy of the approximate solution $\bu_r(\mu)$. In general, $\bR_U$ is chosen according to both the operator $\bA(\mu)$ and the extractor $\bl(\mu)$ of the quantity of interest. The goal of this section, however, is only the estimation of the quantity of interest from the given $\bu_r(\mu)$. Consequently, for many problems it can be more pertinent to use here a different $\bR_U$ than the one employed for obtaining and characterizing $\bu_r(\mu)$. {The choice for $\bR_U$ should be done according to  $\bl(\mu)$ (independently of $\bA(\mu)$).} For instance, for discretized parametric PDEs, if $\bl(\mu)$ represents an integral of the solution field over the spatial domain then it is natural to choose $\langle \cdot, \cdot \rangle_U$ corresponding to {the} $L^2$ inner product. Moreover, $\langle \cdot, \cdot \rangle_U$ is required to be an inner product only on a certain subspace of interest, {which means that $\bR_U$ may be a positive semi-definite  matrix.} This consideration can be particularly helpful when the quantity of interest depends only on the restriction of the solution field to a certain subdomain. In such a case, $\langle \cdot, \cdot \rangle_U$ can be chosen to correspond with an inner product between restrictions of functions to this subdomain. The extension of random sketching for estimation of semi-inner products is straightforward (see~Remark\nobreakspace \ref {rmk:semiinner}).  }
			\end{remark}
			\begin{remark} \label{rmk:semiinner}
				{{Let us outline the extension of the methodology to the case where $\langle \cdot, \cdot \rangle_{U}$ is not definite. Let us assume that $\langle \cdot, \cdot \rangle_{U}$ is an inner product on a subspace $W \subseteq U$ of interest.  Then, it follows that $W':=\{ \bR_U \bx : \bx \in W \}$ can be equipped with $\langle \cdot, \cdot \rangle_{U'}:= \langle  \cdot, \bR_U^{\dagger} \cdot \rangle$, where $\bR_U^{\dagger}$ is {a} pseudo-inverse of $\bR_U$. {Such products} $\langle \cdot, \cdot \rangle_{U}$ and $\langle \cdot, \cdot \rangle_{U'}$ can be approximated by
						\begin{equation} \label{eq:thetadef2}
						\langle \cdot, \cdot \rangle^{\bTheta}_{U}:= \langle \bTheta \cdot, \bTheta \cdot \rangle, \textup{ and }
						\langle \cdot, \cdot \rangle^{\bTheta}_{U'} := \langle \bTheta \bR_U^{\dagger} \cdot, \bTheta \bR_U^{\dagger} \cdot \rangle.
						\end{equation} 
						This can be useful for the estimation of a (semi-)inner product between parameter-dependent vectors (see~Remark\nobreakspace \ref {rmk:semiinner2}).  
				}}
			\end{remark}

			\subsection{Approximation of the quantity of interest} 
			
			An efficiently computable and accurate estimation of $s_r(\mu)$ can be obtained in two phases. In the first phase, the manifold  $\mathcal{M}_r := \{ \bu_r(\mu) : \mu \in \mathcal{P} \}$ is (accurately enough) approximated with a subspace $W_p:=\mathrm{span}(\bW_p) \subset U$, which is spanned by an efficient to multiply {(i.e., sparse or low-dimensional)} matrix $\bW_p$. This matrix can be selected a priori or obtained depending on $\mathcal{M}_r$. 
			In~Section\nobreakspace \ref {effspaces} we shall provide some strategies for choosing or computing the columns for $\bW_p$. The appropriate strategy should be selected depending on the particular problem and computational architecture. 
			Further, the solution vector $\bu_r(\mu)$ is approximated by its orthogonal projection $\bw_p(\mu):= \bW_p \bc_p(\mu)$ on $W_p$.  The coordinates $\bc_p(\mu)$ can be obtained from $\ba_r(\mu)$ by
			\begin{equation}
			\bc_p(\mu) = \bH_p \ba_r(\mu),
			\end{equation}
			where $\bH_p := [\bW^\mathrm{H}_p \bR_U \bW_p]^{-1} \bW^\mathrm{H}_p \bR_U \bU_r$. Note that since $\bW_p$ is efficient to multiply by, the matrix $\bH_p$ can be efficiently precomputed in the offline stage. We arrive {at} the following estimation of $s_r(\mu)$: 
			\begin{equation} \label{eq:approxquantity}
			s_r(\mu) \approx  \langle \bl(\mu), \bw_p(\mu) \rangle = \bl^\star_r(\mu)^\mathrm{H} \ba_r(\mu), 
			\end{equation}
			where $\bl^\star_r(\mu)^\mathrm{H}:= \bl(\mu)^\mathrm{H} \bW_p \bH_p$. Unlike $\bl_r(\mu)$, the affine factors of $\bl^\star_r(\mu)$ can now be efficiently precomputed thanks to the structure of $\bW_p$.
			
			In the second phase of the algorithm, the precision of~\textup {(\ref {eq:approxquantity})} is improved with a sketched (random) correction associated with an $U \to \ell_2$ subspace embedding $\bTheta$:  
			\begin{equation}
			\begin{split}
			s_r(\mu) &= \langle \bl(\mu), \bw_p(\mu) \rangle +\langle \bl(\mu), \bu_r(\mu) - \bw_p(\mu) \rangle \\ &\approx \langle \bl(\mu), \bw_p(\mu) \rangle + \langle \bR^{-1}_{U} \bl(\mu), \bu_r(\mu) - \bw_p(\mu) \rangle^\bTheta_U =: s^\star_r(\mu).
			\end{split}
			\end{equation}
			In practice, $s^\star_r(\mu)$ can be efficiently evaluated using the following relation:
			\begin{equation} \label{eq:quantityapprox}
			s^\star_r(\mu) = [\bl^\star_r(\mu)^\mathrm{H} + {}_{\Delta}\bl^\star_r(\mu)^\mathrm{H} ]\ba_r(\mu), 
			\end{equation}
			where the affine terms of ${}_{\Delta}\bl^\star_r(\mu)^\mathrm{H} := \bl^\bTheta(\mu)^\mathrm{H} (\bU^\bTheta_r - \bW^\bTheta_p \bH_p)$ can be precomputed from the $\bTheta$-sketch of $\bU_r$, a sketched matrix $\bW^\bTheta_p:=\bTheta \bW_p$ and the matrix $\bH_p$ with a negligible computational cost. 
			\begin{proposition} \label{thm:errorbounds_r}
				If $\bTheta$ is an  $( \varepsilon, \delta, 1)$ oblivious $U \to \ell_2$ subspace embedding, 
				\begin{equation} \label{eq:quantitybound}
				|s_r(\mu) - s^\star_r(\mu)| \leq \varepsilon\|\bl (\mu)\|_{U'} \| \bu_r(\mu) - \bw_p(\mu) \|_U
				\end{equation}	
				holds for a fixed parameter $\mu \in \mathcal{P}$ with probability at least $1-2\delta$.
			\end{proposition}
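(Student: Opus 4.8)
The plan is to recognise $s_r(\mu)-s_r^\star(\mu)$ as a single discrepancy between an inner product and its sketched version evaluated on two \emph{fixed} ($\bTheta$-independent) vectors, and then to bound that discrepancy using only the one-dimensional ($d=1$) embedding property supplied by the hypothesis, via polarization. First I would rewrite the error: since $\langle\bl(\mu),\bv\rangle=\langle\bR_U\bR_U^{-1}\bl(\mu),\bv\rangle=\langle\bR_U^{-1}\bl(\mu),\bv\rangle_U$ for all $\bv\in U$, the definition of $s_r^\star(\mu)$ yields
\begin{equation*}
s_r(\mu)-s_r^\star(\mu)=\langle\bR_U^{-1}\bl(\mu),\bu_r(\mu)-\bw_p(\mu)\rangle_U-\langle\bR_U^{-1}\bl(\mu),\bu_r(\mu)-\bw_p(\mu)\rangle_U^{\bTheta}.
\end{equation*}
Put $\bx:=\bR_U^{-1}\bl(\mu)$ and $\by:=\bu_r(\mu)-\bw_p(\mu)$; both are deterministic. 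By definition of the dual norm $\|\bx\|_U=\|\bR_U^{-1}\bl(\mu)\|_U=\|\bl(\mu)\|_{U'}$, so~(\ref{eq:quantitybound}) is equivalent to $\bigl|\langle\bx,\by\rangle_U-\langle\bx,\by\rangle_U^{\bTheta}\bigr|\le\varepsilon\,\|\bx\|_U\,\|\by\|_U$, which I now aim to establish with probability at least $1-2\delta$. If $\bx=\bnull$ or $\by=\bnull$ both sides are zero; otherwise, after rescaling, assume $\|\bx\|_U=\|\by\|_U=1$.

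Next I would polarize. For real scalars, $\langle\bx,\by\rangle_U=\tfrac14\bigl(\|\bx+\by\|_U^2-\|\bx-\by\|_U^2\bigr)$, and the same identity holds with $\|\cdot\|_U$ replaced by the semi-norm $\|\cdot\|_U^{\bTheta}$, since $\langle\cdot,\cdot\rangle_U^{\bTheta}$ is a symmetric bilinear semi-inner product. Because $\bTheta$ is a $(\varepsilon,\delta,1)$ oblivious $U\to\ell_2$ subspace embedding, Definition~\ref{def:oblepsilon_embedding} together with Definition~\ref{def:epsilon_embedding} specialised to one-dimensional subspaces gives, for each fixed $\bz$, that $\bigl|\,\|\bz\|_U^2-(\|\bz\|_U^{\bTheta})^2\,\bigr|\le\varepsilon\|\bz\|_U^2$ with probability at least $1-\delta$. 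Applying this to $\bz=\bx+\by$ and to $\bz=\bx-\by$ and taking a union bound, with probability at least $1-2\delta$ both inequalities hold simultaneously, whence
\begin{equation*}
\bigl|\langle\bx,\by\rangle_U^{\bTheta}-\langle\bx,\by\rangle_U\bigr|\le\tfrac14\Bigl(\varepsilon\|\bx+\by\|_U^2+\varepsilon\|\bx-\by\|_U^2\Bigr)=\tfrac{\varepsilon}{4}\cdot 2\bigl(\|\bx\|_U^2+\|\by\|_U^2\bigr)=\varepsilon,
\end{equation*}
the middle step being the parallelogram law. Undoing the normalisation multiplies the right-hand side by $\|\bx\|_U\|\by\|_U=\|\bl(\mu)\|_{U'}\|\bu_r(\mu)-\bw_p(\mu)\|_U$, which is precisely~(\ref{eq:quantitybound}).

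I expect the genuinely delicate point to be the complex case, where the two-term polarization must be replaced by the four-term identity $\langle\bx,\by\rangle_U=\tfrac14\sum_{k=0}^{3}i^{k}\|\bx+i^{k}\by\|_U^2$ ($i$ the imaginary unit), valid for semi-inner products as well, so that one would apply the hypothesis to the up to four one-dimensional subspaces $\mathrm{span}\{\bx+i^{k}\by\}$. A naive accounting then gives probability $1-4\delta$ and a constant slightly larger than $\varepsilon$, because one must separately control the real and imaginary parts of the discrepancy. The route to the stated $(\varepsilon,1-2\delta)$ is to first replace $\by$ by $e^{i\vartheta}\by$, with $\vartheta$ chosen so that $\langle\bx,\by\rangle_U\ge 0$ — this changes neither $\|\by\|_U$ nor the modulus $\bigl|\langle\bx,\by\rangle_U-\langle\bx,\by\rangle_U^{\bTheta}\bigr|$ — and then to argue carefully which pairs of one-dimensional events actually enter the bound. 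This bookkeeping, and the question of whether the clean constants in the statement implicitly assume $\mathbb{K}=\mathbb{R}$ (as happens in the related Proposition~\ref{thm:certvectors}, which carries a $1-4\delta^*$), is where I would spend most of the effort; everything else reduces to the routine polarization above.
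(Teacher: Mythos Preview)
For the real case your argument is exactly the paper's: normalise, polarise with $\bx\pm\by$, apply the one-dimensional embedding property to each of these two fixed lines, union-bound, and finish with the parallelogram law.

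For the complex case the paper does something you did not land on. Your idea of rotating $\by$ by a \emph{deterministic} phase so that $\langle\bx,\by\rangle_U\ge 0$ does not help: the sketched term $\langle\bx,\by\rangle_U^{\bTheta}$ has no reason to become real under that same rotation, so the discrepancy stays complex and you are still stuck with a four-term polarization and (at best) a $1-4\delta$ bound. The paper instead rotates by the unit complex number
\[
\omega\;:=\;\frac{\langle\bx,\by\rangle_U-\langle\bx,\by\rangle_U^{\bTheta}}{\bigl|\langle\bx,\by\rangle_U-\langle\bx,\by\rangle_U^{\bTheta}\bigr|},
\]
chosen precisely so that $\langle\bx,\omega\by\rangle_U-\langle\bx,\omega\by\rangle_U^{\bTheta}$ is \emph{real}. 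The imaginary-part contributions in the complex polarization identity then cancel identically between the true and sketched inner products, and one is left with only the two norm terms $\|\bx\pm\omega\by\|_U^2$ versus their sketched counterparts --- hence $1-2\delta$ and the clean constant $\varepsilon$, uniformly over $\mathbb{K}=\mathbb{R}$ or $\mathbb{C}$.

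Your caution about the complex case was well placed, however: this $\omega$ depends on $\bTheta$, so the lines $\mathrm{span}(\bx\pm\omega\by)$ are not fixed in advance, whereas the oblivious-embedding guarantee in Definition~\ref{def:oblepsilon_embedding} is stated for subspaces chosen independently of $\bTheta$. The paper applies the union bound to these two $\bTheta$-dependent lines without comment. In the real case this issue disappears, since whatever the sign of $\omega$ the pair $\{\bx+\omega\by,\bx-\omega\by\}$ coincides with the fixed pair $\{\bx+\by,\bx-\by\}$ --- which is exactly the argument you wrote.
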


			\begin{proposition} \label{thm:errorbounds_r2}
				Let $L \subset U$ denote a subspace containing $\{\bR^{-1}_{U} \bl(\mu): \mu \in \mathcal{P} \}$. 
				If $\bTheta$ is an $\varepsilon$-embedding for {$L+U_r+W_p$}, then~\textup {(\ref {eq:quantitybound})} holds for all $\mu \in \mathcal{P}$.
			\end{proposition}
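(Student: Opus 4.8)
The plan is to reduce the statement to a single application of the $\varepsilon$-embedding property in Definition~\ref{def:epsilon_embedding}; this is the deterministic counterpart of Proposition~\ref{thm:errorbounds_r}, with the oblivious concentration inequality replaced by an honest subspace embedding for the fixed space $L+U_r+W_p$. First I would rewrite the error $s_r(\mu)-s^\star_r(\mu)$ in a form that exhibits it as a discrepancy between an exact and a sketched inner product. By definition $s_r(\mu)=\langle \bl(\mu),\bu_r(\mu)\rangle$ and $s^\star_r(\mu)=\langle \bl(\mu),\bw_p(\mu)\rangle+\langle \bR^{-1}_U\bl(\mu),\bu_r(\mu)-\bw_p(\mu)\rangle^\bTheta_U$, so
\begin{equation*}
s_r(\mu)-s^\star_r(\mu)=\langle \bl(\mu),\bu_r(\mu)-\bw_p(\mu)\rangle-\langle \bR^{-1}_U\bl(\mu),\bu_r(\mu)-\bw_p(\mu)\rangle^\bTheta_U.
\end{equation*}
Using the definition $\langle \cdot,\cdot\rangle_U=\langle \bR_U\cdot,\cdot\rangle$ one has $\langle \bl(\mu),\bu_r(\mu)-\bw_p(\mu)\rangle=\langle \bR^{-1}_U\bl(\mu),\bu_r(\mu)-\bw_p(\mu)\rangle_U$, hence the error equals $\langle \bx,\by\rangle_U-\langle \bx,\by\rangle^\bTheta_U$ with $\bx:=\bR^{-1}_U\bl(\mu)$ and $\by:=\bu_r(\mu)-\bw_p(\mu)$.

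The second step is to locate $\bx$ and $\by$ in the embedded subspace. Since $L$ contains $\{\bR^{-1}_U\bl(\mu):\mu\in\mathcal{P}\}$ we have $\bx\in L\subseteq L+U_r+W_p$, and since $\bu_r(\mu)\in U_r$ and $\bw_p(\mu)\in W_p$ we have $\by\in U_r+W_p\subseteq L+U_r+W_p$. As $\bTheta$ is by hypothesis a $U\to\ell_2$ $\varepsilon$-subspace embedding for $L+U_r+W_p$, Definition~\ref{def:epsilon_embedding} applied to $\bx,\by$ gives $|\langle \bx,\by\rangle_U-\langle \bx,\by\rangle^\bTheta_U|\le \varepsilon\|\bx\|_U\|\by\|_U$. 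Note no probability enters here: once $\bTheta$ is an $\varepsilon$-embedding for this (parameter-independent) subspace, the bound holds simultaneously for every $\mu\in\mathcal{P}$.

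Finally I would simplify the right-hand side. From $\langle \cdot,\cdot\rangle_{U'}=\langle \cdot,\bR^{-1}_U\cdot\rangle$ we get $\|\bx\|_U^2=\langle \bR_U\bR^{-1}_U\bl(\mu),\bR^{-1}_U\bl(\mu)\rangle=\langle \bl(\mu),\bR^{-1}_U\bl(\mu)\rangle=\|\bl(\mu)\|_{U'}^2$, while $\|\by\|_U=\|\bu_r(\mu)-\bw_p(\mu)\|_U$ by definition, so the bound becomes exactly~\textup{(\ref{eq:quantitybound})}. There is no real obstacle in this argument; the only points requiring a little care are the identity $\|\bR^{-1}_U\bl(\mu)\|_U=\|\bl(\mu)\|_{U'}$ and the membership $\by\in U_r+W_p$ (which is why $W_p$, not merely $U_r$, must appear in the subspace that $\bTheta$ embeds), and both are immediate from the definitions given in Section~\ref{prel} and Appendix~A.
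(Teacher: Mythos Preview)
Your proof is correct. It is also genuinely more direct than the route the paper takes. The paper proves Proposition~\ref{thm:errorbounds_r2} by recycling the parallelogram-identity computation from the proof of Proposition~\ref{thm:errorbounds_r}: there $\bTheta$ is only an $(\varepsilon,\delta,1)$ oblivious embedding, so one only knows it preserves norms on individual one-dimensional lines, and the inner-product error must be extracted from $\|\bx\pm\omega\by\|_U^2$ via polarization. The paper then remarks that when $\bTheta$ is an $\varepsilon$-embedding for all of $L+U_r+W_p$, the needed one-dimensional spans $\mathrm{span}(\bx\pm\omega\by)$ lie inside it, so the same computation goes through deterministically. You instead observe that Definition~\ref{def:epsilon_embedding} is already stated for \emph{inner products} of arbitrary pairs $\bx,\by$ in the embedded subspace, so once $\bx=\bR_U^{-1}\bl(\mu)\in L$ and $\by=\bu_r(\mu)-\bw_p(\mu)\in U_r+W_p$ are located there, the bound is immediate with no polarization step. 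Your route is shorter and makes the structural point---that the result is a single invocation of the subspace-embedding definition---more transparent; the paper's route has the minor virtue of explaining why the proof of Proposition~\ref{thm:errorbounds_r} (where Definition~\ref{def:epsilon_embedding} cannot be invoked directly for the pair $\bx,\by$) and that of Proposition~\ref{thm:errorbounds_r2} are really the same argument.
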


			It follows that the accuracy of $s^\star_r(\mu)$ can be controlled through the quality of $W_p$ for approximating $\mathcal{M}_r$, the quality of $\bTheta$ as an $U \to \ell_2$ $\varepsilon$-embedding, or both. {Note that choosing $\bTheta$ as a null matrix (i.e., an $\varepsilon$-embedding for $U$ with $\varepsilon =1$)} leads to a single first-phase approximation~\textup {(\ref {eq:approxquantity})}, while letting $W_p := \{ \bnull \}$ corresponds to a single sketched (second-phase) approximation. Such particular choices for $\bTheta$ or $W_p$ can be pertinent when the subspace $W_p$ is highly accurate so that there is practically no benefit to use a sketched correction or, the other way around, when the computational environment or the problem does not permit a sufficiently accurate approximation of $\mathcal{M}_r$ with $W_p$, therefore making the use of a non-zero $\bw_p(\mu)$ unjustified.
			
			\begin{remark}
				{When interpreting random sketching as a Monte Carlo method for the estimation of the inner product $\langle \bl(\mu), \bu_r(\mu) \rangle$, the proposed approach can be interpreted as  a control variate method where $\bw_p(\mu)$ plays the role of the control variate. A multileveled Monte Carlo method with different control variates should further improve the efficiency of post-processing.}
			\end{remark}
			
			\subsection{Construction of reduced subspaces} \label{effspaces}
			
			Further we address the problem of computing the basis vectors for $W_p$. In general, the strategy for obtaining $W_p$ has to be chosen according to the problem's structure and the constraints due to the computational environment.
			
			A simple way, used in~\cite{balabanov2019galerkin}, is to choose $W_p$ as the span of samples of $\bu(\mu)$ either chosen randomly or during the first few iterations of the reduced basis (or dictionary) generation with a greedy algorithm. Such $W_p$, however, may be too costly to operate with. Then we propose more sophisticated constructions of $W_p$.

			{\emph{Approximate Proper Orthogonal Decomposition.}}
			A subspace $W_p$ can be obtained by an (approximate) POD of the reduced vectors evaluated on a training set $\mathcal{P}_\mathrm{train} \subseteq \mathcal{P}$. Here, randomized linear algebra can be again employed for improving efficiency. The computational cost of the proposed POD procedure shall mainly consist of the solution of $m = \# \mathcal{P}_\mathrm{train}$ reduced problems and the multiplication of $\bU_r$ by $p=\mathrm{dim}(W_p) \ll r$ small vectors. Unlike the classical POD, our methodology does not require computation or maintenance of the full solution's samples and therefore allows large training sets.
			
			Let $L_{m} = \{ \ba_r (\mu^i)\}^m_{i=1}$ be a training sample of the coordinates of $\bu_r(\mu)$ in a basis $\bU_r$. We look for a POD subspace $W_r$ associated with the snapshot matrix $$\bW_{m}:=[ \bu_r (\mu^1), \bu_r (\mu^2), \hdots, \bu_r (\mu^m) ]= \bU_r  \bL_{m},$$
			where $\bL_{m}$ is a matrix whose columns are the elements from $L_{m}$. 
			
			An accurate estimation of POD can be efficiently computed via the sketched method of snapshots introduced in~\cite[Section 5.2]{balabanov2019galerkin}. More specifically, a quasi-optimal (with high probability) POD basis can be calculated as 
			
			\begin{equation} \label{eq:approxpod}
			\bW_p :=  \bU_r  \bT^*_p,
			\end{equation}
			where $$\bT^*_p:= \bL_m [\bt_1, \dots, \bt_p],$$ 
			with $\bt_1, \dots, \bt_p$ being the $p$ dominant singular vectors of $\bU^\bTheta_r \bL_m$.
			Note that {the} matrix $\bT^*_p$ can be efficiently obtained with a computational cost independent of the dimension of the full order model. The dominant cost is the multiplication of $\bU_r$ by $\bT^*_p$, which is also expected to be inexpensive since $\bT^*_p$ has a small number of columns. Guarantees for the quasi-optimality of $W_p$ can be readily derived from~\cite[Theorem 5.5]{balabanov2019galerkin}.
			
			{\emph{Sketched greedy algorithm.}}
			A greedy search over the training set $\{ \bu_r(\mu): \mu \in \mathcal{P}_{\mathrm{train}} \}$ of approximate solutions is another way to construct $W_p$. At the $i$-th iteration, $\bW_i$ is enriched with a vector $\bu_r (\mu^{i+1})$ with the largest distance to $W_i$ over the training set. Note that in this case the resulting matrix $\bW_p$ has the form~\textup {(\ref {eq:approxpod})}, where $\bT^*_p = [\ba_r(\mu^{1}), \dots, \ba_r(\mu^{p})]$.  The efficiency of the greedy selection can be improved by employing random sketching technique. At each iteration, the distance to $W_i$ can be measured with the sketched norm $\| \cdot \|^\bTheta_U$, which can be computed from sketches $\bTheta \bu_r(\mu) =  \bU^\bTheta_r \ba_r(\mu)$ of the approximate solutions with no need to operate with large matrix $\bU_r$ but only its sketch. This allows efficient computation of the quasi-optimal interpolation points $\mu^{1}, \dots, \mu^{p}$ and the associated matrix $\bT^*_p$. Note that for numerical stability an orthogonalization of $\bW_i$ with respect to $\langle \cdot, \cdot \rangle^\bTheta_U$ can be performed, that can be done by modifying $\bT^*_i$ so that $\bU^\bTheta_r \bT^*_i$ is an orthogonal matrix. Such $\bT^*_i$ can be obtained with standard QR factorization. When $\bT^*_p$ has been computed, the matrix $\bW_p$ can be calculated by multiplying $\bU_r$ with $\bT^*_p$. The quasi-optimality of  $\mu^{1}, \dots, \mu^{p}$ and approximate orthogonality of $\bW_p$ is guaranteed if $\bTheta$ is an $\varepsilon$-embedding for all subspaces from the set $\{ W_p + \mathrm{span}(\bu_r(\mu^i)) \}^m_{i=1}$. This property of $\bTheta$ can be guaranteed a priori by considering $(\varepsilon, \binom{m}{p}^{-1} \delta, p+1)$ oblivious $U \to \ell_2$ subspace embeddings, or certified a posteriori with the procedure explained in~Section\nobreakspace \ref {sketchcert}. 
			
			{\emph{Coarse approximation.}}
			Let us notice that the online cost of evaluating $s_r^\star(\mu)$ does not depend on the dimension $p$ of $W_p$. Consequently, if $W_p$ is spanned by structured (e.g., sparse) basis vectors then a rather high dimension is allowed (possibly larger than $r$). 
			
			{For classical numerical methods for PDEs, the resolution of the mesh (or grid) is usually chosen to guarantee both an approximability of the solution manifold by the approximation space and the stability. For many problems the latter factor is dominant and one choses the mesh primary to it. This is a typical situation for wave problems, advection-diffusion-reaction problems and many others. For these problems, the resolution of the mesh can be much higher than needed for the estimation of the quantity of interest from the given solution field. In these cases, the quantity of interest can be efficiently yet accurately approximated using a coarse-grid interpolation of the solution.} 
			
			Suppose that each vector $\bu \in U$ represents a function $u: \Omega \to \mathbb{K}$ in a finite-dimensional approximation space spanned by basis functions $\{ \psi_i(x) \}^{n}_{i=1}$ associated with a fine mesh of $\Omega$. The function $u(x)$ can be approximated by a projection on a coarse-grid approximation space spanned by basis functions $\{ \phi_i(x) \}^{p}_{i=1}$. For simplicity assume that each $\phi_i(x) \in \mathrm{span} \{ \psi_j(x) \}^{n}_{j=1}$. Then the $i$-th basis vector for $W_p$ can be obtained simply by evaluating the coordinates of $\phi_i(x)$ in the basis $\{ \psi_j(x) \}^{n}_{j=1}$. Note that for the classical finite element approximation, each basis function has a local support and the resulting matrix $\bW_p$ is sparse. 
			
			
			\subsection{Numerical validation}
			{The proposed methodology was validated experimentally on an invisibility cloak benchmark from~Section\nobreakspace \ref{cloak}.}
			
			{As in~Section\nobreakspace \ref{cloak} in this experiment we used an approximation space $U_r$ of dimension $r=150$ constructed with a greedy algorithm (based on sketched minres projection).  We used a training set of $50000$ uniform random samples  in $\mathcal{P}$, while the test set $\mathcal{P}_{test} \subset \mathcal{P}$ was taken as $1000$ uniform random samples in $\mathcal{P}$. } {The} experiment was performed for a fixed approximation $\bu_r(\mu)$ obtained with sketched minres projection on $U_r$ using $\bTheta$ with $1000$ rows. For such $\bu_r(\mu)$, {an} approximate extraction of the quantity $s_r(\mu) = l(\bu_r(\mu); \mu)$ from $\bu_r(\mu)$ (represented by coordinates in reduced basis) was considered.
			
			{The post-processing procedure was performed by choosing $\bl(\mu)$ and $\bu_r(\mu)$ in~\textup {(\ref {eq:quantity})} as $\bu_r(\mu) - \bu^{in}(\mu)$.} Furthermore, for better accuracy the solution space was here equipped with {(semi-)}inner product $\langle \cdot, \cdot \rangle_U := \langle \cdot, \cdot \rangle_{L^2(\Omega_1)}$ that is different from the inner product~\textup {(\ref {eq:ex1innerU})} considered for ensuring quasi-optimality of the minres projection and error estimation (see~Remark\nobreakspace \ref {rmk:semiinner2}).  
			For such choices of $\bl(\mu),~\bu_r(\mu)$ and $\langle \cdot, \cdot \rangle_U$, we employed a greedy search with error indicator $\min_{\bw \in W_i} \|\bu_r(\mu) - \bw \|^\bTheta_U$ over training set of $50000$ uniform samples in $\mathcal{P}$ to find $W_p$. Then $s_r(\mu)$ was efficiently approximated by $s^\star_r(\mu)$ given in~\textup {(\ref {eq:quantityapprox})}. In this experiment, the error is characterized by $e^\mathrm{s}_\mathcal{P}=\max_{\mu \in \mathcal{P}_{\mathrm{test}}} |s(\mu)-\tilde{s}_r(\mu)| / A_s$, where $\tilde{s}_r(\mu) = s_r(\mu)$ or $s^\star_r(\mu)$.
			The statistical properties of $e^\mathrm{s}_\mathcal{P}$ for each value of $k$ and $\dim(W_p)$ were obtained with $20$ realizations of $\bTheta$.
			Figure\nobreakspace \ref {fig:Ex1_2a} exhibits the dependence of $e^\mathrm{s}_\mathcal{P}$ on the size of $\bTheta$ with $W_p$ of dimension $\dim{(W_p)} =15$. Furthermore, in Figure\nobreakspace \ref {fig:Ex1_2b} we provide the maximum value $e^\mathrm{s}_\mathcal{P}$ from the computed samples for different sizes of $\bTheta$ and $W_p$.  The accuracy of $s^\star_r(\mu)$ can be controlled by the quality of $W_p$ for the approximation of $\bu_r(\mu)$ and the quality of $\langle \cdot, \cdot \rangle^\bTheta_U$ for the approximation of $\langle \cdot, \cdot \rangle_U$. When $W_p$ approximates well $\bu_r(\mu)$, one can use a random correction with $\bTheta$ of rather small size, while in the alternative scenario the usage of a large random sketch is required. In this experiment we see that the quality of the output is nearly preserved with high probability when using $W_p$ of dimension $\dim{(W_p)} =20$ and a sketch of size $k=1000$, or $W_p$ of dimension $\dim{(W_p)} =15$ and a sketch of size $k=10000$. For less accurate $W_p$, with $\dim{(W_p)} \leq 10$, the preservation of the quality of the output requires larger sketches of sizes $k \geq 30000$. For {optimizing} efficiency the dimension for $W_p$ and the size for $\bTheta$ should be picked depending on the dimensions $r$ and $n$ of $U_r$ and $U$, respectively, and the particular computational architecture. The increase of the considered dimension of $W_p$ entails storage and operation with more high-dimensional vectors, while the increase of the sketch entails higher computational cost associated with storage and operation with the sketched matrix $\bU_r^\bTheta = \bTheta \bU_r$. Let us finally note that for this benchmark the approximate extraction of the quantity of interest with {our approach}  using $\dim{(W_p)} =15$ and a sketch of size $k=10000$, required in about $10$ times less amount of storage and complexity than the classical exact extraction.  
			\begin{figure}[h!]
				\centering
				\begin{subfigure}[b]{.4\textwidth}
					\centering
					\includegraphics[width=\textwidth]{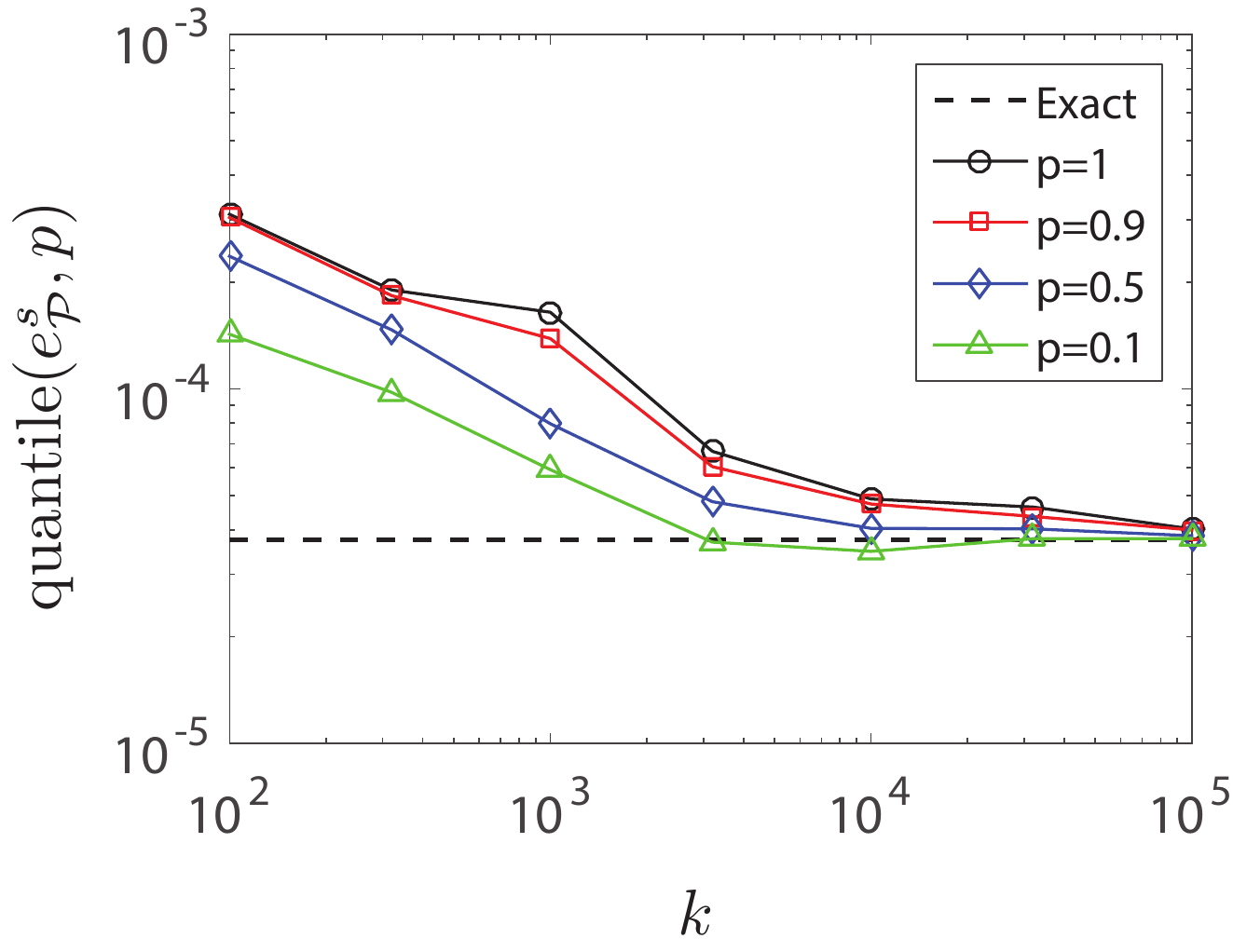}
					\caption{}
					\label{fig:Ex1_2a}
				\end{subfigure} \hspace{.01\textwidth}
				\begin{subfigure}[b]{.4\textwidth}
					\centering
					\includegraphics[width=\textwidth]{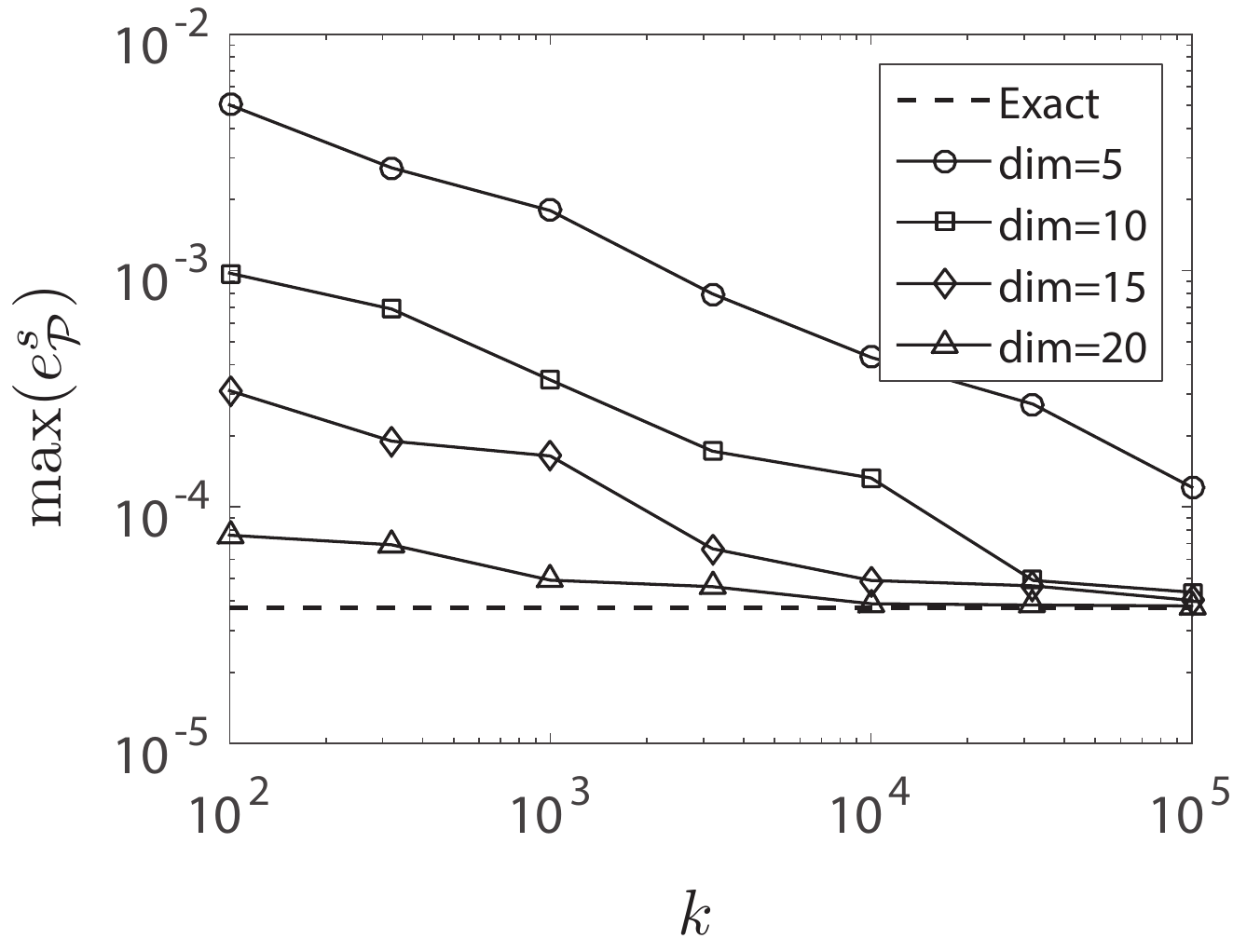}
					\caption{}
					\label{fig:Ex1_2b}
				\end{subfigure}	
				\caption{The error $e^\mathrm{s}_\mathcal{P}$ of $s_r(\mu)$ or its efficient approximation $s_r^\star(\mu)$ using $W_p$ and $\bTheta$ of varying sizes. (a) The error of $s_r(\mu)$ and quantiles of probabilities $p=1,0.9,0.5$ and $0.1$ over $20$ realizations of $e^\mathrm{s}_\mathcal{P}$ associated
					with $s_r^\star(\mu)$ using $W_p$ with $\dim{(W_p)}=10$, versus sketch's size $k$. (b) The error of $s_r(\mu)$ and maximum over $20$ realizations of $e^\mathrm{s}_\mathcal{P}$ associated
					with $s_r^\star(\mu)$, versus sketch's size $k$ for $W_p$ of varying dimension. }
				\label{fig:Ex1_2}
			\end{figure}
			
			\newpage
			\section*{Appendix B. Proofs of propositions.}
			
			\begin{proof} [Proof of Proposition\nobreakspace \ref{thm:cea}]
				The statement of the proposition follows directly from the definitions of the constants $\zeta_{r}(\mu)$ and $\iota_{r}(\mu)$, that imply 
				\begin{equation*}
				\zeta_{r}(\mu)\|\bu(\mu) - \bu_r(\mu) \|_U	\leq \|\br(\bu_r(\mu); \mu)\|_{U'} \leq \|\br(\bP_{U_r} \bu(\mu); \mu)\|_{U'} \leq \iota_{r}(\mu)\|\bu(\mu) - \bP_{U_r} \bu(\mu) \|_U.
				\end{equation*} 
				~
			\end{proof}

			\begin{proof} [Proof of Proposition\nobreakspace \ref{thm:skcea}]
				The proof follows the one of~Proposition\nobreakspace \ref {thm:cea}. 
			\end{proof}
			
			\begin{proof} [Proof of Proposition\nobreakspace \ref{thm:skminresopt}]
				{By the assumption on $\bTheta$,} we have that  $$\sqrt{1-\varepsilon}  \| \bA(\mu) \bx  \|_{U'} \leq \| \bA(\mu) \bx  \|^\bTheta_{U'} \leq \sqrt{1+\varepsilon}  \| \bA(\mu) \bx  \|_{U'} $$ holds for all $\bx \in \mathrm{span} \{ \bu(\mu) \}+ U_r$. Then~\textup {(\ref {eq:skalphabetabounds})} follows immediately. 
			\end{proof}
			
			\begin{proof} [Proof of Proposition\nobreakspace \ref{thm:skminresstab}]
				Let $\ba \in \mathbb{K}^{r}$ and $\bx:= \bU_r\ba$.	Then		
				\begin{align*}
				\frac{\| \bV^\bTheta_r(\mu) \ba \|}{\| \ba \|}&=  \frac{\| \bTheta\bR_U^{-1}\bA(\mu)\bU_r\ba\|}{ \| \ba \|} = \frac{\|\bA(\mu)\bx\|^\bTheta_{U'}}{ \| \bx \|^\bTheta_U}. 
				\end{align*}
				{Since $\bTheta$ is an $\varepsilon$-embedding for $U_r$, we have}
				\begin{equation*} 
				\sqrt{1-\varepsilon} \| \bx \|_U \leq \| \bx \|^{\bTheta}_U \leq  \sqrt{1+\varepsilon} \| \bx \|_U.
				\end{equation*}
				The statement of the proposition follows immediately. 
			\end{proof}

			\begin{proof} [Proof of Proposition\nobreakspace \ref{prop:widthsuper}]	
				Take arbitrary $\tau>0$, and let $\mathcal{D}^{(i)}_K$ be dictionaries with $\# \mathcal{D}^{(i)}_K = K_i$, such that
				$$ \sup_{\bu \in \mathcal{M}^{(i)}} \min_{W_{r_i} \in \mathcal{L}_{r_i}(\mathcal{D}^{(i)}_K)} \| \bu - \bP_{W_{r_i}} \bu\|_U \leq  \sigma_{r_i}(\mathcal{M}^{(i)}; K_i) + \tau,~~1 \leq i \leq l.   $$		
				The existence of $\mathcal{D}^{(i)}_K$ follows directly from the definition~\textup {(\ref {eq:drwidth})} of the dictionary-based width. 	
				Define
				$ {\mathcal{D}}^*_K = \bigcup^l_{i=1} \mathcal{D}^{(i)}_K.$ 			
				Then the following relations hold: 
				\begin{align*}
				\sum^l_{i=1} \sigma_{r_i}(\mathcal{M}^{(i)}; K_i) 
				&\geq \sum^l_{i=1} \left ( \sup_{\bu \in \mathcal{M}^{(i)}} \min_{W_{r_i} \in \mathcal{L}_{r_i}(\mathcal{D}^{(i)}_K)} \| \bu - \bP_{W_{r_i}} \bu\|_U - \tau \right) \\ 
				&\geq  \left (\sup_{\mu \in \mathcal{P}} \sum^l_{i=1} \min_{W_{r_i} \in \mathcal{L}_{r_i}(\mathcal{D}^{(i)}_K)} \| \bu^{(i)}(\mu) - \bP_{W_{r_i}} \bu^{(i)}(\mu)\|_U \right) - l \tau  \\ 
				& \geq \left ( \sup_{\mu \in \mathcal{P}}  \min_{W_{r} \in \mathcal{L}_{r}(\mathcal{D}^*_K)} \sum^l_{i=1} \| \bu^{(i)}(\mu) - \bP_{W_{r}} \bu^{(i)}(\mu)\|_U \right ) - l \tau \\
				& \geq  \sup_{\mu \in \mathcal{P}}  \min_{W_{r} \in  \mathcal{L}_{r}(\mathcal{D}^*_K)} \| \sum^l_{i=1} \bu^{(i)}(\mu) - \bP_{W_{r}} \sum^l_{i=1} \bu^{(i)}(\mu)\|_U - l \tau \geq \sigma_{{r}}({\mathcal{M}}; {K}) - l \tau.
				\end{align*} 
				Since the above relations hold for arbitrary positive $\tau$, we conclude that $$\sum^l_{i=1} \sigma_{r_i}(\mathcal{M}^{(i)}; K_i) \geq \sigma_{{r}}({\mathcal{M}}; {K}).$$
			\end{proof}

			\begin{proof} [Proof of Proposition\nobreakspace \ref{thm:Vmcea}]	
				Let $U^*_r(\mu) := \arg \min_{W_r \in \mathcal{L}_r(\mathcal{D}_K) } \| \bu(\mu) - \bP_{W_r} \bu(\mu)\|_U$. By definition of $\bu_r(\mu)$ and constants $\zeta_{r,K}(\mu)$ and $\iota_{r,K}(\mu)$,
				\begin{align*}
				{\zeta_{r,K}(\mu)\|\bu(\mu) - \bu_r(\mu) \|_U} 	& {\leq \|\br(\bu_r(\mu); \mu)\|_{U'} 
					\leq {D} \|\br(\bP_{U^*_r(\mu)} \bu(\mu); \mu)\|_{U'} + {\tau \|  \bb(\mu) \|_{U'}}} \\ &{\leq \iota_{r,K}(\mu) \left( {D} \|\bu(\mu) - \bP_{U^*_r(\mu)} \bu(\mu) \|_U + \tau \| \bu(\mu) \|_U  \right),}
				\end{align*}
				which ends the proof. 
			\end{proof}
			
			\begin{proof} [Proof of Proposition\nobreakspace \ref{thm:sksparsecea}]	
				The proof exactly follows the one of~Proposition\nobreakspace \ref {thm:Vmcea} by replacing $\| \cdot \|_{U'}$ with $\| \cdot \|^\bTheta_{U'}$. 
			\end{proof}
			
			\begin{proof} [Proof of Proposition\nobreakspace \ref{ssminresopt}]	
				We have that  $$\sqrt{1-\varepsilon}  \| \bA(\mu) \bx  \|_{U'} \leq \| \bA(\mu) \bx  \|^\bTheta_{U'} \leq \sqrt{1+\varepsilon}  \| \bA(\mu) \bx  \|_{U'} $$ 
				holds for all $\bx \in \mathrm{span} \{ \bu(\mu) \}+ W_r$ with $W_r \in \mathcal{L}_r(\mathcal{D}_K)$. The statement of the proposition then follows directly from the definitions of $\zeta_{r,K}(\mu), \iota_{r,K}(\mu), \zeta^\bTheta_{r,K}(\mu)$ and $\iota^\bTheta_{r,K}(\mu)$. 	
			\end{proof}
			
			\begin{proof} [Proof of Proposition\nobreakspace \ref{prop:stabalgsksparseminresproj}]
				{Let matrix $\bU_K$ have columns $\{\bw_i: i \in \{1 \cdots K \} \}$ and matrix $\bU_r(\mu)$ have columns  $\{\bw_i: i \in \mathcal{I}(\mu) \}$, with $\mathcal{I}(\mu) \subseteq \{1, \cdots,K\}$ being a subset of $r$ indices. 
					Let $\bx \in \mathbb{K}^{r}$ be an arbitrary vector. Define a sparse vector $\bz(\mu) = \left (z_i(\mu) \right )_{i \in \{1 \cdots K \}} $ with $\left (z_i(\mu) \right )_{i \in \mathcal{I}(\mu)} = \bx$ and zeros elsewhere.			 
					Let $\bw(\mu):= \bU_r(\mu)\bx= \bU_K\bz(\mu) $.}	Then		
				\begin{align*}
				\frac{\| \bV^\bTheta_r(\mu) \bx \|}{\| \bx \|}&=  \frac{\| \bTheta\bR_U^{-1}\bA(\mu)\bU_r(\mu)\bx\|}{ \| \bx \|} = \frac{\|\bA(\mu)\bw(\mu)\|^\bTheta_{U'}}{ \| \bx \|} =\frac{\|\bA(\mu)\bw(\mu)\|^\bTheta_{U'}}{ \| \bw(\mu) \|_U} \frac{\| \bw(\mu) \|_U}{\|\bx \|}   \\ 
				&\geq \zeta^\bTheta_{r,K}(\mu) \frac{\| \bU_r(\mu) \bx \|_U}{\|\bx \|} =
				\zeta^\bTheta_{r,K}(\mu) \frac{\| \bU_K \bz(\mu) \|_U}{\|\bz(\mu) \|} \geq \zeta^\bTheta_{r,K}(\mu) \Sigma^\mathrm{min}_{r,K}.
				\end{align*}
				Similarly, 
				\begin{align*}
				\frac{\| \bV^\bTheta_r(\mu) \bx \|}{\| \bx \|}  \leq \iota^\bTheta_{r,K}(\mu) \frac{\| \bU_r(\mu) \bx \|_U}{\|\bx \|} \leq \iota^\bTheta_{r,K}(\mu) \Sigma^\mathrm{max}_{r,K}.
				\end{align*}
				The statement of the proposition follows immediately. 
			\end{proof}

			\begin{proof} [Proof of Proposition\nobreakspace \ref{thm:certvectors}]	
				Using~Proposition\nobreakspace \ref {thm:errorbounds_r} with $\bl(\mu) := \bR_U \bx$, $\bu_r(\mu) := \by$, $\bw_p(\mu):=\bnull$, $\bTheta:=\bTheta^*$, $\varepsilon:= \varepsilon^*$ and $\delta:= \delta^*$, we have
				\begin{equation} \label{eq:certvectors1}
				\mathbb{P} (| \langle \bx, \by \rangle_{U} - \langle \bx, \by \rangle^{\bTheta^*}_{U} | \leq \varepsilon^* \| \bx \|_{U}  \| \by\|_{U}) \geq 1- 2\delta^*,
				\end{equation}
				from which we deduce that
				\begin{equation} \label{eq:certvectors2}
				\begin{split}
				| \langle \bx, \by \rangle^{\bTheta^*}_{U} - \langle \bx, \by \rangle^{\bTheta}_{U} | - {\varepsilon^*} \| \bx \|_{U}  \| \by\|_{U}  
				&\leq {| \langle \bx, \by \rangle_{U} - \langle \bx, \by \rangle^{\bTheta}_{U}|} \\ 
				&\leq | \langle \bx, \by \rangle^{\bTheta^*}_{U} - \langle \bx, \by \rangle^{\bTheta}_{U} | + {\varepsilon^*} \| \bx \|_{U}  \| \by\|_{U}
				\end{split}
				\end{equation}
				holds with probability at least $1- 2\delta^*$.
				In addition,
				\begin{equation} \label{eq:certvectors3}
				\mathbb{P} (| \| \bx \|^2_{U} -  ( \| \bx \|^{\bTheta^*}_{U}  )^2| \leq \varepsilon^* \| \bx \|^2_{U}) \geq 1-\delta^*
				\end{equation}
				and
				\begin{equation} \label{eq:certvectors4}
				\mathbb{P} (| \| \by \|^2_{U} -  ( \| \by \|^{\bTheta^*}_{U}  )^2| \leq \varepsilon^* \| \by \|^2_{U}) \geq 1-\delta^*.
				\end{equation}
				The statement of the proposition can be now derived by combining~ \textup {(\ref {eq:certvectors2})} to\nobreakspace  \textup {(\ref {eq:certvectors4})}  and using a union bound argument. 
			\end{proof}
			
			\begin{proof} [Proof of Proposition\nobreakspace \ref{thm:omegaUB}]	
				Observe that 
				$$\omega = \max \left \{ 1- \min_{\bx \in V / \{ \bnull \}} \left (\frac{\| \bx\|^{\bTheta}_U}{\| \bx\|_U} \right )^2, \max_{\bx \in V / \{ \bnull \}} \left (\frac{\| \bx\|^{\bTheta}_U}{\| \bx\|_U}\right )^2 -1  \right \}.$$
				Let us make the following assumption: $$   1- \min_{\bx \in V / \{ \bnull \}} \left (\frac{\| \bx\|^{\bTheta}_U}{\| \bx\|_U} \right )^2 \geq 	\max_{\bx \in V / \{ \bnull \}} \left (\frac{\| \bx\|^{\bTheta}_U}{\| \bx\|_U}\right )^2 -1. $$ For the alternative case the proof is similar. 
				
				Next, we show that $\bar{\omega}$ is an upper bound for $\omega$ with probability at least $1-\delta^*$. Define $\bx^*:= \arg \min_{\bx \in V / \{ \bnull \}, \| \bx \|_U=1} \| \bx\|^{\bTheta}_U$. By definition of $\bTheta^*$, 
				\begin{equation} \label{eq:obldef}
				1-\varepsilon^* \leq \left( \|\bx^*\|^{\bTheta^*}_{U} \right )^2
				\end{equation} 
				holds with probability at least $1-\delta^*$. If~\textup {(\ref {eq:obldef})} is satisfied, we have 
				$$\bar{\omega} \geq 1- (1-\varepsilon^*) \min_{\bx \in V / \{ \bnull \}} \left (\frac{\| \bx\|^{\bTheta}_U}{\| \bx\|^{\bTheta^*}_U} \right )^2 \geq  1- (1-\varepsilon^*) \left (\frac{\| \bx^*\|^{\bTheta}_U}{\| \bx^*\|^{\bTheta^*}_U} \right )^2 \geq 1 - (\| \bx^*\|^{\bTheta}_U)^2 = \omega.$$ 
				\quad 
			\end{proof}
			
			\begin{proof} [Proof of Proposition\nobreakspace \ref{thm:omegaUBoptimality}]	
				{By definition of $\omega$ and the assumption on $\bTheta^*$}, for all $\bx \in V$, it holds 
				$$ | \|\bx \|^2_{U} - ( \|\bx\|^{\bTheta^*}_{U}  )^2| \leq \omega^*\|\bx \|^2_{U},~\textup{ and } ~| \|\bx \|^2_{U} - ( \|\bx\|^{\bTheta}_{U}  )^2| \leq \omega\|\bx \|^2_{U}.$$
				The above relations and the definition~\textup {(\ref {eq:omegaUB})} of $\bar{\omega}$ yield
				\begin{equation*} 
				\bar{\omega} \leq \max \left \{ 1- (1-\varepsilon^*)\frac{1-\omega}{1+\omega^*}, (1+\varepsilon^*)\frac{1+\omega}{1-\omega^*} -1  \right \} = (1+\varepsilon^*)\frac{1+\omega}{1-\omega^*} -1,
				\end{equation*} 
				which ends the proof. 
			\end{proof}
			\begin{proof} [Proof of Proposition\nobreakspace \ref{thm:errorbounds_r}]					
			Denote $\bx := \bR^{-1}_U \bl (\mu) / \| \bl(\mu) \|_{U'}$ and $\by :=  (\bu_r(\mu) - \bw_p(\mu))/\|\bu_r(\mu) - \bw_p(\mu)\|_{U} $.
			Let us consider $\mathbb{K} = \mathbb{C}$, {which also accounts for the real case, $\mathbb{K} = \mathbb{R}$.} 
			Let  $$\omega:= \frac{ \langle \bx , \by \rangle_{U} - \langle \bx , \by \rangle^\bTheta_{U}}{|\langle \bx , \by \rangle_{U} - \langle \bx , \by \rangle^\bTheta_{U}|}. $$ Observe that $|\omega|=1$ and $\langle \bx , \omega \by \rangle_{U} - \langle \bx , \omega \by \rangle^\bTheta_{U}$ is a real number.
		
			By a union bound for the probability of success,  $\bTheta$ is an $\varepsilon$-embedding for $\mathrm{span}(\bx+\omega\by)$ and $\mathrm{span}(\bx-\omega\by)$ with probability at least $1-2\delta$. Then, using the parallelogram identity we obtain
			\begin{align*}
			4|\langle \bx , \by \rangle_{U} - \langle \bx , \by \rangle^\bTheta_{U}| &=|4\langle \bx , \omega \by \rangle_{U} - 4\langle \bx , \omega \by \rangle^\bTheta_{U}| \\
			& = | \| \bx +\omega \by \|_{U}^2 - \| \bx - \omega \by \|_{U}^2 +  4\mathrm{Im}(\langle \bx , \omega \by \rangle_{U}) \\
			&- \left( (\| \bx +\omega \by \|^\bTheta_{U})^2 - (\| \bx - \omega \by \|^\bTheta_{U})^2 + 4 \mathrm{Im}(\langle \bx , \omega \by \rangle^\bTheta_{U}) \right )| \\
			&= | \| \bx +\omega \by \|_{U}^2 -  (\| \bx +\omega \by \|^\bTheta_{U})^2  - \left( \| \bx - \omega \by \|_{U}^2 - (\| \bx - \omega \by \|^\bTheta_{U})^2 \right) \\
			&- 4\mathrm{Im}(\langle \bx , \omega \by \rangle_{U} - \langle \bx , \omega \by \rangle^\bTheta_{U}) |\\
			&\leq  \varepsilon \| \bx +\omega \by \|_{U}^2 + \varepsilon \| \bx - \omega \by \|_{U}^2 = 4 \varepsilon.
			\end{align*} 
			We conclude that relation~\textup {(\ref {eq:quantitybound})} holds with probability at least $1-2\delta$. 		
			\end{proof}

			\begin{proof} [Proof of Proposition\nobreakspace \ref{thm:errorbounds_r2}]					
			We can use a similar proof as in~Proposition\nobreakspace \ref {thm:errorbounds_r} with the fact that if $\bTheta$ is an $\varepsilon$-embedding for {$L+U_r +W_p$}, then it satisfies the $\varepsilon$-embedding property for  $\mathrm{span}(\bx+\omega\by)$ and $\mathrm{span}(\bx-\omega\by)$.  
			\end{proof}	

\begin{thebibliography}{10}
				
				
				
				
				\bibitem{amsallem2013}
				D.~Amsallem, C.~Farhat, and M.~Zahr.
				\newblock On the robustness of residual minimization for constructing pod-based
				reduced-order cfd models.
				\newblock {\em 21st AIAA Computational Fluid Dynamics Conference}, 2013.
				
				\bibitem{amsallem2016pebl}
				D.~Amsallem and B.~Haasdonk.
				\newblock Pebl-rom: Projection-error based local reduced-order models.
				\newblock {\em Advanced Modeling and Simulation in Engineering Sciences},
				3(1):6, 2016.
				
				\bibitem{balabanov2019galerkin}
				O.~Balabanov and A.~Nouy.
				\newblock Randomized linear algebra for model reduction. {P}art {I}: Galerkin methods and error estimation.
				{\newblock{ \em Advances in Computational Mathematics} 45:2969–3019, 2019.}
				
				
				\bibitem{barrault2004empirical}
				M. Barrault, Y. Maday, N. C. Nguyen, and A. T. Patera. 		     
				\newblock An empirical interpolation method: application to efficient reduced-basis discretization of partial differential equations. 		     
				\newblock {\em Comptes Rendus Math{\'e}matique}, 339(9):667--672, 2004.
				
				
				\bibitem{morbook2017}
				P.~Benner, A.~Cohen, M.~Ohlberger, and K.~Willcox, editors.
				\newblock {\em Model Reduction and Approximation: Theory and Algorithms}.
				\newblock SIAM, Philadelphia, PA, 2017.
				
				\bibitem{bertoldi2017flexible}
				K.~Bertoldi, V.~Vitelli, J.~Christensen, and M.~van Hecke.
				\newblock Flexible mechanical metamaterials.
				\newblock {\em Nature Reviews Materials}, 2(11):17066, 2017.
				
				\bibitem{bistrian2017randomized}
				D.~A. Bistrian and I.~M. Navon.
				\newblock Randomized dynamic mode decomposition for nonintrusive reduced order
				modelling.
				\newblock {\em International Journal for Numerical Methods in Engineering},
				112(1):3--25, 2017.
				
				\bibitem{brunton2015}
				S.~L. Brunton, J.~L. Proctor, J.~H. Tu, and J.~N. Kutz.
				\newblock Compressed sensing and dynamic mode decomposition.
				\newblock {\em Journal of Computational Dynamics}, 2:165, 2015.
				
				\bibitem{buhr2017randomized}
				A.~Buhr and K.~Smetana.
				\newblock Randomized local model order reduction.
				\newblock {\em SIAM Journal on Scientific Computing}, 40:2120--2151, 2018.
				
				\bibitem{bui2008model}
				T.~Bui-Thanh, K.~Willcox, and O.~Ghattas.
				\newblock Model reduction for large-scale systems with high-dimensional
				parametric input space.
				\newblock {\em SIAM {J}ournal on Scientific Computing}, 30(6):3270--3288, 2008.
				
				
				
				\bibitem{cao2004}
				Y.~Cao and L.~Petzold. 
				\newblock A posteriori error estimation and global error control for ordinary differential equations by the adjoint method.
				\newblock {\em SIAM {J}ournal on Scientific Computing},  26(2):359--374, 2004. 
				
				\bibitem{carlberg2013}
				K.~Carlberg, L.~Farhat, J.,~Cortial, and D.,~Amsallem.
				\newblock The GNAT method for nonlinear model reduction: effective implementation and application to computational fluid dynamics and turbulent flows.
				\newblock {\em Journal of Computational Physics},  242:623--647, 2013. 
				
				
				
				\bibitem{chen2010acoustic}
				H.~Chen and C.~T. Chan.
				\newblock Acoustic cloaking and transformation acoustics.
				\newblock {\em Journal of Physics D: Applied Physics}, 43(11):113001, 2010.
				
				{
					\bibitem{CHEN20081295}
					Y.~Chen, J.~S.~Hesthaven, Y.~Maday, and J.~Rodr{\'i}guez.
					\newblock A monotonic evaluation of lower bounds for inf-sup stability constants in the frame of reduced basis approximations.
					\newblock {\em Comptes Rendus Mathematique},
					346(23):1295 -- 1300, 2008.}
				
				{
					\bibitem{CHEN2009}
					Y.~Chen, J.~S.~Hesthaven, Y.~Maday, and J.~Rodr{\'i}guez.
					\newblock Improved successive constraint method based a posteriori error estimate for reduced basis approximation of 2D Maxwell's problem. 
					\newblock {\em ESAIM: Mathematical Modelling and Numerical Analysis}, 43(6):1099--1116, 2009.}
				
				
				\bibitem{cheng2008multilayer}
				Y.~Cheng, F.~Yang, J.~Y. Xu, and X.~J. Liu.
				\newblock A multilayer structured acoustic cloak with homogeneous isotropic
				materials.
				\newblock {\em Applied Physics Letters}, 92(15):151913, 2008.
				
				\bibitem{devore2009nonlinear}
				R.~A. DeVore.
				\newblock Nonlinear approximation and its applications.
				\newblock In {\em Multiscale, Nonlinear and Adaptive Approximation}, pages
				169--201. Springer, 2009.
				
				\bibitem{dihlmann2012online}
				M.~Dihlmann, S.~Kaulmann, and B.~Haasdonk.
				\newblock Online reduced basis construction procedure for model reduction of
				parametrized evolution systems.
				\newblock {\em IFAC Proceedings Volumes}, 45(2):112--117, 2012.
				
				\bibitem{eftang2011hp}
				J.~L. Eftang, D.~J. Knezevic, and A.~T. Patera.
				\newblock An hp certified reduced basis method for parametrized parabolic
				partial differential equations.
				\newblock {\em Mathematical and Computer Modelling of Dynamical Systems},
				17(4):395--422, 2011.
				
				\bibitem{eftang2010hp}
				J.~L. Eftang, A.~T. Patera, and E.~M. R{\o}nquist.
				\newblock An "hp" certified reduced basis method for parametrized elliptic
				partial differential equations.
				\newblock {\em SIAM Journal on Scientific Computing}, 32(6):3170--3200, 2010.
				
				\bibitem{erichson2016randomized}
				N.~B. Erichson and C.~Donovan.
				\newblock Randomized low-rank dynamic mode decomposition for motion detection.
				\newblock {\em Computer Vision and Image Understanding}, 146:40--50, 2016.
				
				\bibitem{erichson2017randomized}
				N.~B. Erichson, L.~Mathelin, S.~L. Brunton, and J.~N. Kutz.
				\newblock Randomized dynamic mode decomposition.
				{\newblock {\em SIAM Journal on Applied Dynamical Systems}, 18(4):1867--1891, 2019.}
				
				\bibitem{haasdonk2017reduced}
				B.~Haasdonk.
				\newblock Reduced basis methods for parametrized {PDE}s -- {A} tutorial
				introduction for stationary and instationary problems.
				\newblock {\em Model reduction and approximation: theory and algorithms},
				15:65, 2017.
				
				\bibitem{halko2011finding}
				N.~Halko, P.-G. Martinsson, and J.~A. Tropp.
				\newblock Finding structure with randomness: Probabilistic algorithms for
				constructing approximate matrix decompositions.
				\newblock {\em SIAM review}, 53(2):217--288, 2011.
				
				\bibitem{hochman2014reduced}
				A.~Hochman, J.~F. Villena, A.~G. Polimeridis, L.~M. Silveira, J.~K. White, and
				L.~Daniel.
				\newblock Reduced-order models for electromagnetic scattering problems.
				\newblock {\em IEEE Transactions on Antennas and Propagation},
				62(6):3150--3162, 2014.
				
				
				\bibitem{homescu2005error}
				C.~Homescu, L.~R. Petzold, and R.~Serban.
				\newblock Error estimation for reduced-order models of dynamical systems.
				\newblock {\em SIAM Journal on Numerical Analysis}, 43(4):1693--1714, 2005.
				
				\bibitem{huynh2007successive}
				D.~B.~P. Huynh, G.~Rozza, S.~Sen, and A.~T. Patera.
				\newblock A successive constraint linear optimization method for lower bounds
				of parametric coercivity and inf--sup stability constants.
				\newblock {\em Comptes Rendus Math{\'e}matique}, 345(8):473--478, 2007.
				
				{
					\bibitem{HUYNH20101963}
					D.~B.~P.~Huynh, D.~J.~Knezevic, Y.~Chen, J.~S.~Hesthaven and A.~T.~Patera.
					\newblock A natural-norm Successive Constraint Method for inf-sup lower bounds.
					\newblock {\em Computer Methods in Applied Mechanics and Engineering},
					199(29):1963 -- 1975, 2010.}
				
				\bibitem{kadic2014pentamode}
				M.~Kadic, T.~B{\"u}ckmann, R.~Schittny, P.~Gumbsch, and M.~Wegener.
				\newblock Pentamode metamaterials with independently tailored bulk modulus and
				mass density.
				\newblock {\em Physical Review Applied}, 2(5):054007, 2014.
				
				\bibitem{kaulmann2013online}
				S.~Kaulmann and B.~Haasdonk.
				\newblock Online greedy reduced basis construction using dictionaries.
				\newblock In {\em VI International Conference on Adaptive Modeling and
					Simulation (ADMOS 2013)}, pages 365--376, 2013.
				
				\bibitem{kramer2017sparse}
				B.~Kramer, P.~Grover, P.~Boufounos, S.~Nabi, and M.~Benosman.
				\newblock Sparse sensing and dmd-based identification of flow regimes and
				bifurcations in complex flows.
				\newblock {\em SIAM Journal on Applied Dynamical Systems}, 16(2):1164--1196,
				2017.
				
				\bibitem{le2016flexible}
				L.~Le~Magoarou and R.~Gribonval.
				\newblock Flexible multilayer sparse approximations of matrices and
				applications.
				\newblock {\em IEEE Journal of Selected Topics in Signal Processing},
				10(4):688--700, 2016.
				
				\bibitem{maday2007general}
				Y.~Maday, N.~C. Nguyen, A.~T. Patera, and S.~H. Pau.
				\newblock A general multipurpose interpolation procedure: the magic points.
				\newblock {\em Communications on Pure \& Applied Analysis}, 8(1):383, 2009.
				
				\bibitem{maday2013locally}
				Y.~Maday and B.~Stamm.
				\newblock Locally adaptive greedy approximations for anisotropic parameter
				reduced basis spaces.
				\newblock {\em SIAM Journal on Scientific Computing}, 35(6):2417--2441, 2013.
				
				\bibitem{mahoney2011randomized}
				M.~W. Mahoney et~al.
				\newblock Randomized algorithms for matrices and data.
				\newblock {\em Foundations and Trends{\textregistered} in Machine Learning},
				3(2):123--224, 2011.
				
				\bibitem{rokhlin2008fast}
				V.~Rokhlin and M.~Tygert.
				\newblock A fast randomized algorithm for overdetermined linear least-squares
				regression.
				\newblock {\em Proceedings of the National Academy of Sciences},
				105(36):13212--13217, 2008.
				
				{
					\bibitem{Rozza2007}
					G.~Rozza, D.~B.~P.~Huynh, A.~T.~Patera. 
					\newblock Reduced basis approximation and a posteriori error estimation for affinely parametrized elliptic coercive partial differential equations. 
					\newblock {\em Archives of Computational Methods in Engineering}, 15(3):1, 2007.}
				
				\bibitem{rubinstein2009double}
				R.~Rubinstein, M.~Zibulevsky, and M.~Elad.
				\newblock Double sparsity: Learning sparse dictionaries for sparse signal
				approximation.
				\newblock {\em IEEE Transactions on signal processing}, 58(3):1553--1564, 2009.
				
				\bibitem{smetana2018randomized}
				K.~Smetana, O.~Zahm, and A.~T. Patera.
				\newblock Randomized residual-based error estimators for parametrized
				equations.
				{\newblock {\em SIAM journal on scientific computing}, 41(2):900--926, 2019.}
				
				\bibitem{taddei2019}
				T.~Taddei.
				\newblock An offline/online procedure for dual norm calculations of parameterized functionals: empirical quadrature and empirical test spaces.			
				\newblock{ \em Advances in Computational Mathematics} 45(5-6):2429--2462, 2019.
				
				
				\bibitem{temlyakov1998nonlinear}
				V.~N. Temlyakov.
				\newblock Nonlinear {K}olmogorov widths.
				\newblock {\em Mathematical Notes}, 63(6):785--795, 1998.
				
				\bibitem{tropp2007signal}
				J.~A. Tropp and A.~C. Gilbert.
				\newblock Signal recovery from random measurements via orthogonal matching
				pursuit.
				\newblock {\em IEEE Transactions on information theory}, 53(12):4655--4666,
				2007.
				
				\bibitem{woodruff2014sketching}
				D.~P. Woodruff et~al.
				\newblock Sketching as a tool for numerical linear algebra.
				\newblock {\em Foundations and Trends{\textregistered} in Theoretical Computer
					Science}, 10(1--2):1--157, 2014.
				
				\bibitem{zahm2016interpolation}
				O.~Zahm and A.~Nouy.
				\newblock Interpolation of inverse operators for preconditioning
				parameter-dependent equations.
				\newblock {\em SIAM Journal on Scientific Computing}, 38(2):1044--1074, 2016.
				
			\end{thebibliography}
		\end{document}